\newcommand{\R}{\mathbb{R}}
\newcommand{\N}{\mathbb{N}}
\newcommand{\ra}{\rightarrow}
\newcommand{\wc}{\rightsquigarrow}
\renewcommand{\d}{\,\mathrm{d}}
\newcommand{\lp}{\left(}
\newcommand{\rp}{\right)}
\newcommand{\lc}{\left[}
\newcommand{\rc}{\right]}
\newcommand{\lacc}{\left\{}
\newcommand{\racc}{\right\}}
\newcommand{\labs}{\left|}
\newcommand{\rabs}{\right|}
\DeclareMathOperator{\oh}{o}
\DeclareMathOperator{\Oh}{O}
\DeclareMathOperator{\EE}{\mathsf{E}}
\DeclareMathOperator{\Var}{\mathsf{Var}}
\renewcommand{\P}{\operatorname{P}}
\newcommand{\PP}{\mathsf{P}}
\newcommand{\1}{\mathbb{I}}
\renewcommand{\mod}{\mathcal{M}}
\newcommand{\param}{\mathcal{R}}
\newcommand{\paramap}{\mathfrak{R}}
\newcommand{\cF}{\mathcal{F}}
\newcommand{\hPhi}{\widehat{\Phi}}
\newcommand{\hQ}{\widehat{Q}}
\newcommand{\tQ}{\widetilde{Q}}
\newcommand{\hF}{\widehat{F}}
\newcommand{\htheta}{\widehat{\theta}}
\definecolor{darkteal}{rgb}{0, 0.35, 0.35}
\definecolor{cerulean}{rgb}{0.0, 0.48, 0.65}
\newcommand{\est}{\widehat{r}_n}
\newcommand{\spacerv}{\mathbb{E}}
\newcommand{\weight}{q}
\newcommand{\dotp}{\boldsymbol{\cdot}}
\newcommand{\ts}{T_n}
\newcommand{\quantile}{\mathcal{Q}}
\newcommand{\egdef}{\stackrel{\mathrm{def}}{=}}
\newcommand{\weightt}{\eta}
\newcommand{\eps}{\varepsilon}
\renewcommand{\epsilon}{\eps}
\newcommand{\tW}{\widetilde{W}}
\theoremstyle{plain}
\newtheorem{theorem}{Theorem}
\newtheorem{corollary}{Corollary}
\newtheorem{lemma}{Lemma}
\newtheorem{assumption}{Assumption}
\theoremstyle{remark}
\newtheorem{remark}{Remark}
\newcommand{\bF}{\overline{F}}
\newcommand{\bhF}{\widehat{\overline{F}}}
\title{Testing parametric models for the angular measure\\
for bivariate extremes}
\author[1]{Stéphane Lhaut}
\author[1,2]{Johan Segers}
\affil[1]{\small UCLouvain, LIDAM/ISBA, Voie du Roman Pays 20, 1348 Louvain-la-Neuve, Belgium.
	E-mail: \href{mailto:stephane.lhaut@uclouvain.be}{stephane.lhaut@uclouvain.be}}
\affil[2]{\small KU Leuven, Department of Mathematics, Celestijnenlaan 200B, 3001 Leuven, Belgium. E-mail: \href{mailto:jjjsegers@kuleuven.be}{jjjsegers@kuleuven.be}}
\date{\today}
\begin{document}
	
\maketitle

\begin{abstract}
The angular measure on the unit sphere characterizes the first-order dependence structure of the components of a random vector in extreme regions and is defined in terms of standardized margins. Its statistical recovery is an important step in learning problems involving observations far away from the center. In this paper, we test the goodness-of-fit of a given parametric model to the extremal dependence structure of a bivariate random sample. The proposed test statistic consists of a weighted $L_1$-Wasserstein distance between a nonparametric, rank-based estimator of the true angular measure obtained by maximizing a Euclidean likelihood on the one hand, and a parametric estimator of the angular measure on the other hand. The asymptotic distribution of the test statistic under the null hypothesis is derived and is used to obtain critical values for the proposed testing procedure via a parametric bootstrap. Consistency of the bootstrap algorithm is proved. A simulation study illustrates the finite-sample performance of the test for the logistic and Hüsler--Reiss models. We apply the method to test for the Hüsler--Reiss model in the context of river discharge data.
\end{abstract}

\section{Introduction}
\label{sec:intro}

In various domains, such as meteorology, finance or engineering, multivariate extreme events may induce important perturbations of the system of interest, so that the occurrence probabilities of such events are of main concern. 
Dangerous combinations of wave heights and still water levels could lead to the collapse of a dike at a certain point at the coast~\cite{dehaan1998}.
The financial portfolio selection problem involves maximizing the return on investment subject to risk constraints; if those constraints are expressed in terms of quantities such as the Value-at-Risk or the Expected Shortfall, tail quantiles of the portfolio need to be estimated accurately~\cite{rocco2014}.
Floating ystems in an offshore environment are submitted to natural forces such as wind, waves and currents, and may be sensible to certain combinations of large values of these forces, the probabilities of which need to be assessed~\cite{morton1996}.

Extreme value theory~\cite{dHF2006, resnick1987} provides a convenient and solid mathematical framework to answer those questions. In the multivariate case, the classical assumption is that the random vector of interest $X = (X_1,\ldots,X_d)$, with values in $\R^d$, belongs to the \emph{maximal domain of attraction} of a multivariate extreme value distribution. This hypothesis comprises two parts:
\begin{enumerate}
	\item the marginal distributions of $X$ belong to the maximal domains of attraction of some univariate extreme value distributions;
	\item after marginal transformation, through the probability integral transform or a variation thereof, the joint distribution of the transformed vector belongs to the maximal domain of attraction of a multivariate extreme value distribution with pre-specified margins.
\end{enumerate}
Under the side assumption that the marginal cumulative distribution functions of $X$ are continuous, which we make in this paper, point~2 above only involves the copula of $X$ in view of Sklar's theorem. Point~2 can be imposed on its own, that is, independently of the assumptions on the margins in point~1, and this is what we will do in this paper.

\paragraph*{Angular measure.}
The multivariate extreme value distribution to which the distribution in point~2 is attracted is fully determined by an \emph{angular measure}, denoted here by $\Phi$, which is a measure on the intersection between the positive orthant $[0,\infty)^d$ with the unit sphere with respect to a given norm. This measure, originally called spectral measure in~\cite{dehaan1977}, describes the first-order dependence structure of joint extremes of $X$ and is rooted in the theory of multivariate regular variation~\cite[Chapter~5]{resnick1987}.

Inference on the angular measure plays a major role in various problems related to extremes. Without being exhaustive, we mention the following examples: estimating the probability of a failure set~\cite{dehaan1998,dehaan1999}, estimating extreme quantile regions~\cite{einmahl2013}, anomaly detection~\cite{goix2017}, approximating conditional densities of an element of a random vector given that the others are large~\cite{cooley2012}, and binary classification in extreme regions~\cite{jalalzai2018}.

For a given dimension~$d$, the collection of all angular measures does not form a parametric family. Hence, a natural way to reduce the complexity of the inference problem is to \emph{assume} a parametric form for $\Phi$. Many models have been proposed, sometimes expressed in terms of other objects, such as the stable tail dependence function: the original bivariate logistic model~\cite{gumbel1960}, the asymmetric bivariate logistic model~\cite{tawn1988}, the tilted Dirichlet model~\cite{coles1991}, the pairwise beta model~\cite{cooley2010}, the Hüsler--Reiss model~\cite{husler1989, engelke2015}, among others.
For conclusions within the proposed models to be reliable, the parametric assumption has to be tested itself. 

\paragraph*{Testing parametric models for extremal dependence.}
Goodness-of-fit tests for parametric models of the tail dependence structure have already been proposed in the literature.
A test based on a weighted $L_2$-distance between the tail empirical copula and the estimated copula under the postulated model with estimation of the parameter performed by censored likelihood maximization and critical values computed on the basis of parametric bootstrap is studied in~\cite{dehaan2008}.
In a similar way, the limiting distribution of a test statistic consisting of a $L_2$-distance between the empirical stable tail dependence function and a parametric estimator with parameter estimation performed by the method of moments is derived in \cite{einmahl2008}; computation of critical values is not considered, however.
Another $L_2$-distance type of statistic based on the tail copula is considered in~\cite{bucher2013} with parameter under the null estimated via a minimum distance method and critical values computed on the basis of a multiplier bootstrap.
A different approach is proposed in~\cite{can2015} where authors develop asymptotically distribution-free tests based on various distances between a semi-parametric estimator of the tail copula and the estimated tail copula under the postulated model. Critical values are computed by simulating directly from limit law of the test statistic, which is distribution-free, that is, does not depend on unknown quantities. The computation of the test statistic itself is more involved.

In this paper, we propose a new test based on the angular measure directly. Our nonparametric estimator is a variation on the empirical angular measure, on which certain moment constraints are enforced by means of a Euclidean empirical likelihood in the spirit of \cite{decarvalho2013}, an idea going back to \cite{owen1991empirical}. The discrepancy between this nonparametric estimator and the one estimated within the parametric model is quantified by a weighted version of the $L_1$-\emph{Wasserstein distance} coming from optimal transport~\cite{Villani2003, Villani2009}. The Wasserstein distance has a long history in hypothesis testing in the non-extreme setting, especially in the one-dimensional situation where it admits an explicit representation, see, e.g., the survey paper~\cite{panaretos2019} for a review of its main statistical applications.
Working with bivariate extremes, the angular measure is defined on a one-dimensional space, so that we may use the explicit formula to compute our test statistic. We derive its limiting distribution under the null hypothesis and propose a consistent method to estimate the associated critical values. 

We only consider the bivariate case, the extension to higher dimensions being far from trivial, because, besides consistency \cite{einmahl2001} and \cite{janssen2020kmeans}, even the limit distribution of the empirical angular measure is not known in higher dimension. In the application in Section~\ref{sec:application}, we apply the test to several pairs of variables at once, correcting the $p$-values in order to account for the multiple hypothesis problem.

\paragraph*{Outline and reading guide.}
In Section~\ref{sec:rv}, we describe the underlying mathematical framework and introduce the maximum empirical Euclidean estimator of the angular measure, including its asymptotic expansion and distribution in Corollary~\ref{cor:asymptoticsMEL}. The goodness-of-fit test is formalized in Section~\ref{sec:GoF} together with our two main results: Theorem~\ref{thm:asymptoticTS} states the 
asymptotic distribution of the test statistic under general conditions on the estimator of the unknown model parameters, and Theorem~\ref{thm:cont_quantiles} ensures that the critical values can be estimated consistently from the asymptotic distribution of the test statistic at the estimated parameter values.
The finite-sample performance of the novel procedure is evaluated in Section~\ref{sec:simulations} for the logistic and the Hüsler--Reiss models, with favorable comparisons to the methods proposed in \cite{dehaan2008} and \cite{can2015}. Finally, the goodness-of-fit of the Hüsler--Reiss model is tested for pairwise extremes of river discharge data in the Danube network in Section~\ref{sec:application}. Section~\ref{sec:conclusion} concludes the paper. All proofs are relegated to the appendices.

As the theoretical and methodological set-up is a bit complex, we offer here some pointers to assist reading Sections~\ref{sec:rv} and~\ref{sec:GoF}:
\begin{itemize}
	\item The normalized angular probability measure $Q_p$ is defined in~\eqref{eq:angular_prob_meas} and its maximum empirical Euclidean estimator $\tQ_p$ in~\eqref{eq:max_emp_lik_est} in Section~\ref{sec:rv:mele}.
	\item The statistic $\ts$ for testing the goodness-of-fit of a parametric extremal dependence model is defined in \eqref{eq:test_statistics} in Section~\ref{sec:GoF:ts}.
	\item The asymptotic null distribution of $\ts$ is stated in Theorem~\ref{thm:asymptoticTS}, while the consistency of critical values computed upon the test statistic's limiting distribution at estimated parameter values is validated in Theorem~\ref{thm:cont_quantiles} in Section~\ref{sec:GoF:crit}.
	\item Sections~\ref{sec:rv:rv} and \ref{sec:rv:ang} provide some necessary background but can be skipped by informed readers. Section~\ref{sec:rv:asy} provides the large-sample theory for $\tQ_p$ and can be skipped by readers who are mainly interested in the goodness-of-fit test itself.
	\item Section~\ref{sec:GoF:param} is a necessary digression on the asymptotics of parameter estimators of extremal dependence models and can be skipped upon first reading too.
\end{itemize}

\section{Regular variation and the angular measure}
\label{sec:rv}

\subsection{Regular variation}
\label{sec:rv:rv}

Let $F_1$ and $F_2$ denote the marginal cumulative distribution functions of $X_1$ and $X_2$, respectively, assumed to be continuous.
As in \cite{einmahl2001,einmahl2009maximum}, our working hypothesis is that the distribution of the standardized random vector
\[
	U = (U_1,U_2) = (1-F_1(X_1), 1-F_2(X_2)),
\]
is \emph{multivariate regularly varying} in the sense that there exists a Radon measure $\Lambda$ on $\spacerv \egdef [0,\infty]^2 \setminus \{(\infty,\infty)\}$ such that
\begin{equation}
\label{eq:Lambda}
	t^{-1} \PP \lc U/t \in \cdot \, \rc \xrightarrow{v} \Lambda(\,\cdot\,), \qquad t \ra 0,
\end{equation}
where $\xrightarrow{v}$ denotes \emph{vague convergence} of measures, see~\cite[Section~3.4]{resnick1987}. 
This is equivalent to assume that $U$ is in the minimal domain of attraction of a min-stable distribution with standardized margins~\cite[Proposition~5.17]{resnick1987}.
In terms of sets, vague convergence is equivalent to the following property: for any Borel set $B \subset \spacerv$ bounded away from the point $(\infty,\infty)$ and with $\Lambda(\partial B) = 0$, we have
\begin{equation}
\label{eq:regular_variation_sets}
	\lim_{t \ra 0} t^{-1} \PP \lc U \in tB \rc = \Lambda(B),
\end{equation}
where $tB = \{(tx,ty): (x,y) \in B\}$.

\begin{remark}[Link with the exponent measure]
Usually, multivariate regular variation is expressed through the standardized vector $(Z_1,Z_2) = (1/U_1,1/U_2)$ with Pareto margins instead of uniform ones, and the vague convergence takes place on $\spacerv^{-1} = [0,\infty]^2 \setminus \{(0,0)\}$ instead. The limiting measure obtained through this standardization is often denoted by $\nu$ and is referred to as the \emph{exponent measure}. Here, we work with $\Lambda$ for convenience, but it is clear that both measures are linked by the one-to-one relation
\begin{equation}
\label{eq:nu}
	\nu = \iota_\# \Lambda = \Lambda \circ \iota^{-1} \qquad \text{and} \qquad 
	\Lambda = \iota^{-1}_\# \nu = \nu \circ \iota,
\end{equation}
where $\iota: (x_1,x_2) \in \spacerv \mapsto  \iota(x_1,x_2) = (x_1^{-1},x_2^{-1}) \in \spacerv^{-1}$ is the inverse map and where the subscript $\#$ indicates the push-forward measure.
\end{remark}

Below, we list some properties of the measure $\Lambda$ which will be useful later:
\begin{itemize}
	\item $\Lambda$ has Lebesgue margins: for any $0 \leq u < \infty$,
	\begin{equation}
	\label{eq:uniform_margins}
		\Lambda([0,u] \times [0,\infty]) = \Lambda([0,\infty] \times [0,u]) = u.
	\end{equation}
	\item $\Lambda$ is homogeneous: for any $c > 0$ and any Borel set $B \subset \spacerv$,
	\begin{equation}
	\label{eq:homogeneity}
		\Lambda(cB) = c \Lambda(B).
	\end{equation}
\end{itemize}

\subsection{Angular measure}
\label{sec:rv:ang}

The angular measure is a finite measure derived from the exponent measure and with support contained in the intersection of the unit sphere and the positive orthant $[0,\infty)^2$. Even though the angular measure can be defined with respect to any norm, we will only consider $L_p$-norms, the ones most used in practice, which are defined, for $p \in [1,\infty]$ and $x = (x_1,x_2) \in \R^2$, by
\begin{equation}
\label{eq:p-norms}
	\|x\|_p \egdef
	\begin{dcases}
		(|x_1|^p+|x_2|^p)^{1/p} & \text{if $1 \leq p < \infty$,} \\
		\max(x_1,x_2) & \text{if $p = \infty$.}
	\end{dcases}
\end{equation}
For $p \in [1,\infty]$, we will let $\Phi_p$ denote the \emph{angular measure} associated to the measure $\Lambda$ in \eqref{eq:Lambda}, defined on any Borel set $A \subset [0,\pi/2]$ by
\begin{equation}
\label{eq:ang_meas_def}
	\Phi_p(A) \egdef \Lambda \lp \{(x_1,x_2) \in (0,\infty]^2: \|(x_1^{-1},x_2^{-1})\|_p \geq 1, \arctan(x_2/x_1) \in A \} \rp.
\end{equation}
The inverses appearing in the radial component come from the fact that $\Phi_p$ is usually defined in terms of $\nu$ in \eqref{eq:nu} rather than $\Lambda$.
If, for $\theta \in [0,\pi/2]$, we introduce the sets
\begin{equation}
\label{eq:Cptheta}
	C_{p,\theta} \egdef
	\begin{dcases}
		([0,\infty] \times \{0\}) \cup (\{\infty\} \times [0,1]) 
		&\text{if } \theta = 0, \\
		\lacc (x,y) : 0 \leq x \leq \infty, 0 \leq y \leq \min\{x \tan\theta, y_p(x)\} \racc &\text{if } 0 < \theta < \pi/2, \\
		\lacc (x,y) : 0 \leq x \leq \infty, 0 \leq y \leq y_p(x) \racc &\text{if } \theta = \pi/2,
	\end{dcases}
\end{equation}
where
\[
	y_p(x) \egdef
	\begin{dcases}
		\infty &\text{if } x \in [0,1), \\
		\lp 1 + \frac{1}{x^p-1} \rp^{1/p} &\text{if } x \in [1,\infty] \text{ and } p \in [1,\infty), \\
		1 &\text{if } x \in [1,\infty] \text{ and } p = \infty,
	\end{dcases}
\]
then it follows that 
\[
	\Phi_p(\theta) \egdef \Phi_p([0,\theta]) = \Lambda(C_{p,\theta}).
\]
For $x \geq 1$, $y_p(x)$ is the smallest value of $y \geq 1$ solving the equation $\|(x^{-1}, y^{-1})\|_p = 1$, while $x\tan\theta < y_p(x)$ if and only if $x < x_p(\theta) \egdef \|(1,\cot\theta)\|_p$.
We illustrate and relate these quantities in Figure~\ref{fig:C_p} in case $p=1$ and $0 < \theta < \pi/2$. 

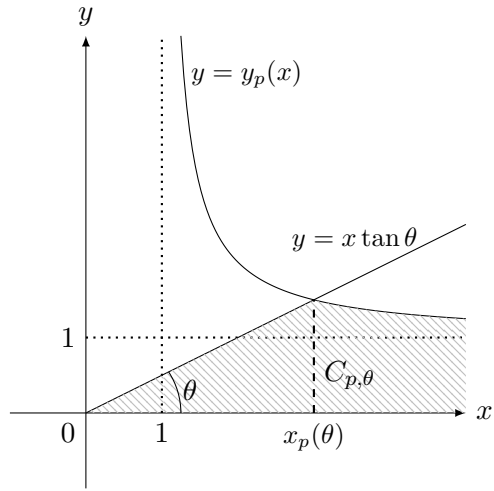
\begin{figure}[h]
	\centering
	\begin{tikzpicture}[scale=1]
		\draw[->,>=latex] (-1,0) -- (5,0);
		\draw (5,0) node[right]{$x$};
		\draw[->,>=latex] (0,-1) -- (0,5);
		\draw (0,5) node[above]{$y$};
		\draw[dotted,thick] (1,0) -- (1,5);
		\draw (1,0) node[below]{$1$};
		\draw[dotted,thick] (0,1) -- (5,1);
		\draw (0,1) node[left]{$1$};
		\draw (0,0) node[below left]{$0$};
		\draw [domain=0:5] plot(\x,{\x/2});
		\draw (4.5,2.3) node[left]{\small $y=x\tan\theta$}; 
		\draw [domain=1.25:5,samples=100] plot(\x,{1+1/(\x - 1)}); 
		\draw (1.25,4.5) node[right]{\small $y=y_p(x)$};
		\fill [pattern=north west lines, pattern color=lightgray] (0,0) -- plot [domain=0:3] (\x, \x/2) -- plot [domain=3:5] (\x,{1+1/(\x - 1)}) -- (5,0) -- cycle;
		\draw (3,0.5) node[right]{$C_{p,\theta}$};
		\draw[dashed, thick] (3,0) -- (3,1.5);
		\draw (3,0) node[below]{\small $x_p(\theta)$};
		\draw (1.25,0) arc (0:33:1);
		\draw (1.6,0.3) node[left]{$\theta$};
	\end{tikzpicture}
	\caption{The set $C_{p,\theta}$ in \eqref{eq:Cptheta} and related quantities for $p=1$ and $0 < \theta < \pi/2$.}
	\label{fig:C_p}
\end{figure}

The Lebesgue margins of $\Lambda$ in~\eqref{eq:uniform_margins} imply constraints on $\Phi_p$: necessarily
\begin{equation}
\label{eq:margins_phi}
	\int_0^{\pi/2} \frac{\sin\theta}{\|(\sin\theta,\cos\theta)\|_p} \d\Phi_p(\theta)
	= 1
	= \int_0^{\pi/2} \frac{\cos\theta}{\|(\sin\theta,\cos\theta)\|_p} \d\Phi_p(\theta).
\end{equation}
It will be convenient to work with the \emph{angular probability measure} $Q_p$ defined for Borel sets $A \subset [0,\pi/2]$ by
\begin{equation}
\label{eq:angular_prob_meas}
	Q_p(A) \egdef \frac{\Phi_p(A)}{\Phi_p([0,\pi/2])},
\end{equation}
for which the marginal constraints~\eqref{eq:margins_phi} become
\begin{equation}
\label{eq:margins_Q}
	\int_0^{\pi/2} \frac{\sin\theta}{\|(\sin\theta,\cos\theta)\|_p} \d Q_p(\theta)
	= m(Q_p)
	= \int_0^{\pi/2} \frac{\cos\theta}{\|(\sin\theta,\cos\theta)\|_p} \d Q_p(\theta),
\end{equation}
where $m(Q_p) \egdef \Phi_p([0,\pi/2])^{-1}$.

\subsection{Maximum empirical Euclidean likelihood estimator}
\label{sec:rv:mele}

Given is an independent random sample $\{(X_{i1}, X_{i2}): i = 1,\ldots, n\}$ with the same distribution as $(X_1,X_2)$. We would like to obtain a nonparametric estimator of $\Phi_p$ or, equivalently, $Q_p$. One way to proceed is to observe that, by~\eqref{eq:regular_variation_sets} and~\eqref{eq:ang_meas_def}, for any Borel set $A \subset [0,\pi/2]$ whose topological boundary is a $\Phi_p$-null set, we have, writing $\bF_j = 1 - F_j$,
\[
	\Phi_p(A) = \lim_{t \ra 0} t^{-1} \PP \lc \left\| \lp \frac{1}{\bF_1(X_1)}, \frac{1}{\bF_2(X_2)} \rp \right\|_p \geq t^{-1}, \; \arctan \left\{ \frac{\bF_2(X_2)}{\bF_1(X_1)} \right\} \in A \rc.
\]
Let $k = k(n)$ be such that $k \ra \infty$ and $k/n \ra 0$ as $n \ra \infty$. Replacing $t$ by $k/n$, the law of $(X_1,X_2)$ by its empirical counterpart on the sample and estimating the marginal distribution functions by $\hF_j(x_j) = n^{-1} \sum_{i=1}^n \1 \{X_{ij} < x_j\}$ with survival functions $\bhF_j = 1 - \hF_j$ for $j=1,2$,  leads to the \emph{empirical angular measure} with distribution function
\begin{align}
	\widehat{\Phi}_p(\theta) 
	&\egdef \frac{n}{k} \frac{1}{n} \sum_{i=1}^n \1 \left\{ \left\| \lp \frac{1}{\bhF_1(X_{i1})}, \frac{1}{\bhF_2(X_{i2})} \rp \right\|_p \geq \frac{n}{k}, \; \arctan \left\{ \frac{\bhF_2(X_{i2})}{\bhF_1(X_{i1})} \right\} \leq \theta \right\} \nonumber \\
	&= \frac{1}{k} \sum_{i=1}^n \1 \Big\{ (n+1-R_{i1})^{-p} + (n+1-R_{i2})^{-p} \geq k^{-p}, \; \frac{n+1-R_{i2}}{n+1-R_{i1}} \leq \tan\theta \Big\}, \label{eq:empirical_ang_meas}
\end{align}
for $\theta \in [0,\pi/2]$; here, $R_{ij}$ denotes the rank of $X_{ij}$ in the marginal sample $X_{1j}, \ldots, X_{nj}$.
This estimator has been studied from the asymptotic point of view in~\cite{einmahl2001,einmahl2009maximum} and from a finite-sample perspective (in general dimension) in~\cite{clemencon2023}.

The estimator $\widehat{\Phi}_p$ is not guaranteed to satisfy the marginal constraints~\eqref{eq:margins_phi} and is therefore not a \emph{true} angular measure itself. One solution is to enforce the constraints by maximizing an empirical Euclidean likelihood as done in~\cite{decarvalho2013} for the case $p=1$. In particular, they show that this procedure improves finite-sample performance of the estimator. 
Here, we extend their approach to the general case $p \in [1,\infty]$ to make use of the resulting estimator in our test.

Let
\[
	K \egdef \sum_{i=1}^n \1 \lacc \left\| \lp \frac{1}{\bhF_1(X_{i1})}, \frac{1}{\bhF_2(X_{i2})} \rp \right\|_p \geq \frac{n}{k} \racc
\]
be the (random) number of data points exceeding $n/k$ on the Pareto scale and denote these points by $(X_{i_j,1}, X_{i_j,2})$ for $j \in \{1,\ldots,K\}$. 
The collection of angles associated to those points is
\begin{equation}
\label{eq:hthetaj}
	\htheta_j \egdef \arctan \lacc \frac{\bhF_2(X_{i_j,2})}{\bhF_1(X_{i_j,1})} \racc, \qquad j \in \{1,\ldots,K\}.
\end{equation}
It is then easily seen from~\eqref{eq:empirical_ang_meas} that for any $\theta \in [0,\pi/2]$,
\[
	\hQ_p(\theta) \egdef \frac{\hPhi_p(\theta)}{\hPhi_p(\pi/2)} = \sum_{j=1}^K K^{-1} \1\{\htheta_j \leq \theta\}.
\]
Following in the footsteps of~\cite{decarvalho2013}, we suggest to modify this estimator by introducing the \emph{maximum (empirical) Euclidean likelihood estimator}
\begin{equation}
\label{eq:max_emp_lik_est}
    \tQ_p(\theta) \egdef \sum_{j=1}^{K} \widehat{p}_j \1 \lacc \htheta_j \leq \theta \racc, 
    \qquad \theta \in [0,\pi/2],
\end{equation}
with $\htheta_j$ as in~\eqref{eq:hthetaj} and where $\widehat{p} = (\widehat{p}_1,\ldots,\widehat{p}_K)$ is the solution the optimization problem
\begin{equation}
\label{eq:EuclLik}
	\min_{p \in \R^K} \frac{1}{2} \sum_{j=1}^K (Kp_j -1)^2
\end{equation}
under the constraints
\[
\left\{
\begin{array}{@{}l@{\;=\;}l}
	\sum_{j=1}^K p_j & 1, \\[1ex]
	\sum_{j=1}^K p_j f(\htheta_j) & 0,
\end{array}
\right.
\]
where we wrote
\begin{equation}
\label{eq:def_f}
	f(\theta) \egdef \frac{\sin\theta - \cos\theta}{\|(\sin\theta, \cos\theta)\|_p},
	\qquad \theta \in [0,\pi/2].
\end{equation}
The constraints ensure that the estimated measure has total mass equal to one and that~\eqref{eq:margins_Q} is satisfied. The objective function in \eqref{eq:EuclLik} is a second-order approximation to a negative empirical log-likelihood known as an empirical Euclidean log-likelihood \cite{owen1991empirical}, in view of the use of the Euclidean distance between the probability mass vector $(p_1,\ldots,p_K)$ and the uniform distribution $(1/K,\ldots,1/K)$.
The above optimization problem can be solved by the method of Lagrange multipliers, with solution
\[
	\widehat{p}_j = \frac{1}{K} \lacc 1 - \frac{\overline{f_{\htheta}}}{\sigma_{f_{\htheta}}^2} \lp f(\htheta_j) - \overline{f_{\htheta}} \rp \racc,
	\qquad j \in \{1,\ldots,K\},
\]
where we used the notation
\[
	\overline{f_{\htheta}} \egdef \frac{1}{K} \sum_{j=1}^K f(\htheta_j) \quad \text{and} \quad
	\sigma_{f_{\htheta}}^2 \egdef  \frac{1}{K} \sum_{j=1}^K \lp f(\htheta_j) - \overline{f_{\htheta}} \rp^2.
\]
Note that the weights $\widehat{p}_j$ may be negative, but that the set on which this happens has probability tending to zero, see~\cite[page~5]{decarvalho2013}. This follows from the facts that
\[
	\overline{f_{\htheta}} \xrightarrow{\PP} \mu_{Q_p}(f) = 0 \quad \text{and} \quad
	\sigma_{f_{\htheta}}^2 \xrightarrow{\PP} \sigma_{Q_p}^2(f) > 0
\]
as $n \ra \infty$, where we used the notation in~\eqref{eq:mu_var}, together with $\labs f(\htheta_j) - \overline{f_{\htheta}} \, \rabs \leq 2$.

\subsection{Asymptotic expansion and distribution}
\label{sec:rv:asy}

The asymptotic distribution of the empirical process
\begin{equation}
\label{eq:mel_process}
	\lacc \sqrt{k} \lp \tQ_p(\theta) - Q_p(\theta) \rp: \theta \in [0,\pi/2] \racc
\end{equation}
associated to the maximum Euclidean likelihood estimator derived in the previous section can be obtained on the basis of the one of the process $\sqrt{k}(\hPhi_p-\Phi_p)$ associated to the empirical angular measure. The latter was analyzed in~\cite{einmahl2001} for the case $p=\infty$ and in~\cite{einmahl2009maximum} for general $p \in [1,\infty]$. Based on those results, an asymptotic expansion was also derived in~\cite{lhaut2023asymptotic} which  will be needed to derive the limiting distribution of our test statistic as the sample size grows to infinity. First we recall the assumptions and the precise statement and then we derive an asymptotic expansion for the process~\eqref{eq:mel_process}.

\begin{assumption}
\label{ass:smoothness}
The measure $\Lambda$ is absolutely continuous with respect to the Lebesgue measure with a density $\lambda$ that is continuous on $[0,\infty)^2 \setminus \{(0,0)\}$.
Furthermore, $\Lambda(\{\infty\} \times [0,1]) = \Lambda([0,1] \times \{\infty\}) = 0$.
\end{assumption}

Assumption~\ref{ass:smoothness} implies that $\Phi_p$ is concentrated on $(0,\pi/2)$, thereby excluding asymptotic independence. Indeed, a calculation shows that $\Phi_p(\{0\}) = \Lambda(\{\infty\} \times [0,1])$ and $\Phi_p(\{\pi/2\}) = \Lambda([0,1] \times \{\infty\})$, both of which are zero by Assumption~\ref{ass:smoothness}. 
It also implies that the map $\theta \in (0,\pi/2) \mapsto \Phi_p(\theta)$ has a continuous derivative $\varphi_p$ on $(0,\pi/2)$. In particular, $\lambda$ and $\varphi_p$ are related through the following expression: for every $x,y>0$,
\begin{equation}
	\label{eq:densities_relation}
	\lambda(x,y) = \frac{xy}{x^2+y^2} \|(x,y)\|_p^{-1} \varphi_p(\arctan(y/x)).
\end{equation}
The proof of this identity can be found in~\cite[Appendix~A]{lhaut2023asymptotic}. Note that relation~\eqref{eq:densities_relation} implies
\begin{equation}
	\label{eq:lambda_homogen}
	\lambda(ax,ay) = a^{-1} \lambda(x,y), \qquad a > 0, \ (x,y) \in [0,\infty)^2 \setminus \{(0,0)\}.
\end{equation} 
Evaluating expression~\eqref{eq:densities_relation} at $(x,y) = (\cos\theta,\sin\theta)$ for $\theta \in (0/\pi/2)$ leads to the useful formula
\begin{equation}
\label{eq:phi_from_lambda}
	\varphi_p(\theta) 
	= \frac{\|(\cos\theta,\sin\theta)\|_p}{\cos\theta\sin\theta} \, \lambda(\cos\theta,\sin\theta).
\end{equation}

\begin{assumption}
\label{ass:bias}
If $c$ denotes the density of $U = (1-F_1(X_1),1-F_2(X_2))$, the quantity
\[
	\mathcal{D}_T(t) \egdef \iint_{\mathcal{L}_T} |tc(tu_1,tu_2) - \lambda(u_1,u_2)| \, \d u_1 \d u_2, \quad 1 \leq T < \infty, \, t>0,
\]
where $\mathcal{L}_T \egdef \{(u_1,u_2) \in [0,T]^2 : u_1 \wedge u_2 \leq 1 \}$, satisfies $\mathcal{D}_{1/t}(t) = \Oh(t^{\alpha_1})$ and for any $p \in [1,\infty]$,
\[
	\Phi_p(t) = \Oh(t^{\alpha_2}) \qquad \text{and} \qquad 
	\Phi_p \lp \tfrac{\pi}{2} \rp  - \Phi_p \lp \tfrac{\pi}{2} - t \rp = \Oh(t^{\alpha_3})
\]
for some $\alpha_1,\alpha_2, \alpha_3 > 0$ as $t \ra 0$, which can be taken independently of $p$. Furthermore, $$k = \oh \lp n^{\tfrac{2\alpha}{2\alpha+1}} \rp, \qquad \text{where } \alpha \egdef \min \{\alpha_1,\alpha_2,\alpha_3\}.$$
\end{assumption}

Note in particular that under Assumption~\ref{ass:bias},
\[
	\lim_{n \ra \infty} \sqrt{k} \, \mathcal{D}_{n/k}(k/n)
	= \lim_{n \ra \infty} \sqrt{k} \, \Phi_p \lp \tfrac{k}{n} \rp
	= \lim_{n \ra \infty} \sqrt{k} \, \lacc \Phi_p \lp \tfrac{\pi}{2} \rp  - \Phi_p \lp \tfrac{\pi}{2} - \tfrac{k}{n} \rp \racc = 0.
\]
Assumption~\ref{ass:bias} is key to control the bias between $\Phi_p(\theta) = \Lambda(C_{p,\theta})$ and its pre-asymptotic version $s \PP(sU \in C_{p,\theta})$ for $s>0$ large. Details are to be found in~\cite[Section~3]{lhaut2023asymptotic}. 

Let $W_\Lambda$ be a tight centered Wiener process indexed by Borel sets on $\spacerv$ with ``time'' $\Lambda$ and covariance function $\EE[W_\Lambda(C)W_\Lambda(C')] = \Lambda(C \cap C')$.

We are now ready to formulate Theorem~2 in~\cite{lhaut2023asymptotic}, and, consequently, Theorem~3.1 in~\cite{einmahl2009maximum}. Here and below, the symbol $\wc$ denotes weak convergence in a metric space as in~\cite{VVV1996}. Let $\ell^\infty(\mathcal{F})$ denote the Banach space of bounded functions from a set $\mathcal{F}$ to $\mathbb{R}$, equipped with the supremum norm.

\begin{theorem}
\label{thm:asymptoticsEAM}
Let $P$ be the law of $U = (1-F_1(X_1),1-F_2(X_2))$ and let $P_n$ denote the empirical measure of an independent sample $U_1,\ldots,U_n$ from the law of $U$. For $j \in \{1,2\}$ and $u \in [0,1]$, let the associated marginal empirical cumulative distribution functions be denoted by $\Gamma_{jn}(u) = n^{-1} \sum_{i=1}^n \1\{U_{ij} \leq u\}$. We set $\Gamma_{jn}(u) = u$ for $u > 1$. Let $Q_{jn} = \inf\{x \geq 0: \Gamma_{jn}(x) \geq u\}$ denote the quantile function associated to $\Gamma_{jn}$ and set $Q_{jn}(y) = 0$ for $0 \leq y \leq (2n)^{-1}$. Define the tail marginal empirical processes
\[
	w_{jn}(x) = \sqrt{k} \left\{ \tfrac{n}{k} \Gamma_{jn} (\tfrac{k}{n} x) - x \right\}
	\quad \text{and} \quad
	v_{jn}(x) = \sqrt{k} \left\{ \tfrac{n}{k} Q_{jn} (\tfrac{k}{n} x) - x \right\},
\]
for $j \in \{1,2\}$ and $x\geq 0$.
Under Assumptions~\ref{ass:smoothness} and~\ref{ass:bias}, we have for any $p \in [1,\infty]$
\[
	\sup_{\theta \in [0,\pi/2]} \labs \sqrt{k}\lp \hPhi_p(\theta) - \Phi_p(\theta) \rp - E_{n,p}(\theta) \rabs = \oh_{\PP}(1),
\]
as $n \to \infty$, where
\begin{align*}
	\lefteqn{
		E_{n,p}(\theta) \egdef \sqrt{k} \lacc \tfrac{n}{k} P_n \lp \tfrac{k}{n} C_{p,\theta} \rp - \tfrac{n}{k} P\lp \tfrac{k}{n} C_{p,\theta} \rp \racc 
	} \\
	&\quad + \int_0^{x_p(\theta)} \lambda(x,x\tan\theta) \lacc w_{1n}(x) \tan\theta - w_{2n} \lp x \tan\theta \rp \racc \d x \\
	&\quad + 
	\begin{dcases}
		\int_{x_p(\theta)}^\infty \lambda(x,y_p(x)) \lacc y_p'(x) w_{1n}(x) - w_{2n} \lp y_p(x) \rp \racc \d x & \text{ if } p < \infty, \\
		- w_{1n}(1) \int_1^{1 \vee \tan\theta} \lambda(1,y) \d y - w_{2n}(1) \int_{1 \vee \cot\theta}^\infty \lambda(x,1) \d x & \text{ if } p = \infty.
	\end{dcases}
\end{align*}

Consequently, for any $p \in [1,\infty]$, under the same assumptions, we have in $\ell^\infty([0,\pi/2])$ the weak convergence
\[
	\lacc \sqrt{k} \lp \widehat{\Phi}_p(\theta) - \Phi_p(\theta) \rp \racc_{\theta \in [0,\pi/2]} \wc \lacc \alpha_p(\theta) \egdef W_\Lambda(C_{p,\theta}) + Z_p(\theta)  \racc_{\theta \in [0,\pi/2]}
\]
where the process $Z_p$ is defined by
\begin{align*}
	&Z_p(\theta) 
	\egdef \int_0^{x_p(\theta)} \lambda(x,x\tan \theta) \left[W_1(x) \tan \theta - W_2(x \tan \theta)\right] \, \d x \\ 
	&\, +
	\begin{dcases}
		\int_{x_p(\theta)}^\infty \lambda(x,y_p(x))  \left[W_1(x) y_p'(x) - W_2(y_p(x))\right] \, \d x  &\text{if } p < \infty, \\
		- W_1(1) \int_1^{1 \vee \tan \theta} \lambda(1,y) \, \d y - W_2(1) \int_{1 \vee \cot \theta}^\infty \lambda(x,1) \, \d x   &\text{if } p = \infty,
	\end{dcases}
\end{align*}
with $y_p'$ the derivative of $y_p$ and with $W_1(x) \egdef W_\Lambda((0,x] \times (0,\infty])$ and $W_2(y) \egdef W_\Lambda((0,\infty] \times (0,y])$ the marginal processes.
\end{theorem}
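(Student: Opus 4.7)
The plan is to adopt the decomposition strategy developed in \cite{einmahl2009maximum} and refined in \cite{lhaut2023asymptotic}: split $\sqrt{k}(\hPhi_p(\theta) - \Phi_p(\theta))$ into three pieces, namely (i) a pure tail empirical part based on the true standardized vector $U$, (ii) a correction accounting for the substitution of marginal ranks for the unknown $\bF_j(X_{ij})$, and (iii) a deterministic bias term comparing $(n/k)\PP\bigl(U \in \tfrac{k}{n} C_{p,\theta}\bigr)$ to $\Lambda(C_{p,\theta}) = \Phi_p(\theta)$. Showing that (i) weakly converges to $W_\Lambda(C_{p,\cdot})$ as a process in $\theta$, that (ii) can be linearized into the boundary integrals appearing in $E_{n,p}$, and that (iii) is uniformly $\oh_\PP(k^{-1/2})$ delivers both the expansion and, jointly with the continuous mapping theorem, the weak convergence statement.

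For (i), I would rewrite $\sqrt{k}\bigl\{\tfrac{n}{k} P_n(\tfrac{k}{n} C_{p,\theta}) - \tfrac{n}{k} P(\tfrac{k}{n} C_{p,\theta})\bigr\}$ as a set-indexed tail empirical process over the nested family $\{C_{p,\theta} : \theta \in [0,\pi/2]\}$. This family has polynomial covering numbers, hence is Donsker under the Poisson point process intensity $\Lambda$, and the limit is the tight centered Gaussian process $W_\Lambda(C_{p,\theta})$ with covariance function $\Lambda(C_{p,\theta} \cap C_{p,\theta'})$.

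For (ii), which is the heart of the argument, I would view the pseudo-observations $(1/\bhF_1(X_{i1}), 1/\bhF_2(X_{i2}))$ as obtained from the true uniform $U_i$ via the marginal quantile inversions $Q_{jn}$. Decomposing the boundary of $C_{p,\theta}$ into the radial segment $\{y = x\tan\theta : 0 \leq x \leq x_p(\theta)\}$ and the curved piece $\{y = y_p(x) : x > x_p(\theta)\}$ (or into the two half-lines meeting at $(1,1)$ when $p = \infty$), a first-order expansion of the indicator of belonging to $\tfrac{k}{n} C_{p,\theta}$ around the true margins produces, after a change of variables along each boundary piece, an integral of the form $\int \lambda \cdot (\text{displacement}) \, \d x$. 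The boundary displacement is expressed through the tail marginal quantile process $v_{jn}$, which by standard Bahadur-type inversion is asymptotically equivalent to $-w_{jn}$ uniformly on compacts, yielding precisely the integrals over $\bigl[0, x_p(\theta)\bigr]$ and $\bigl[x_p(\theta), \infty\bigr)$ that appear in $E_{n,p}(\theta)$.

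Finally, for (iii), Assumption~\ref{ass:bias} combined with the growth condition $k = \oh(n^{2\alpha/(2\alpha+1)})$ forces $\sqrt{k}$ times the deterministic bias to vanish uniformly in $\theta$; the control on $\mathcal{D}_{n/k}(k/n)$ handles the interior of $[0,\pi/2]$ while the rates $\Phi_p(t) = \Oh(t^{\alpha_2})$ and $\Phi_p(\pi/2) - \Phi_p(\pi/2 - t) = \Oh(t^{\alpha_3})$ take care of the endpoints $\theta \in \{0,\pi/2\}$. The main obstacle throughout is uniformity in $\theta$, particularly near those endpoints where the boundary of $C_{p,\theta}$ degenerates to a half-axis and the integrals become improper; Assumption~\ref{ass:smoothness} together with the identity~\eqref{eq:densities_relation} linking $\lambda$ to $\varphi_p$ ensures integrability of the $\lambda$-integrals over each boundary piece. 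Combining (i)--(iii), and identifying the limits of $w_{1n}, w_{2n}$ as the marginal restrictions $W_j$ of $W_\Lambda$ via the Lebesgue-margin property~\eqref{eq:uniform_margins}, the continuous mapping theorem applied to $E_{n,p}$ yields the weak convergence to $\alpha_p = W_\Lambda(C_{p,\cdot}) + Z_p$.
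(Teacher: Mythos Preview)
Your proposal is correct and follows the same decomposition---tail empirical process on fixed sets, rank-correction linearized into boundary integrals, and bias control via Assumption~\ref{ass:bias}---that underlies the proofs in \cite{einmahl2009maximum} and \cite{lhaut2023asymptotic}. Note that the present paper does not supply its own proof of this statement but simply quotes it as Theorem~2 of \cite{lhaut2023asymptotic} (and Theorem~3.1 of \cite{einmahl2009maximum} for the weak-convergence part), so your sketch is effectively a summary of those external arguments.
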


Based on Theorem~\ref{thm:asymptoticsEAM}, a simple application of Slutsky's Lemma permits to obtain an expansion of the process associated to the empirical angular probability measure. Some computations lead to
\begin{equation}
\label{eq:expansion_EAPM}
    \sup_{\theta \in [0,\pi/2]} \labs \sqrt{k}\lp \hQ_p(\theta) - Q_p(\theta) \rp - 
    E_{n,p}^Q(\theta)
    \rabs = \oh_{\PP}(1),
\end{equation}
where
\begin{equation}
\label{eq:expansion_EAPM_name}
    E_{n,p}^Q(\theta) \egdef \frac{E_{n,p}(\theta)\Phi_p(\tfrac{\pi}{2}) - \Phi_p(\theta) E_{n,p}(\tfrac{\pi}{2})}{\Phi_p(\tfrac{\pi}{2})^2}, \qquad
    \theta \in [0,\pi/2].
\end{equation}
By the second part of Theorem~\ref{thm:asymptoticsEAM}, we get weak convergence in $\ell^\infty([0,\pi/2])$,
\begin{equation}
\label{eq:asymptotic_EAPM}
    \lacc \sqrt{k} \lp \hQ_p(\theta) - Q_p(\theta) \rp \racc_{\theta \in [0,\pi/2]} \wc 
    \lacc \beta_p(\theta) \egdef \frac{\alpha_p(\theta)\Phi_p(\tfrac{\pi}{2}) - \Phi_p(\theta) \alpha_p(\tfrac{\pi}{2})}{\Phi_p(\tfrac{\pi}{2})^2} \racc_{\theta \in [0,\pi/2]}.
\end{equation}

Let $\Psi : D_\Psi \subset \ell^\infty([0,\pi/2]) \ra \ell^\infty([0,\pi/2])$ be the map defined for each element $F \in D_\Psi$, the set of cumulative distribution functions of non-degenerate probability measures on $[0,\pi/2]$, by
\begin{equation}
\label{eq:psi_map}
	(\Psi(F))(\theta) \egdef F(\theta) - \frac{\mu_F(f)}{\sigma^2_F(f)} \int_0^\theta \lp f(\psi) - \mu_F(f) \rp \d F(\psi),
\end{equation}
where
\begin{equation}
\label{eq:mu_var}
	\mu_F(f) \egdef \int_0^{\pi/2} f(\psi) \d F(\psi) \quad \text{and} \quad
	\sigma_F^2(f) \egdef \int_0^{\pi/2} \lp f(\psi) - \mu_F(f) \rp^2  \d F(\psi),
\end{equation}
and $f$ is defined as in~\eqref{eq:def_f}. The Euclidean likelihood estimator $\tQ_p$ satisfies
\[
	\tQ_p(\theta) = (\Psi(\hQ_p))(\theta), \qquad \theta \in [0,\pi/2].
\]
Hadamard differentiability of the map $\Psi$ combined with the functional delta method~\cite[Theorem~20.8]{VVV1998} permit to derive an asymptotic expansion for the maximum Euclidean likelihood estimator. This is formalized in the following result, the proof of which is to be found in Appendix~\ref{sec:app:expansionMEL}.

\begin{corollary}
\label{cor:asymptoticsMEL}
The map $\Psi : D_\Psi \subset \ell^\infty([0,\pi/2]) \ra \ell^\infty([0,\pi/2])$ defined in~\eqref{eq:psi_map} is Hadamard differentiable at $Q_p \in D_\Psi$ tangentially to $C_0([0,\pi/2])$, the space of continuous functions on $[0,\pi/2]$ vanishing at the boundaries, with derivative given by
\[
	(\Psi'_{Q_p}[h])(\theta) = h(\theta) +  \frac{\int_0^{\pi/2} h(\psi) f'(\psi) \d\psi }{\sigma^2_{Q_p}(f)} \int_0^\theta f(\psi) \d Q_p(\psi)
\]
for any $h \in C_0([0,\pi/2])$ and $\theta \in [0,\pi/2]$. Furthermore, $\Psi'_{Q_p}$ is defined and continuous on the whole space $\ell^\infty([0,\pi/2])$ so that in $\ell^\infty([0,\pi/2])$,
\[
	\sqrt{k} \lp \tQ_p - Q_p \rp 
	= \sqrt{k} \lp \Psi(\hQ_p) - \Psi(Q_p) \rp
	= \Psi'_{Q_p} \lc \sqrt{k} \lp \hQ_p - Q_p \rp \rc + \oh_{\PP}(1).
\]
Consequently, under the same assumptions as Theorem~\ref{thm:asymptoticsEAM}, we have, as $n \ra \infty$,
\begin{multline*}
    \sup_{\theta \in [0,\pi/2]} \labs \sqrt{k} \lp \tQ_p(\theta) - Q_p(\theta) \rp - \lp E_{n,p}^Q(\theta) + \frac{\int_0^{\pi/2} E_{n,p}^Q(\psi) f'(\psi) \d\psi }{\sigma_{Q_p}^2(f)} \int_0^\theta f(\psi) \d Q_p(\psi) \rp  \rabs \\
    = \oh_{\PP}(1),
\end{multline*}
and, in $\ell^\infty([0,\pi/2])$,
\begin{multline*}
    \lacc \sqrt{k} \lp \tQ_p(\theta) - Q_p(\theta) \rp \racc_{\theta \in [0,\pi/2]} \\ \wc 
    \lacc \gamma_p(\theta) \egdef \beta_p(\theta) + \frac{\int_0^{\pi/2} \beta_p(\psi) f'(\psi) \d\psi }{\sigma_{Q_p}^2(f)} \int_0^\theta f(\psi) \d Q_p(\psi) \racc_{\theta \in [0,\pi/2]}.
\end{multline*}
\end{corollary}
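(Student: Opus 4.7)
The key simplifying observation is that the marginal constraint~\eqref{eq:margins_Q} forces $\mu_{Q_p}(f) = 0$, so $\Psi(Q_p) = Q_p$ and the perturbation analysis of $\Psi$ near $Q_p$ simplifies considerably. To establish Hadamard differentiability tangentially to $C_0([0,\pi/2])$, I would fix $h \in C_0([0,\pi/2])$ together with $t_n \downarrow 0$ and $h_n \to h$ uniformly such that $F_n \egdef Q_p + t_n h_n$ lies in $D_\Psi$, and expand $t_n^{-1}(\Psi(F_n)(\theta) - \Psi(Q_p)(\theta))$ term by term. The linear piece contributes $h_n(\theta) \to h(\theta)$. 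In the subtracted term, $\mu_{F_n}(f) = t_n \mu_{h_n}(f)$ (using $\mu_{Q_p}(f) = 0$) and $\sigma_{F_n}^2(f) = \sigma_{Q_p}^2(f) + \Oh(t_n)$, while the inner integral $\int_0^\theta (f - \mu_{F_n}(f)) \, dF_n$ converges uniformly in $\theta$ to $\int_0^\theta f \, dQ_p$ by dominated convergence and boundedness of $f$.

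The crucial step is a Stieltjes integration by parts applied to $\mu_{h_n}(f) = \int_0^{\pi/2} f \, dh_n$. Since $h_n(0) \to h(0) = 0$ and $h_n(\pi/2) \to h(\pi/2) = 0$, the boundary contributions vanish in the limit, leaving
\[
    \mu_{h_n}(f)
    = f(\tfrac{\pi}{2}) h_n(\tfrac{\pi}{2}) - f(0) h_n(0) - \int_0^{\pi/2} h_n(\psi) f'(\psi) \, d\psi
    \longrightarrow - \int_0^{\pi/2} h(\psi) f'(\psi) \, d\psi.
\]
Assembling the pieces yields the displayed formula for $\Psi'_{Q_p}[h]$, the ``$+$'' sign emerging from the sign flip in the integration by parts. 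Continuity of $\Psi'_{Q_p}$ on all of $\ell^\infty([0,\pi/2])$ is then transparent: the formula depends on $h$ only through the pointwise evaluation $h(\theta)$ and the bounded linear functional $h \mapsto \int_0^{\pi/2} h f' \, d\psi$, both continuous in the supremum norm.

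The final assertions follow from the functional delta method~\cite[Theorem~20.8]{VVV1998} applied to $\sqrt{k}(\hQ_p - Q_p)$, combined with the uniform-in-$\theta$ expansion~\eqref{eq:expansion_EAPM} to obtain the in-probability expansion for $\tQ_p$ and with the weak convergence~\eqref{eq:asymptotic_EAPM} to identify the limit $\gamma_p = \Psi'_{Q_p}[\beta_p]$. To invoke the tangential version of the delta method I would verify that $\beta_p$ has sample paths in $C_0([0,\pi/2])$ almost surely: continuity is inherited from that of $\alpha_p$ via Theorem~\ref{thm:asymptoticsEAM}, $\beta_p(\pi/2) = 0$ is immediate from the definition, and $\beta_p(0) = 0$ follows from $\Phi_p(0) = 0$ (Assumption~\ref{ass:smoothness}) combined with $\alpha_p(0) = 0$ (since $\Lambda(C_{p,0}) = 0$ and each summand of $Z_p(0)$ is seen to vanish by inspection of~\eqref{eq:Cptheta}). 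The main technical nuisance will be maintaining uniformity in $\theta$ in the remainders of the Hadamard expansion; this is controlled by boundedness of $f$, the uniform continuity of $Q_p$ coming from Assumption~\ref{ass:smoothness}, and the lower bound $\sigma_{Q_p}^2(f) > 0$ stated in Section~\ref{sec:rv:mele}.
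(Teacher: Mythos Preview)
Your proposal is correct and follows essentially the same route as the paper's proof: both rely on the Stieltjes integration-by-parts identity to convert integrals against $dh$ (or $dF$) into integrals of $h$ (or $F$) against $f'$, after which the Hadamard derivative is read off and the functional delta method is invoked. Your version is more explicit in checking that $\beta_p$ almost surely takes values in the tangent space $C_0([0,\pi/2])$, a point the paper leaves implicit.
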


The asymptotic process in Corollary~\ref{cor:asymptoticsMEL} equals the one in Theorem~4.1 of~\cite{einmahl2009maximum}, showing that, at least asymptotically, enforcing the marginal constraints~\eqref{eq:margins_Q} via maximization of the empirical likelihood or its Euclidean version makes no difference, as already discussed and illustrated in~\cite{decarvalho2013} for the case $p=1$. 

\section{Goodness-of-fit tests for parametric models}
\label{sec:GoF}

\subsection{Test statistic}
\label{sec:GoF:ts}

Given a parametric model $\mod = \{\Phi_{p,r}: r \in \param \subseteq \R^m\}$ for the angular measure $\Phi_p$, our main goal is to test
\begin{equation}
\label{eq:THEtest}
	H_0: \Phi_p \in \mod \qquad \text{vs} \qquad H_1: \Phi_p \notin \mod.
\end{equation}
As a test statistic, we propose to use a weighted version of the \emph{$L_1$-Wasserstein distance} from optimal transport theory~\cite{Villani2003} between $\tQ_p$ in \eqref{eq:max_emp_lik_est} and $Q_{p,\widehat{r}_n}$, where $\est$ is some estimator of the true parameter $r_0$ under $H_0$, which can be seen as a nuisance parameter. More explicitly, our test statistic will be of the form
\begin{equation}
\label{eq:test_statistics}
	\ts \egdef \sqrt{k} \int_0^{\pi/2} \labs \tQ_p(\theta) - Q_{p,\est}(\theta) \rabs \weight(\theta) \d\theta,
\end{equation}
where $\weight : (0,\pi/2) \ra (0,\infty)$ is some positive, integrable weight function. The special case $\weight \equiv 1$ reduces to the classical $L_1$-Wasserstein distance. Under differentiability assumptions on the model $\mod$ and general conditions on the estimator $\est$, we can derive the asymptotic distribution of $\ts$ in Section~\ref{sec:GoF:crit}. Before we do so, we develop an asymptotic expansion of $\est$ in Section~\ref{sec:GoF:param}.

\subsection{Parameter estimator and empirical stable tail dependence function}
\label{sec:GoF:param}

The condition on the parameter estimator will be formulated in terms of the \emph{stable tail dependence function} $\ell : [0,\infty)^2 \ra [0,\infty)$ obtained by taking the limit
\begin{equation}
\label{eq:stdf}
	\ell(x_1,x_2) \egdef \lim_{t \ra 0} t^{-1} \PP \lc U_1 \leq tx_1 \text{ or } U_2 \leq tx_2 \rc,
\end{equation}
which is known to exist due to~\eqref{eq:regular_variation_sets}. 
This function describes the extremal dependence structure of a random vector and has a close link with the angular measure, see~\cite[Section~8.6.2]{beirlant05}.
As for the angular measure, we may estimate $\ell$ nonparametrically by replacing the distribution functions in the expression by their empirical counterparts and by replacing $t$ by $k/n$. This leads to the rank-based estimator
\begin{equation}
\label{eq:stdf_estimation}
	\widehat{\ell}_n(x_1,x_2) \egdef \frac{1}{k} \sum_{i=1}^n \1 \lacc R_{i1} > n + \tfrac{1}{2} - kx_1 \text{ or } R_{i2} > n + \tfrac{1}{2} - kx_2 \racc,
\end{equation}
where $R_{ij}$ denotes the rank of $X_{ij}$ in the marginal sample $X_{1j}, \ldots, X_{nj}$. The constant $1/2$ is added in the indicators to improve finite-sample properties but will not play any role in our asymptotic considerations~\cite[Equation~(7.1)]{einmahl2012}.

Let $x \dotp y$ denote the scalar product of $x, y \in \R^m$ and let $|x|$ denote the Euclidean norm of $x \in \R^m$.

\begin{assumption}
\label{ass:modelAndEstimator}
Let $r_0 \in \param$ denote the parameter value such that $\Phi_p = \Phi_{p,r_0}$ under $H_0$.
\begin{enumerate}
\item The point $r_0$ lies in the interior of $\param$ and the map $\paramap: r \in \param \subseteq \R^m \mapsto Q_{p,r} \in \ell^\infty([0,\pi/2])$ is Hadamard differentiable at $r_0$ with derivative $\paramap'_{r_0} : \R^m \ra \ell^\infty([0,\pi/2])$. 
For every $\theta \in [0,\pi/2]$, the map $r \in \param \mapsto \varrho_{p,r}(\theta)$ is differentiable and the derivative satisfies
\[
	\int_0^{\pi/2} \labs \nabla_r \varrho_{p,r_0}(\theta) \rabs \d\theta < \infty.
\]
Differentiation with respect to $r$ and integration with respect to $\theta$ can be exchanged while considering the map $(r,\theta) \in \param \times [0,\pi/2] \mapsto \varrho_{p,r}(\theta)$, so that
\[
	(\paramap_r'[h])(\theta) = \int_0^\theta \nabla_r \varrho_{p,r}(x) \d x \dotp h.
\]
\item The estimator $\est$ satisfies the following expansion~\cite[Assumption~4.6]{hu2024}: there exists a finite Borel measure $\sigma$ on $[0,1]^2$ and a function $g : [0,1]^2 \ra \R^m$ in $\lp L_2([0,1]^2,\sigma) \rp^m$ such that, as $n \ra \infty$,
\[
	\sqrt{k}(\est - r_0) = \int_{[0,1]^2} \sqrt{k} \lp \widehat{\ell}_n(x_1,x_2) - \ell(x_1,x_2) \rp g(x_1,x_2) \, \d\sigma(x_1,x_2) + \oh_{\PP}(1).
\]
\end{enumerate}
\end{assumption}

The second part of Assumption~\ref{ass:modelAndEstimator} will permit to derive an asymptotic expansion of $\sqrt{k} \, ( \est - r_0 )$ on the basis of an expansion of the process $\sqrt{k} \, ( \widehat{\ell}_n - \ell )$ and hence to derive the limit distribution of $\ts$ under $H_0$ using the first part of the assumption. It is satisfied by many common estimators, e.g., M-estimators (and hence moment estimators) or weighted least-square estimators, see Examples~4.8--10 in \cite{hu2024}.

Asymptotic theory for the process $\sqrt{k} \, (\widehat{\ell}_n - \ell)$ is studied in~\cite{einmahl2012}, in general dimension $d \geq 2$ (the definitions are analogous to the bivariate case). For any $T > 0$, the process is asymptotically Gaussian in the space $\ell^\infty([0,T]^d)$, and this under general conditions which are met in our bivariate setting. Under the same assumptions as in~\cite{einmahl2012}, we provide an asymptotic expansion of the process in general dimension too.

\begin{assumption}
\label{ass:stdf_expansion}
Let $(X_1,\ldots,X_d)$ be a random vector in $\R^d$ with continuous distribution function $F$ and marginal distribution functions $F_1,\ldots,F_d$. Let $U = (1-F_1(X_1),\ldots,1-F_d(X_d))$ be its standardization to uniform margins, whose law will be denoted by $P$, for which the stable tail dependence function is assumed to exist, that is, for $x \in [0,\infty)^d$, the limit
\[
\lim_{t \ra 0} t^{-1} P \lc tA_x \rc = \lim_{t \ra 0} t^{-1} \PP \lc U_1 \leq tx_1 \text{ or } \ldots \text{ or } U_d \leq tx_d \rc \egdef \ell(x_1,\ldots,x_d)
\]
exists, with $A_x \egdef \{u \in [0,\infty)^d: u_1 \leq x_1 \text{ or } \ldots \text{ or } u_d \leq x_d \}$. Moreover, for some $\alpha > 0$,
\begin{enumerate}
	\item $|t^{-1} \PP \lc U_1 \leq tx_1 \text{ or } \ldots \text{ or } U_d \leq tx_d \rc - \ell(x_1,\ldots,x_d)| = \Oh(t^\alpha)$ uniformly in $x \in [0,\infty)^d$ such that $x_1 + \cdots + x_d = 1$ as $t \ra 0$;
	\item $k = \oh(n^{2\alpha/(1+2\alpha)})$ and $k \ra \infty$ as $n \ra \infty$;
	\item for every $j \in \{1,\ldots,d\}$, the first-order partial derivative of $\ell$ with respect to $x_j$, denoted by $\dot{\ell}_j$, exists and is continuous on the set of points $x$ such that $x_j > 0$.
\end{enumerate}
\end{assumption}

\begin{theorem}[Asymptotic expansion of the empirical stable tail dependence function]
\label{thm:expansionSTDF}
Under Assumption~\ref{ass:stdf_expansion}, let $P_n$ denote the empirical distribution of an independent random sample from $P$ of size $n$.
Then for every $T > 0$, as $n \ra \infty$,
\[
	\sup_{x \in [0,T]^d} \labs \sqrt{k} \lp \ell_n(x) - \ell(x) \rp - \lp v_n(x) - \sum_{j=1}^{d} \dot{\ell}_j(x) v_{nj}(x_j) \rp \rabs = \oh_{\PP}(1),
\]
where, for every $x \in [0,\infty)^d$ and $j=1,\ldots,d$,
\[
	v_n(x) = \sqrt{k} \lacc \tfrac{n}{k} P_n \lp \tfrac{k}{n} A_x \rp - \tfrac{n}{k} P \lp \tfrac{k}{n} A_x \rp \racc, \quad
	v_{nj}(x_j) = v_n(0,\ldots,0,x_j,0,\ldots,0),
\]
and where $\dot{\ell}_j$ is taken to be the right-hand partial derivative if $x_l = 0$ for some $l = 1,\ldots,d$ (the latter always exists, see~\cite[Equation~(2.7)]{einmahl2012}).

Furthermore, if $d=2$, Assumption~\ref{ass:stdf_expansion} is satisfied with the same $\alpha > 0$ as in Assumption~\ref{ass:bias} provided Assumptions~\ref{ass:smoothness} and~\ref{ass:bias} are verified.
\end{theorem}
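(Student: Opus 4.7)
The plan for the first part is to follow and slightly refine the approach of~\cite{einmahl2012}, producing a linearization rather than just a weak limit. I would introduce the oracle estimator based on the unobservable standardized vector $U$,
\[
    \ell_n^*(x) \egdef \tfrac{n}{k} P_n\lp \tfrac{k}{n} A_x \rp,
\]
and split $\sqrt{k}(\widehat{\ell}_n(x) - \ell(x)) = \sqrt{k}(\ell_n^*(x) - \ell(x)) + \sqrt{k}(\widehat{\ell}_n(x) - \ell_n^*(x))$. The first summand equals $v_n(x) + \sqrt{k}\{(n/k) P((k/n) A_x) - \ell(x)\}$, and the bias remainder vanishes uniformly on $[0,T]^d$: the homogeneity $A_{sx} = s A_x$ reduces control on $[0,T]^d$ to control on the unit simplex, where Assumption~\ref{ass:stdf_expansion}(i) together with $k = \oh(n^{2\alpha/(1+2\alpha)})$ forces $\sqrt{k}(k/n)^\alpha \to 0$.

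For the second summand, I would exploit that $\widehat{\ell}_n$ depends on the sample only through the ranks, so that up to $\Oh(1/k)$-corrections one may write $\widehat{\ell}_n(x) = \ell_n^*(\widehat{q}_n(x))$ with $\widehat{q}_n(x)_j = (n/k) \widehat{Q}_{jn}((k/n) x_j)$ built from the marginal empirical quantile functions. A first-order expansion of $\ell_n^*$ around $x$, split as $\ell(\widehat q_n(x)) - \ell(x)$ plus $(\ell_n^* - \ell)(\widehat q_n(x)) - (\ell_n^* - \ell)(x)$ and justified by uniform asymptotic equicontinuity of $v_n$ at scale $k/n$ together with continuity of $\dot{\ell}_j$ on $\{x_j > 0\}$, combined with Vervaat's inequality giving $\sqrt{k}(\widehat{q}_n(x)_j - x_j) = -v_{nj}(x_j) + \oh_{\PP}(1)$ uniformly in $x_j \in [0,T]$, delivers
\[
    \sqrt{k}(\widehat{\ell}_n(x) - \ell_n^*(x)) = -\sum_{j=1}^d \dot{\ell}_j(x) v_{nj}(x_j) + \oh_{\PP}(1)
\]
uniformly in $x \in [0,T]^d$, from which the claimed expansion follows.

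For the second part $(d=2)$, condition~(ii) of Assumption~\ref{ass:stdf_expansion} is exactly the $k$-growth condition in Assumption~\ref{ass:bias}. Condition~(iii) follows from Assumption~\ref{ass:smoothness} via the identity $\ell(x_1,x_2) = x_1 + x_2 - \Lambda([0,x_1] \times [0,x_2])$: differentiating under the integral gives $\dot{\ell}_1(x_1,x_2) = 1 - \int_0^{x_2} \lambda(x_1,u) \d u$ for $x_1 > 0$, continuous in that region, and similarly for $\dot{\ell}_2$, with the right-sided interpretation at axis points. For condition~(i), I would write
\[
    t^{-1} \PP[U \in t A_x] - \ell(x) = \int_{A_x} \{ tc(tv_1, tv_2) - \lambda(v_1, v_2) \} \d v_1 \d v_2
\]
and split $A_x$ into $A_x \cap [0,1/t]^2 \subset \mathcal{L}_{1/t}$, on which Assumption~\ref{ass:bias} provides the bound $\mathcal{D}_{1/t}(t) = \Oh(t^{\alpha_1})$ uniformly in $x$ on the simplex, and the two axial strips $A_x \cap \lacc v_1 \leq x_1, v_2 > 1/t \racc$ and $A_x \cap \lacc v_1 > 1/t, v_2 \leq x_2 \racc$, on which $c(tv) = 0$ since $c$ is supported in $[0,1]^2$. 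By homogeneity and~\eqref{eq:ang_meas_def}, the $\Lambda$-mass of these strips—equal to $(1/t)[\ell(tx_1, 1) - 1]$ and $(1/t)[\ell(1, tx_2) - 1]$ respectively—is controlled by $\Phi_p$-mass near the boundaries $\{\pi/2\}$ and $\{0\}$, yielding $\Oh(t^{\alpha_3})$ and $\Oh(t^{\alpha_2})$. Summing the three contributions gives a uniform $\Oh(t^\alpha)$ bound.

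The main obstacle in the first part will be the margin-correction step: establishing the uniform linearization requires delicate handling of the stochastic equicontinuity of $v_n$ at the scaling $k/n$, particularly near the axes $\{x_j = 0\}$ where $\dot{\ell}_j$ is only right-continuous; however, this is precisely the regime treated by~\cite{einmahl2012}. In the second part, the delicate point is relating the axial-strip $\Lambda$-mass to the angular-measure boundary decay uniformly in $x$ on the simplex, but this follows cleanly from the displayed identity together with $\ell(s, 1) - 1 = \Oh(s^{1+\alpha_3})$ and the analogous bound on the other side.
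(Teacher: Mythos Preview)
Your proposal is correct and follows essentially the same route as the paper. The only cosmetic differences are: the paper groups the decomposition as $\sqrt{k}(T_n(S_n)-V_n(S_n)) + \sqrt{k}(V_n(S_n)-\ell(S_n)) + \sqrt{k}(\ell(S_n)-\ell)$ and handles the equicontinuity step $v_n(S_n(x)) \approx v_n(x)$ via an explicit Skorokhod construction from~\cite{einmahl1997} rather than invoking asymptotic equicontinuity of $v_n$ directly; and for the second part the paper passes to polar coordinates on the region $\{u_1\wedge u_2\le 1,\ u_1\vee u_2>1/t\}$ to bound the strip mass by $\Phi_p(t)+\{\Phi_p(\pi/2)-\Phi_p(\pi/2-t)\}$, which is equivalent to your route through the identity $(1/t)[\ell(tx_1,1)-1]=\Lambda([0,x_1]\times(1/t,\infty])$ and the bound $\ell(s,1)-1=\Oh(s^{1+\alpha_3})$.
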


The proof of Theorem~\ref{thm:expansionSTDF} can be found in Appendix~\ref{sec:app:expansionSTDF}.

\subsection{Limit distribution of test statistic and estimation of critical values}
\label{sec:GoF:crit}

We are now ready to state the limit distribution of the test statistic $\ts$ under $H_0$. The proof of Theorem~\ref{thm:asymptoticTS} is provided in Appendix~\ref{sec:app:asymptoticTS}.

\begin{theorem}
\label{thm:asymptoticTS}
If Assumptions~\ref{ass:smoothness},~\ref{ass:bias} and~\ref{ass:modelAndEstimator} are satisfied and if $r_0 \in \param$ denotes the true value of the parameter under $H_0$, then under $H_0$, we have
\[
	\ts \wc L_{r_0} =
	 \int_0^{\pi/2} \labs \gamma_{p,r_0}(\theta) - \int_0^\theta \nabla_r \varrho_{p,r_0}(x) \d x \dotp I_{g,\sigma,r_0} \rabs \weight(\theta) \d\theta,
\]
as $n \ra \infty$, where we recall $\gamma_p$ in Corollary~\ref{cor:asymptoticsMEL} and where
\[
	I_{g,\sigma,r_0} = \int_{[0,1]^2} \Big( W_{\Lambda_{r_0}}(A_{(x_1,x_2)}) - \sum_{j=1}^2 \dot{\ell}_{r_0,j}(x_1,x_2) W_j(x_j) \Big) g(x_1,x_2) \d\sigma(x_1,x_2).
\]
\end{theorem}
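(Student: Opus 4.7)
The plan is to write $\sqrt{k}\lp \tQ_p(\theta) - Q_{p,\est}(\theta) \rp$ as a difference of two terms, show joint weak convergence of these two terms in $\ell^\infty([0,\pi/2])$, and then apply the continuous mapping theorem to the weighted $L_1$-norm functional $h \mapsto \int_0^{\pi/2} |h(\theta)| \weight(\theta) \d\theta$, which is continuous from $\ell^\infty([0,\pi/2])$ to $\R$ provided $\weight$ is integrable.

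\textbf{Step 1: Decomposition.} Under $H_0$, $Q_p = Q_{p,r_0}$, so I decompose
\[
	\sqrt{k} \lp \tQ_p(\theta) - Q_{p,\est}(\theta) \rp = \sqrt{k} \lp \tQ_p(\theta) - Q_{p,r_0}(\theta) \rp - \sqrt{k} \lp Q_{p,\est}(\theta) - Q_{p,r_0}(\theta) \rp.
\]
The first term converges weakly in $\ell^\infty([0,\pi/2])$ to $\gamma_{p,r_0}$ by Corollary~\ref{cor:asymptoticsMEL}. For the second term, I apply the functional delta method together with the Hadamard differentiability of $\paramap$ at $r_0$ from Assumption~\ref{ass:modelAndEstimator}(1), which gives
\[
	\sqrt{k} \lp Q_{p,\est} - Q_{p,r_0} \rp
	= \paramap'_{r_0}\lc \sqrt{k}(\est - r_0) \rc + \oh_{\PP}(1)
	= \lp \int_0^\theta \nabla_r \varrho_{p,r_0}(x) \d x \rp \dotp \sqrt{k}(\est - r_0) + \oh_{\PP}(1),
\]
uniformly in $\theta$.

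\textbf{Step 2: Linearization of $\sqrt{k}(\est - r_0)$.} Using Assumption~\ref{ass:modelAndEstimator}(2), I express $\sqrt{k}(\est - r_0)$ as an $L_2(\sigma)$-integral against $\sqrt{k}(\widehat{\ell}_n - \ell)$ plus a negligible remainder. Substituting the asymptotic expansion of the empirical stable tail dependence function from Theorem~\ref{thm:expansionSTDF} yields
\[
	\sqrt{k}(\est - r_0) = \int_{[0,1]^2} \Big( v_n(x_1,x_2) - \sum_{j=1}^2 \dot{\ell}_{r_0,j}(x_1,x_2) v_{nj}(x_j) \Big) g(x_1,x_2) \d\sigma(x_1,x_2) + \oh_{\PP}(1),
\]
where $v_n$, $v_{nj}$ are the empirical processes from Theorem~\ref{thm:expansionSTDF} (in dimension $d=2$), whose joint weak limit is $W_{\Lambda_{r_0}}(A_{(x_1,x_2)})$ together with its marginal restrictions $W_1, W_2$.

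\textbf{Step 3: Joint weak convergence.} This is the principal technical hurdle. Both $\sqrt{k}(\tQ_p - Q_{p,r_0})$ and $\sqrt{k}(\est - r_0)$ are, up to $\oh_{\PP}(1)$ terms, continuous linear functionals of the same underlying tail empirical process on $\spacerv$. Concretely, the linearization from Corollary~\ref{cor:asymptoticsMEL} writes the first term via $E_{n,p}^Q$, which depends on the process $\tfrac{n}{k}(P_n - P)(\tfrac{k}{n} \cdot)$ evaluated on the sets $C_{p,\theta}$ together with the tail marginal processes $w_{1n},w_{2n}$; the linearization from Step~2 depends on the same $\tfrac{n}{k}(P_n - P)(\tfrac{k}{n} \cdot)$ evaluated on the sets $A_x$ together with (scaled versions of) the same marginals. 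Joint weak convergence of these empirical processes on the product class of sets (as elements of the appropriate product space $\ell^\infty([0,\pi/2]) \times \R^m$) then follows from the joint functional central limit theorem underlying~\cite{einmahl2001, einmahl2009maximum, einmahl2012, lhaut2023asymptotic}, because weak convergence on each coordinate is already established and the two empirical processes are pushforwards of a single one. Combined with Slutsky, this yields joint weak convergence to the pair $(\gamma_{p,r_0}, I_{g,\sigma,r_0})$ with the desired joint Gaussian law driven by the common $W_{\Lambda_{r_0}}$ and its marginals $W_1,W_2$.

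\textbf{Step 4: Continuous mapping.} Since $\weight$ is positive and integrable on $(0,\pi/2)$, the map $(h,v) \in \ell^\infty([0,\pi/2]) \times \R^m \mapsto \int_0^{\pi/2} |h(\theta) - (\int_0^\theta \nabla_r \varrho_{p,r_0}(x)\d x) \dotp v| \weight(\theta) \d\theta$ is continuous (the linear factor $\int_0^\theta \nabla_r \varrho_{p,r_0}$ is uniformly bounded by Assumption~\ref{ass:modelAndEstimator}(1), and $|\cdot|$ is $1$-Lipschitz). Applying the continuous mapping theorem to the joint convergence in Step~3 gives $\ts \wc L_{r_0}$ as claimed.

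The main obstacle is Step~3: rigorously establishing the \emph{joint} weak convergence so that the difference in Step~1 converges jointly with the subtracted drift. Each piece is known from the cited results, but assembling them as marginals of a single empirical process on a union of index sets (the sets $C_{p,\theta}$ together with $A_{(x_1,x_2)}$, and the common tail marginal empirical processes) requires verifying that the joint VC-type envelope conditions hold; fortunately, this follows from the same bracketing arguments used in \cite{einmahl2009maximum,einmahl2012} since both classes are of VC type with square-integrable envelopes and driven by the same underlying sample.
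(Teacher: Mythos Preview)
Your proposal is correct and follows essentially the same approach as the paper's proof: decompose under $H_0$, apply Corollary~\ref{cor:asymptoticsMEL} to the first term, Assumption~\ref{ass:modelAndEstimator} plus Theorem~\ref{thm:expansionSTDF} to the second, and then argue that both expansions are continuous functionals of a single tail empirical process indexed by a VC-type class (the sets $C_{p,\theta}$, $A_{(x_1,x_2)}$ and the marginals), so that joint weak convergence follows and the continuous mapping theorem yields the claim. The paper dispatches your Step~3 in exactly the way you anticipate, by citing the joint asymptotics of this process from \cite{lhaut2023asymptotic}.
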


Under an additional regularity condition on the model, which will be easily verified in practice, one may show that the critical values for the asymptotic distribution can be computed from an estimated version of the true unknown parameter under $H_0$. The critical values can thus be computed by Monte Carlo simulation from the limit distribution of $T_n$ in Theorem~\ref{thm:asymptoticTS} but at the estimated parameter value, as explained in Section~\ref{sec:simulations}.

\begin{assumption}
\label{ass:density_reg}
Let $r_0 \in \param$ be an interior point and introduce the function
\[
	\weightt : \theta \in [0,\pi/2] \mapsto \weightt(\theta) \egdef  \min \lacc \tan\theta, \tan(\tfrac{\pi}{2} - \theta) \racc \in [0,1].
\]
\begin{enumerate}
	\item We have
	\[
		\sup_{\theta \in [0,\pi/2]} \lacc \weightt(\theta) \varphi_{p,r_0}(\theta) \racc < \infty.
	\]
	\item There exists $\nu \in (0,1)$ such that
	\[
		\lim_{n \ra \infty} \sup_{\theta \in [0,\pi/2]} \lacc \weightt(\theta)^\nu \labs \varphi_{p,r_n}(\theta) - \varphi_{p,r_0}(\theta) \rabs \racc = 0,
	\]
	and 
	\[
		\lim_{n \ra \infty} \sup_{\theta \in [0,\pi/2]} \lacc \weightt(\theta)^\nu \labs \nabla_r \varphi_{p,r_n}(\theta) - \nabla_r \varphi_{p,r_0}(\theta) \rabs \racc = 0,
	\]
	for any sequence $(r_n)_{n \in \N} \subset \param$ such that $r_n \ra r_0$ as $n \ra \infty$.
\end{enumerate}
\end{assumption}

\begin{theorem}
\label{thm:cont_quantiles}
For any $\alpha \in (0,1)$ and $r \in \param$, let $\quantile_{r}(\alpha)$ be the $\alpha$-quantile of $L_{r}$. Under the same assumptions as in Theorem~\ref{thm:asymptoticTS} in combination with Assumption~\ref{ass:density_reg}, we have
\[
	\quantile_{\est}(1-\alpha) \xrightarrow{\PP} \quantile_{r_0}(1-\alpha), \qquad \alpha \in (0,1),
\]
for any consistent estimator $\est$ of $r_0$, the true parameter under $H_0$. The convergence also holds locally uniformly: for any $[\alpha_0, \alpha_1] \subset [0,1]$,
\[
	\sup_{\alpha \in [\alpha_0, \alpha_1]} \labs \quantile_{\est}(1-\alpha) - \quantile_{r_0}(1-\alpha) \rabs \xrightarrow{\PP} 0.
\]
\end{theorem}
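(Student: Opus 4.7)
The plan is to reduce the claimed convergence in probability to a deterministic continuity statement. Since $\est \xrightarrow{\PP} r_0$, the standard subsequence principle implies that it suffices to prove $\quantile_{r_n}(1-\alpha) \to \quantile_{r_0}(1-\alpha)$ whenever $r_n \to r_0$ is a \emph{deterministic} sequence in $\param$. In turn this deterministic claim follows from two ingredients: (A) weak convergence $L_{r_n} \wc L_{r_0}$ in $\R$, and (B) continuity of the distribution function of $L_{r_0}$ at $\quantile_{r_0}(1-\alpha)$. Together they yield convergence of quantiles by the classical inversion argument. The locally uniform statement in the second display of the theorem is then obtained by a Pólya-type argument exploiting the monotonicity of $\alpha \mapsto \quantile_{r_0}(1-\alpha)$ and its continuity on $[\alpha_0,\alpha_1]$, which itself is a consequence of (B).

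For step (A), I would realize all the Gaussian processes $\{W_{\Lambda_r} : r \in \param\}$ on a single probability space via a Gaussian white noise $W$ on $\spacerv$ with Lebesgue intensity, setting $W_{\Lambda_r}(B) \egdef \int_B \sqrt{\lambda_r(x)}\, W(\d x)$. Under this coupling, the continuity of $\lambda_r$ in $r$, obtained from Assumption~\ref{ass:density_reg} combined with the identity~\eqref{eq:densities_relation} linking $\lambda_r$ to $\varphi_{p,r}$, propagates to pointwise convergence of $\alpha_{p,r_n}(\theta)$ and $Z_{p,r_n}(\theta)$, and hence of $\beta_{p,r_n}(\theta)$ and $\gamma_{p,r_n}(\theta)$, as well as of $\nabla_r \varrho_{p,r_n}$, of $\dot{\ell}_{r_n,j}$, and finally of the random vector $I_{g,\sigma,r_n}$. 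The main technical difficulty is to control the integrability of the integrand defining $L_r$ near the endpoints $\theta \in \{0, \pi/2\}$, where $\varphi_{p,r}$ may blow up. This is exactly the role of the weight $\weightt(\theta)^\nu$ in part~2 of Assumption~\ref{ass:density_reg}: together with the uniform bound $\sup_\theta \weightt(\theta)\varphi_{p,r_0}(\theta) < \infty$ from part~1, it provides an envelope in $L_1(\weight(\theta)\,\d\theta)$ for the family of integrands, allowing a dominated-convergence argument to conclude $L_{r_n} \to L_{r_0}$ almost surely along the coupling, hence $L_{r_n} \wc L_{r_0}$ in distribution.

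For step (B), note that $L_{r_0}$ is a continuous, non-constant functional of a centered Gaussian process whose covariance is non-degenerate through the Wiener component $W_{\Lambda_{r_0}}(C_{p,r_0,\theta})$. Its distribution is absolutely continuous on $(0,\infty)$: by conditioning on the Gaussian coordinates orthogonal to a one-dimensional direction along which $L_{r_0}$ varies strictly (equivalently, via a Malliavin absolute-continuity criterion), one sees that $L_{r_0}$ admits no atom in $(0,\infty)$, while a possible atom at $0$ is excluded because the Gaussian integrand is almost surely non-zero on a set of positive Lebesgue $\weight$-measure. I expect the main obstacle of the proof to lie in step (A), specifically in the technical verification that Assumption~\ref{ass:density_reg} delivers the required uniform integrability at the singular angles and that the coupling transmits continuity in $r$ all the way through $\gamma_{p,r}$, $\nabla_r\varrho_{p,r}$, $\dot{\ell}_{r,j}$, and $I_{g,\sigma,r}$; once this is in place, steps (B) and the Pólya-type locally uniform refinement are essentially standard.
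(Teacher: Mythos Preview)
Your high-level architecture---reduce to deterministic $r_n \to r_0$ via subsequences, establish $L_{r_n} \wc L_{r_0}$, invoke continuity of the limiting CDF, then a P\'olya-type argument for local uniformity---matches the paper's. The genuine divergence is in how you argue~(A).

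The paper does \emph{not} couple the processes. Instead it first proves, as a separate lemma, that $\{W_{\Lambda_{r_n}}(f) : f \in \cF\} \wc \{W_{\Lambda_{r_0}}(f) : f \in \cF\}$ in $\ell^\infty(\cF)$ for a carefully designed function class $\cF$ (indicators of $C_{p,\theta}$, \emph{weighted} indicators of marginal half-spaces to tame growth at infinity, and indicators $\1_{A_{(x,y)}}$). This is done by fidi convergence plus asymptotic tightness via covering-number bounds in the intrinsic standard-deviation metric. Then, writing $X_r = T_r(W_{\Lambda_r})$ for a linear operator $T_r : \ell^\infty(\cF) \to \ell^\infty([0,\pi/2])$, it shows $T_{r_0}$ is bounded and $\|T_{r_n} - T_{r_0}\| \to 0$ (decomposing $T_r$ into pieces corresponding to $\alpha_p$, $\beta_p$, $\gamma_p$, and the $\nabla_r\varrho$-term); the extended continuous mapping theorem then yields $L_{r_n} \wc L_{r_0}$.

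Your white-noise coupling $W_{\Lambda_r}(B) = \int_B \sqrt{\lambda_r}\,\d W$ is a legitimate alternative and potentially more direct, but two points deserve care. First, the coupling delivers $L^2$ (hence in-probability) convergence of $W_{\Lambda_{r_n}}(B)$ for each fixed $B$, not almost-sure convergence; your dominated-convergence step should therefore be run in expectation, e.g.\ via $\EE|L_{r_n}-L_{r_0}| \le \int_0^{\pi/2} \EE|X_{r_n}(\theta)-X_{r_0}(\theta)|\,q(\theta)\,\d\theta$ with DCT in~$\theta$. Second, the envelope you need is not only in $\theta$ near $\{0,\pi/2\}$ but also in the $x$-variable out to infinity inside the integrals defining $Z_{p,r}(\theta)$, since $W_1(x)$ has variance~$x$; this is precisely why the paper introduces the weight $w(x) = (x^\eta \vee x^\gamma)^{-1}$ in its class~$\cF$. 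Your coupling route will end up reproving essentially the same integrability estimates as the paper's operator-norm bounds, just organized differently. What your approach buys is avoiding the tightness/entropy machinery; what the paper's approach buys is a clean modular decomposition (process convergence $+$ operator convergence) that isolates where each assumption enters.

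On~(B), the paper simply asserts that the CDF of $L_{r_0}$ is continuous and strictly increasing without further argument, so your gesture toward a Gaussian absolute-continuity criterion is in fact more than the paper provides.
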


The proof of Theorem~\ref{thm:cont_quantiles} is involved and can be found in Appendix~\ref{sec:app:cont_quantiles}.

\begin{remark}
Point 1 of Assumption~\ref{ass:modelAndEstimator} is formulated in terms of the map $r \in \param \mapsto Q_{p,r}$ but is fully equivalent to its counterpart for the map $r \in \param \mapsto \Phi_{p,r}$. The same remark holds true for Assumption~\ref{ass:density_reg} which is equivalent to the present formulation where $\varphi_{p,r}$ would be replaced by $\varrho_{p,r}$. The normalizing constant permitting to move from one definition to another, even though it depends on $r$, can be shown to have no influence on our assumptions.
\end{remark}

\section{Simulations and finite-sample performance}
\label{sec:simulations}

Theorems~\ref{thm:asymptoticTS} and~\ref{thm:cont_quantiles} provide a natural way to perform the goodness-of-fit test proposed in Eq.~\eqref{eq:THEtest}, for any size $\alpha \in (0,1)$. Given a model to be tested and an estimator for the unknown parameter of the model, one can approximate the quantiles $\quantile_{r}(1-\alpha)$ for any $r \in \param$ by directly simulating the random variable $L_r$, the distribution of which only depends on the given model and the asymptotic expansion of the estimator as in point~2 of Assumption~\ref{ass:modelAndEstimator}, and then taking empirical quantiles. This is typically done for a finite grid of values for $r \in \param$ and, by interpolation, leads to critical values for any value of the parameter. Once those values have been obtained, they can be used for any dataset for which the model needs to be tested, provided that the estimator of the model parameter satisfies the same expansion as the one used to obtain the critical values. The procedure is illustrated for the logistic (Section~\ref{sec:simu-logistic}) and the Hüsler--Reiss (Section~\ref{sec:simu-HR}) models. Details underlying the simulation of $L_r$ are to be found in Appendix~\ref{sec:app:simus}. An \texttt{R}-package is under construction, and in the meantime, software will be available from the first author on GitHub \footnote{E-mail \href{mailto:stephane.lhaut@uclouvain.be}{stephane.lhaut@uclouvain.be} if the repository is not yet available at the time of reading.}.

\subsection{Logistic model}
\label{sec:simu-logistic}

\paragraph*{Model.} 
One of the most famous and oldest parametric families of bivariate extreme value dependence structures is the \emph{logistic model} with the stable tail dependence function~\eqref{eq:stdf}
\[
	\ell_r(x,y) = \lp x^{1/r} + y^{1/r} \rp^r, \qquad r \in \param \egdef (0,1].
\]
The model ranges from asymptotic full dependence ($r \ra 0$) to asymptotic independence ($r=1$). Since $\ell_r(x,y) = x + y - \Lambda_r([0,x] \times [0,y])$, one easily obtains
\[
	\lambda_r(x,y) = \lp \frac{1}{r}-1 \rp \frac{(xy)^{\frac{1}{r}-1}}{(x^{1/r}+y^{1/r})^{2-r}}, \qquad x,y > 0.
\]
For $r \in (0,1)$, the angular measure $\Phi_{p,r}$ on the $L_p$-sphere, for $1 \leq p < \infty$, satisfies $\Phi_p(\{0,\pi/2\}) = 0$, while on the interior of the sphere $(0,\pi/2)$, it is absolutely continuous with a density which can be computed using relation~\eqref{eq:phi_from_lambda} to be
\[
	\varphi_{p,r}(\theta) = \lp \frac{1}{r}-1 \rp \|(\sin\theta,\cos\theta)\|_p \frac{(\sin\theta \cos\theta)^{\frac{1}{r}-2}}{\lp (\sin\theta)^{1/r} + (\cos\theta)^{1/r} \rp^{2-r}}, \qquad \theta \in (0,\pi/2).
\]
In particular, Assumption~\ref{ass:smoothness} is satisfied for this model. Point~1 of Assumption~\ref{ass:modelAndEstimator} and Assumption~\ref{ass:density_reg} are also satisfied for $r_0 = 0.5$.

\paragraph*{Parameter estimation.}
Given a random sample $X_1, \ldots, X_n$, the unknown parameter $r \in \param$ of the model will be estimated by inverting the \emph{extremal coefficient}~\cite[Chapter~8]{beirlant05} $$
	\chi \egdef \ell(1,1) \in [1,2],
$$
which equals $\chi = 2^r$ for the logistic model. In the limiting case $\chi \to 1$, we approach asymptotic full dependence while $\chi \to 2$ means we reach asymptotic independence. Our estimator simply consists of writing $r = \log_2(\chi)$ and replacing $\chi$ by its empirical version leading to
\begin{equation}
\label{eq:estimation_logistic}
	\widehat{r}_n = \log_2(\widehat{\ell}_n(1,1)),
\end{equation}
where $\widehat{\ell}_n$ is defined in~\eqref{eq:stdf_estimation}. It follows from the delta method that $\widehat{r}_n$ satisfies the asymptotic expansion
\[
	\sqrt{k} \lp \widehat{r}_n - r \rp
	= \sqrt{k} \lp \widehat{\ell}_n(1,1) - \ell(1,1) \rp \cdot \frac{1}{2^r \log(2)} + \oh_{\PP}(1),
\]
as $n \ra \infty$, meaning that point~2 of Assumption~\ref{ass:modelAndEstimator} is satisfied with
\[
	g \equiv \frac{1}{2^r \log(2)} \qquad \text{and} \qquad
	\sigma = \delta_{(1,1)}.
\]

\paragraph*{Data generating processes.}
We generate data from the copula mixture model
\begin{equation}
\label{eq:copmixmod}
	C_\lambda(u,v) 
	= (1 - \lambda) C_0(u,v)  + \lambda C_a(u,v), 
	\qquad (u,v) \in [0,1]^2,
\end{equation}
for $\lambda \in \{0,0.1,0.2,\ldots,0.8\}$ where
\begin{equation}
\label{eq:simuC0logistic}
	C_0(u,v) \egdef 
	\exp \left\{ - \lp (-\log u)^2 + (-\log v)^2 \rp^{0.5} \right\}
\end{equation}
is the \emph{Gumbel} copula with parameter equal to $2$ and with two choices for the second component $C_a \in \{C_1,C_2\}$:
\begin{itemize}
\item In scenario~1, 
$
	C_1(u,v) \egdef \min(u,v)
$
is the copula of the random vector $(U,U)$ where $U$ is uniformly distributed on $[0,1]$.
\item In scenario~2,
\[
	C_2(u,v) \egdef \exp \lacc - \max(-0.7 \log u, -0.1 \log v) - \max(-0.3 \log u, -0.9 \log v) \racc
\]
is the copula of the max-linear factor model
\[
	\begin{pmatrix}
		X_1 \\ X_2
	\end{pmatrix}
	=
	\begin{pmatrix}
		0.7 Z_1 \vee 0.3 Z_2 \\ 0.1 Z_1 \vee 0.9 Z_2 
	\end{pmatrix}
\]
where $Z_1$ and $Z_2$ are independent Fréchet(1) random factors.
\end{itemize}

\begin{figure}[H]
\begin{subfigure}{0.49\textwidth}
    \includegraphics[width=\textwidth]{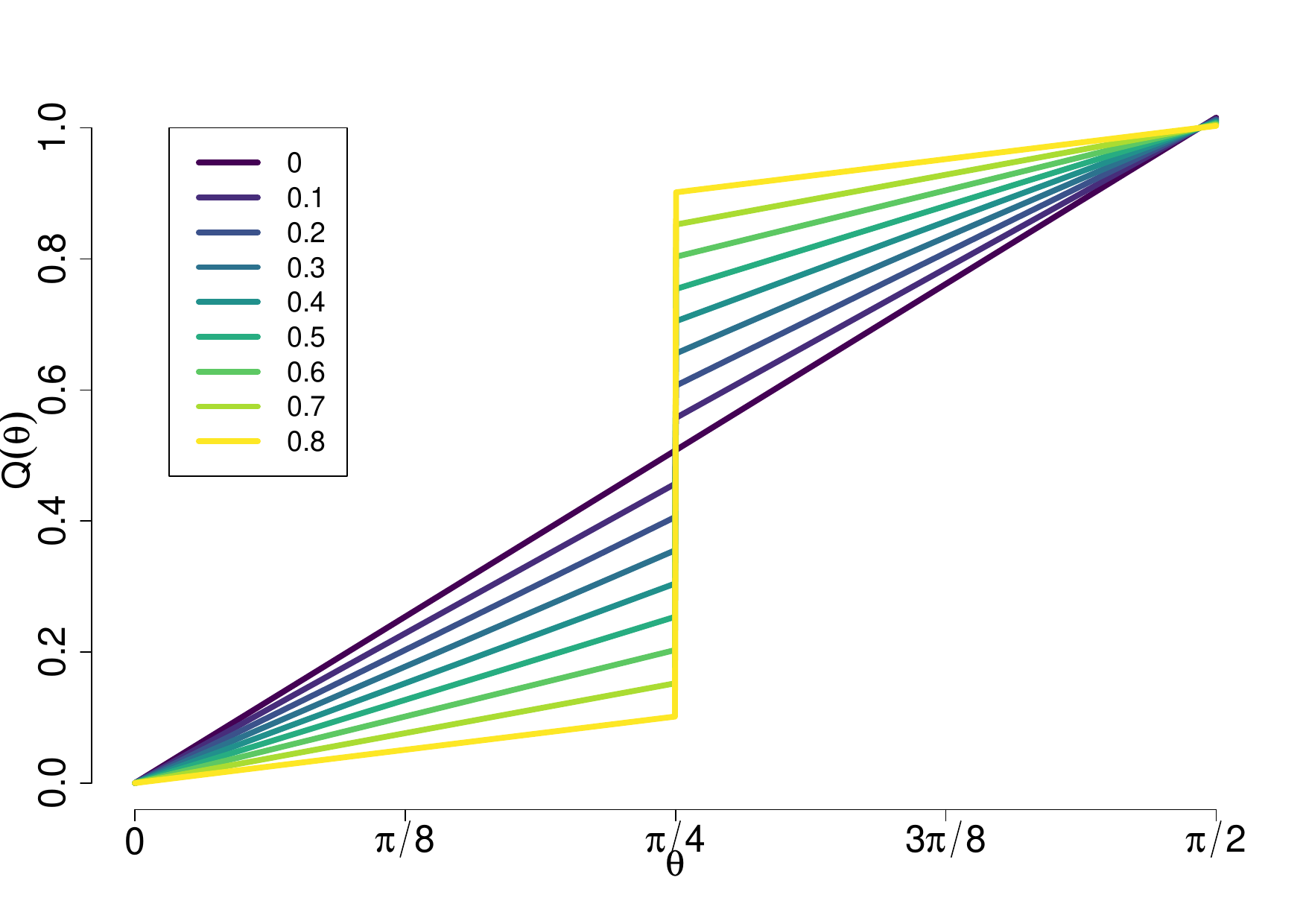}
    \caption{Scenario 1, $C_a = C_1$}
    \label{fig:sc1}
\end{subfigure}
\hfill
\begin{subfigure}{0.49\textwidth}
    \includegraphics[width=\textwidth]{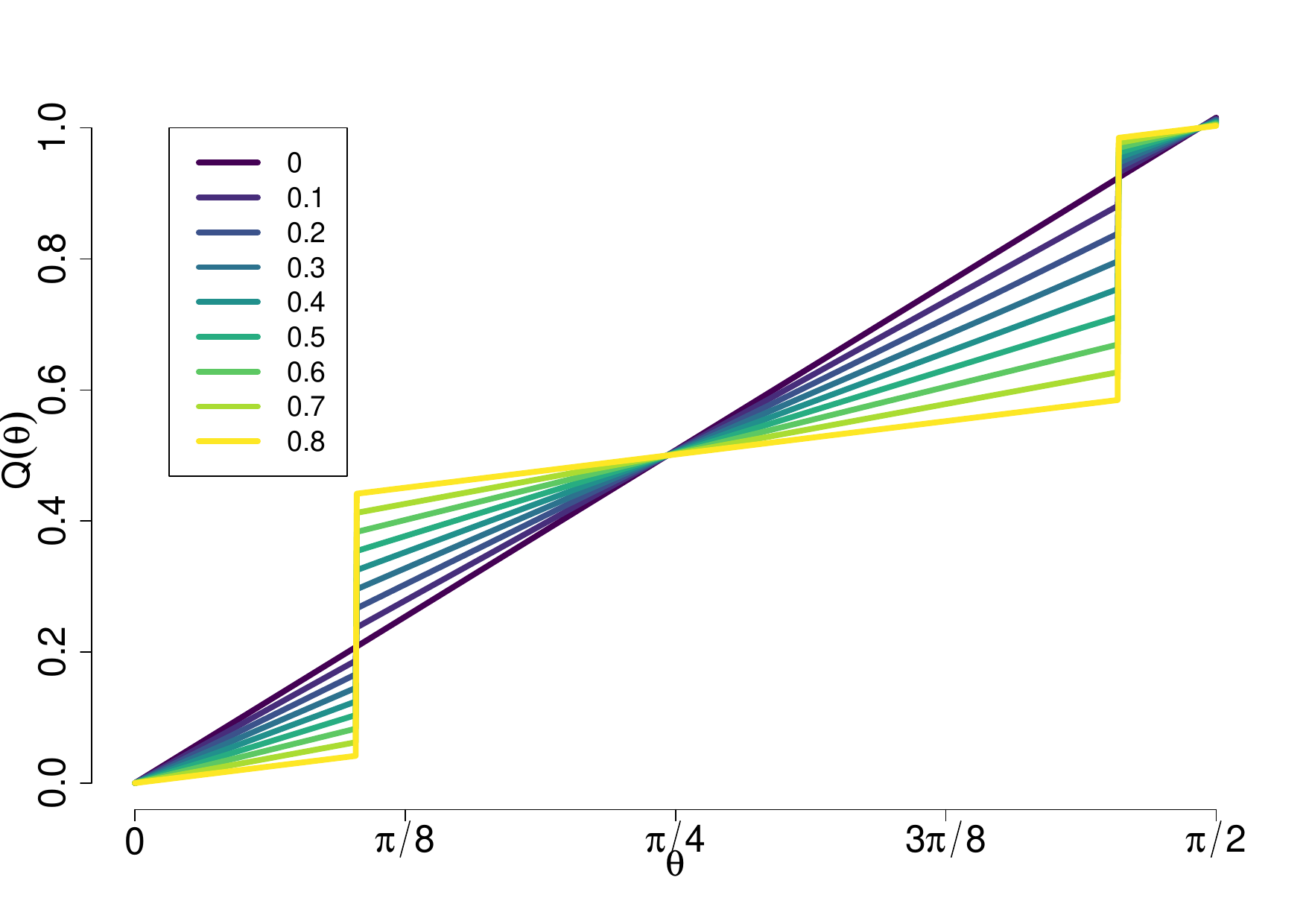}
    \caption{Scenario 2, $C_a = C_2$}
    \label{fig:sc2}
\end{subfigure}
\caption{Angular distribution functions $\theta \mapsto Q_p(\theta)$ for $p=2$ of the copula mixture model~\eqref{eq:copmixmod} with Gumbel copula $C_0$ in \eqref{eq:simuC0logistic} and under two scenarios for $C_a$.}
\label{fig:sc}
\end{figure}

In both cases, when $\lambda = 0$, the null hypothesis of the test~\eqref{eq:THEtest} is satisfied for the logistic model with parameter $r_0 = 0.5$ and Assumption~\ref{ass:bias} holds true if we choose $\alpha = 1$, while for $0 < \lambda < 1$, the null hypothesis is no longer satisfied. The asymptotic dependence structures corresponding to those mixtures are represented on Figure~\ref{fig:sc} by their angular distribution function $Q_p$ for $p = 2$. Scenario~1 corresponds exactly to the simulation setup in~\cite{dehaan2008}, while scenario~2 is similar to the one in~\cite{can2015}, to facilitate comparison.

\paragraph*{Results.}

The results for scenario~1 are presented in Figure~\ref{fig:results_sc1}. On the top-left panel, estimates of the logistic model parameter are shown as a function of the mixture weight $\lambda$. Results are consistent with the choice $r_0 = 0.5$ and the fact that stronger dependence corresponds to smaller logistic parameter values. The three other panels show the empirical power (green line) of the test for different values of the sample size $n$ and effective sample size $k$. The weight function is $q(\theta) = 1/\sqrt{|\theta - \pi/4|}$, emphasizing the points close to $\theta = \pi/4$. Regarding Figure~\ref{fig:sc1}, this choice is expected to help in scenario~1, but, perhaps more surprisingly, it is also helpful in scenario~2 where the largest deviation from the null hypothesis is no longer located at $\theta = \pi/4$. Overall, such a choice produces better results than the classical $L_1$-Wasserstein distance with $q \equiv 1$.
Both the power and the asymptotic quantile are estimated on the basis of $2000$ replications of the corresponding quantities. The size $\alpha$ of the test is $5\%$ in all cases and is shown as a horizontal brown line. The graphs confirm the good performance of the test. On the bottom-right panel, the additional orange dots visualize the empirical power of the method in~\cite{dehaan2008} for the same scenario and sample size $n$. Their test is a bit more conservative under the null, while its power under the alternative is less than the one of our method. The slight power decrease as $\lambda \ra 1$ is unsurprising since $H_0$ holds true for the copula $C_1$ with $r = 0$. 

The corresponding results for scenario~2 are presented in Figure~\ref{fig:results_sc2}. The empirical power curves are obtained in the same way as for scenario~1. The weighted $L_1$-Wasserstein test easily detects the asymmetric alternative. The same scenario is considered in~\cite{can2015} (with a slightly different factor model for the alternative, yielding different locations of the atoms of the angular measure), where an empirical power of $90\%$ to $95\%$ is attained depending on the method. In the bottom-right panel, the empirical power almost attains $1$, even for $\lambda = 0.8$ and $n = 500$. The method in \cite{can2015} has the advantage that the critical values do not depend on the model and thus have to be computed only once; the computation of the test statistic is more involved, however.

\begin{figure}[h]
    \centering
    \begin{subfigure}{0.45\textwidth}
        \centering
   		\includegraphics[scale=0.2]{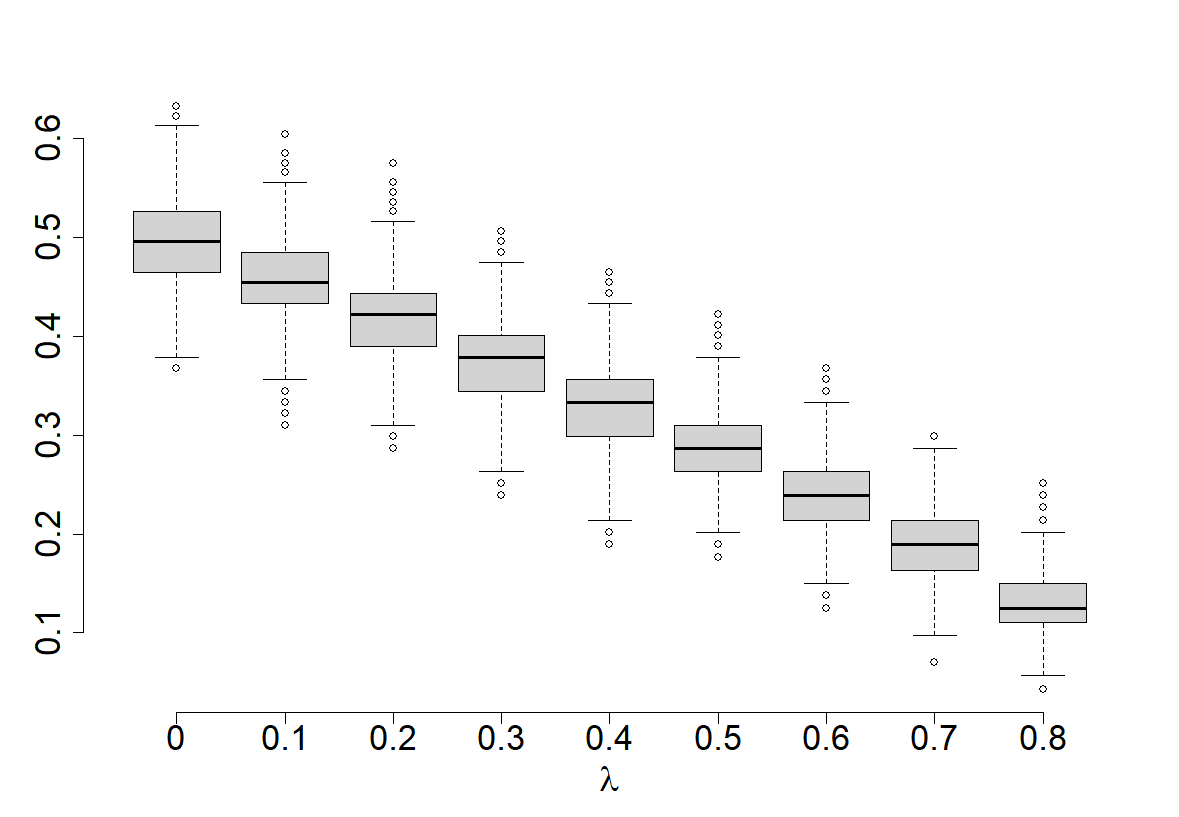}
   		\caption{Parameter estimates, $n=10\,000$ and $k=100$}
    \end{subfigure}
    \hspace{0.05\textwidth}
    \begin{subfigure}{0.45\textwidth}
        \centering
        \includegraphics[scale=0.2]{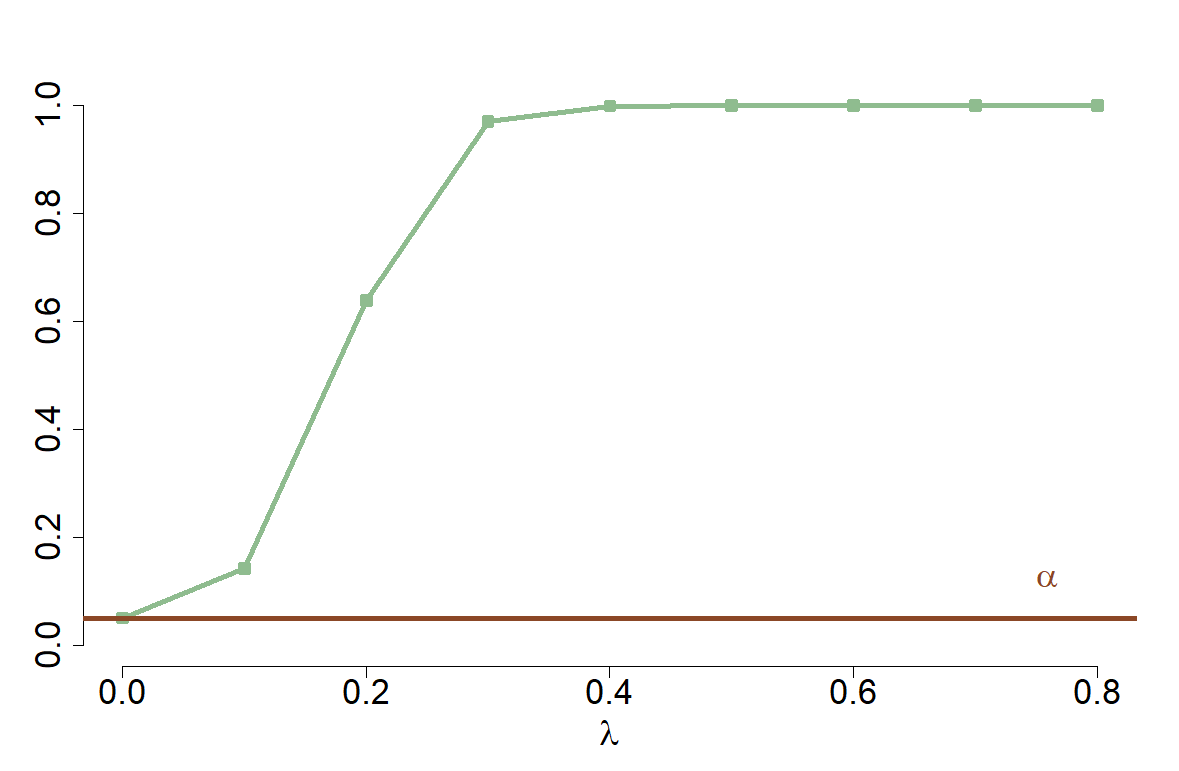}
   		\caption{Power curve, $n=10\,000$ and $k=100$}
    \end{subfigure}
    \vspace{0.5cm}
    \begin{subfigure}{0.45\textwidth}
        \centering
  			\includegraphics[scale=0.2]{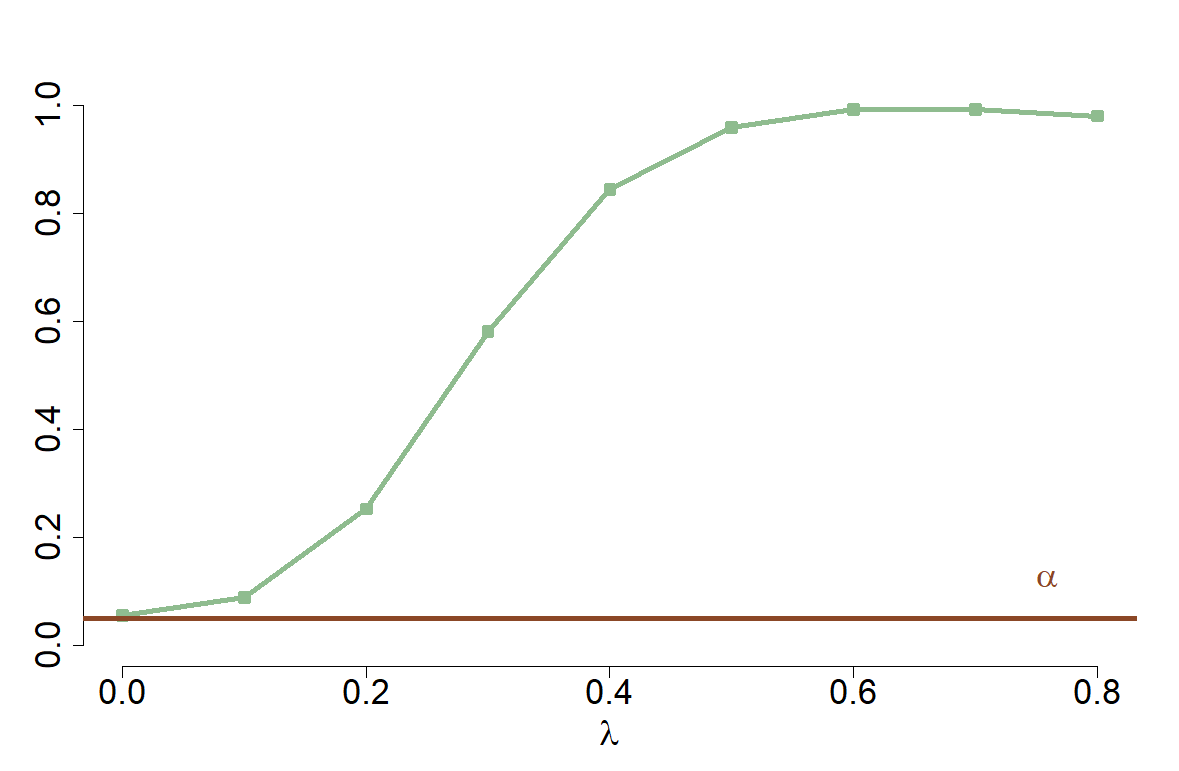}
   		\caption{Power curve, $n=3\,000$ and $k=50$}
    \end{subfigure}
    \hspace{0.05\textwidth}
    \begin{subfigure}{0.45\textwidth}
        \centering
   		\includegraphics[scale=0.2]{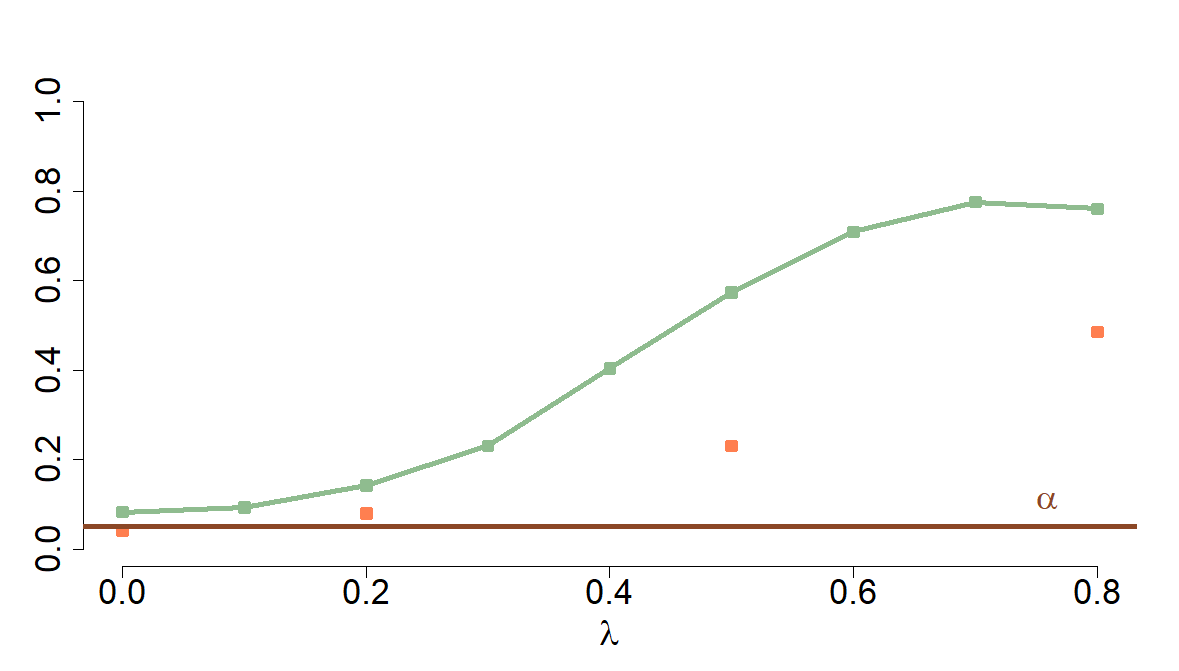}
   		\caption{Power curve, $n=500$ and $k=25$; orange dots show power of test in \cite{dehaan2008}}
    \end{subfigure}
\caption{Logistic model: parameter estimates (a) and empirical power curves (b--d) of weighted $L_1$-Wasserstein goodness-of-fit test in scenario~1 of copula mixture model~\eqref{eq:copmixmod}}
\label{fig:results_sc1}
\end{figure}
\begin{figure}[h]
    \centering
    \begin{subfigure}{0.45\textwidth}
        \centering
   		\includegraphics[scale=0.2]{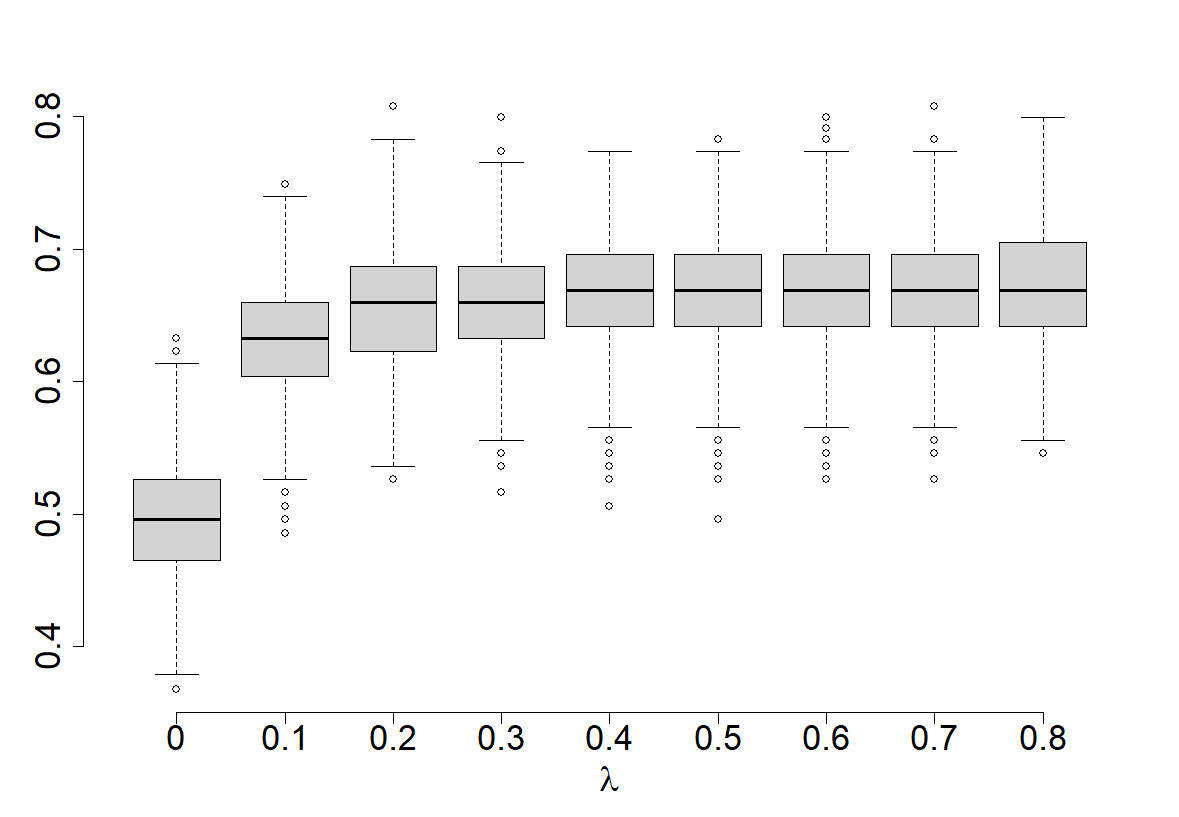}
   		\caption{Parameter estimates, $n=10\,000$ and $k=100$}
    \end{subfigure}
    \hspace{0.05\textwidth}
    \begin{subfigure}{0.45\textwidth}
        \centering
        \includegraphics[scale=0.2]{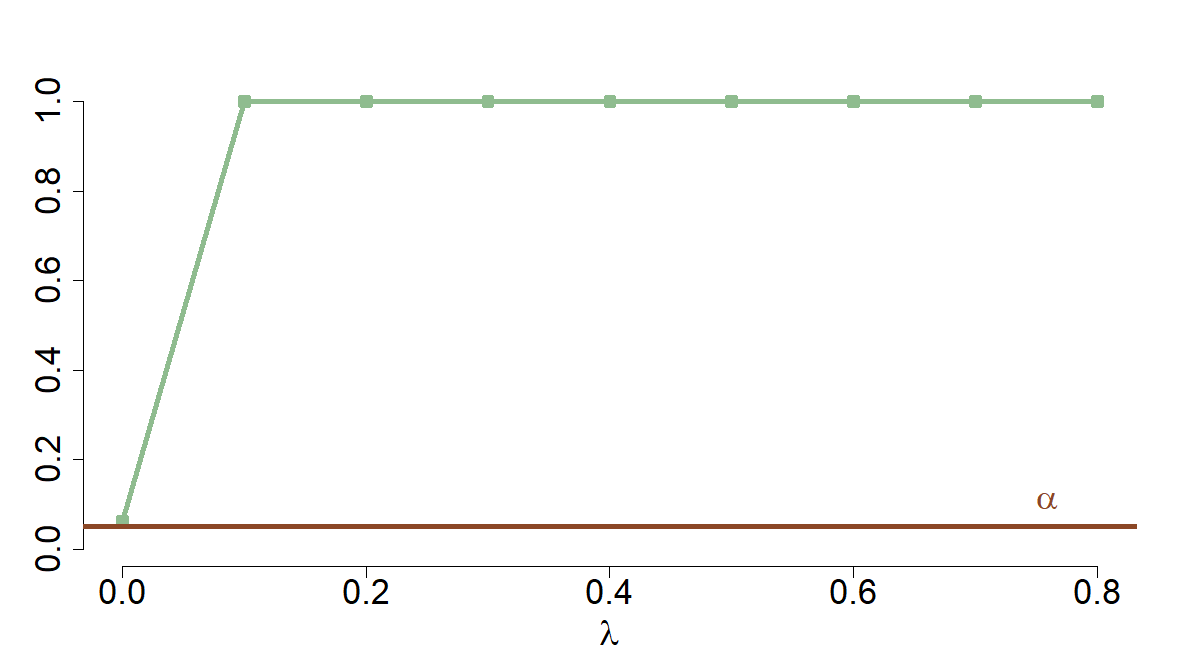}
   		\caption{Power curve, $n=10\,000$ and $k=100$}
    \end{subfigure}
    \vspace{0.5cm}
    \begin{subfigure}{0.45\textwidth}
        \centering
  			\includegraphics[scale=0.2]{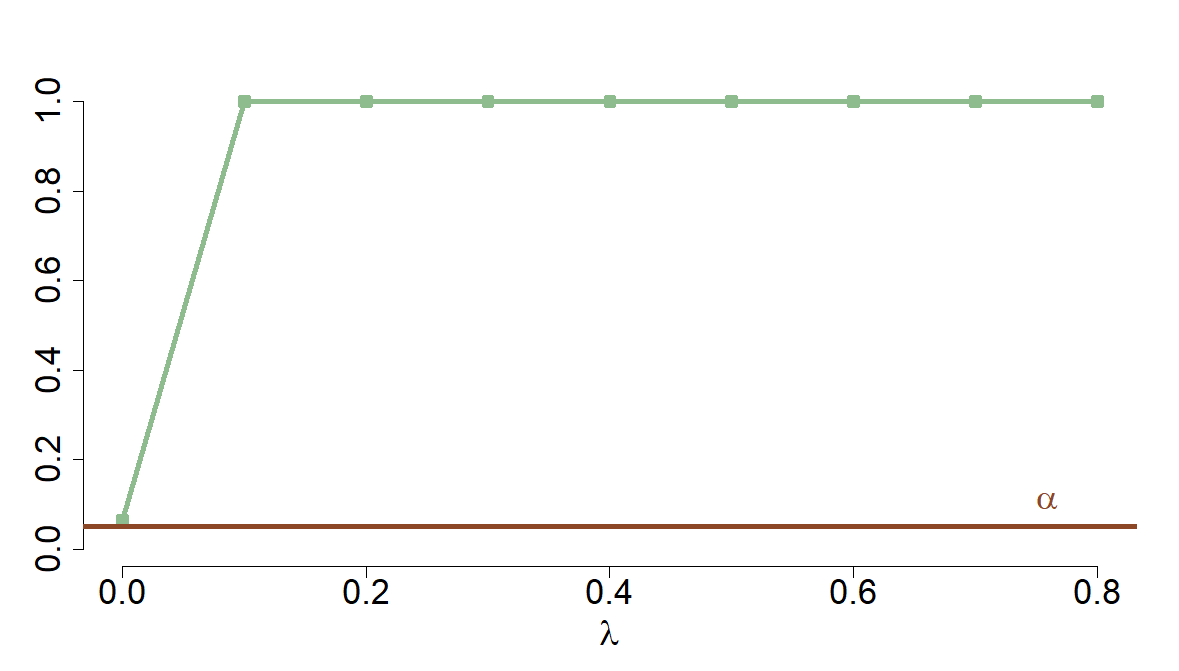}
   		\caption{Power curve, $n=3\,000$ and $k=50$}
    \end{subfigure}
    \hspace{0.05\textwidth}
    \begin{subfigure}{0.45\textwidth}
        \centering
   		\includegraphics[scale=0.2]{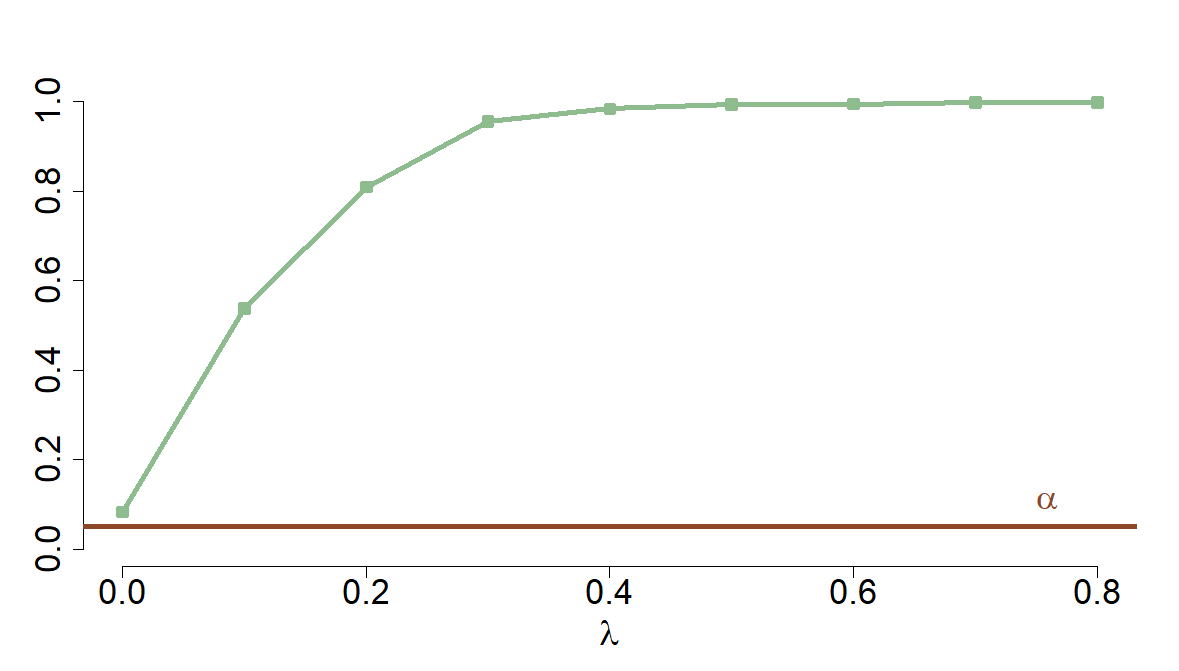}
   		\caption{Power curve, $n=500$ and $k=25$}
    \end{subfigure}
\caption{Logistic model: parameter estimates (a) and empirical power curves (b--d) of weighted $L_1$-Wasserstein goodness-of-fit test in scenario~2 of copula mixture model~\eqref{eq:copmixmod}}
\label{fig:results_sc2}
\end{figure}

\subsection{Hüsler--Reiss model}
\label{sec:simu-HR}

\paragraph*{Model.}
Also highly popular in extreme value analysis is the \emph{Hüsler--Reiss} model \cite{husler1989}, with stable tail dependence function~\eqref{eq:stdf}
\[
	\ell_r(x,y) = x \, \varPhi \lp r + (2r)^{-1} \log(x/y) \rp + y \, \varPhi \lp r + (2r)^{-1} \log(y/x) \rp, \qquad r \in (0,\infty),
\]
where $\varPhi$ is the standard normal distribution function, not to be confounded with the angular measure $\Phi$. Again, the model covers the full range from asymptotic full dependence ($r \ra 0$) to asymptotic independence ($r\ra\infty$). Straightforward computations give
\begin{multline*}
	\lambda_r(x,y) = \frac{1}{2ry} \phi \lp r + (2r)^{-1} \log(x/y) \rp \lp 0.5 - (4r^2)^{-1} \log(x/y) \rp \\ + \frac{1}{2rx} \phi \lp r + (2r)^{-1} \log(y/x) \rp \lp 0.5 - (4r^2)^{-1} \log(y/x) \rp, \qquad x,y > 0,
\end{multline*}
where $\phi$ is the standard normal density. The $L_p$-sphere angular measure $\Phi_{p,r}$, for $1 \leq p < \infty$, again satisfies $\Phi_p(\{0,\pi/2\}) = 0$, while on the interior of the sphere, its density can be computed via relation~\eqref{eq:phi_from_lambda}. In particular, Assumption~\ref{ass:smoothness} is satisfied for this model. Point~1 of Assumption~\ref{ass:modelAndEstimator} and Assumption~\ref{ass:density_reg} are also satisfied for $r_0 = 1$.

\paragraph*{Parameter estimation.}

Given a random sample $X_1, \ldots, X_n$, the unknown model parameter $r \in \param$ is again estimated by inverting the extremal coefficient $\chi = \ell(1,1)$ which equals $\chi = 2\varPhi(r)$ in this case, yielding
\begin{equation}
\label{eq:estimation_HR}
	\widehat{r}_n = \varPhi^{-1} \lp \frac{\widehat{\ell}_n(1,1)}{2} \rp.
\end{equation}
By the delta method, we deduce
\[
	\sqrt{k} \lp \widehat{r}_n - r \rp
	= \sqrt{k} \lp \widehat{\ell}_n(1,1) - \ell(1,1) \rp \cdot \frac{1}{2 \phi(r)} + \oh_{\PP}(1),
\]
as $n \ra \infty$, implying that point~2 of Assumption~\ref{ass:modelAndEstimator} is satisfied with
\[
	g \equiv \frac{1}{2 \phi(r)} \qquad \text{and} \qquad
	\sigma = \delta_{(1,1)}.
\]

\paragraph*{Data generating process.}
As for the logistic model, we generate data from the copula mixture model~\eqref{eq:copmixmod} with mixture weight $\lambda \in \{0, 0.1, 0.2, \ldots, 0.8\}$ and with alternative copula component $C_a \in \{C_1, C_2\}$ as in the same two scenarios, while 
\begin{equation}
\label{eq:simuC0HR}
	C_0(u,v) \egdef \exp \lc \varPhi \lacc 0.5 + \log \lp \frac{\log v}{\log u} \rp \racc \log u + \varPhi \lacc 0.5 + \log \lp \frac{\log u}{\log v} \rp \racc \log v \rc
\end{equation}
is the \emph{Hüsler--Reiss} copula with parameter $r_0 = 1$.

\begin{figure}[h]
\begin{subfigure}{0.49\textwidth}
    \includegraphics[width=\textwidth]{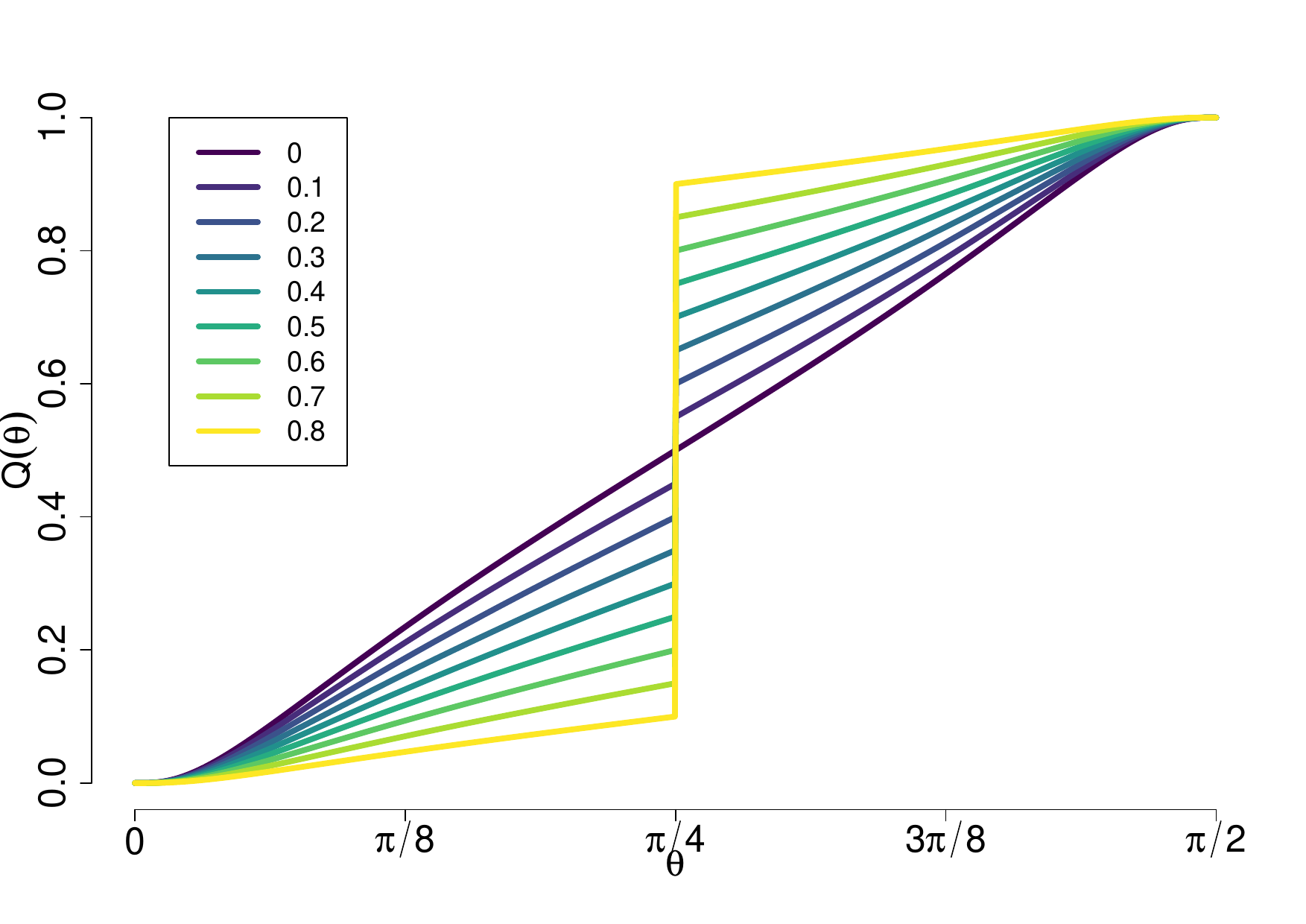}
    \caption{Scenario 1, $C_a = C_1$}
    \label{fig:sc1_HR}
\end{subfigure}
\hfill
\begin{subfigure}{0.49\textwidth}
    \includegraphics[width=\textwidth]{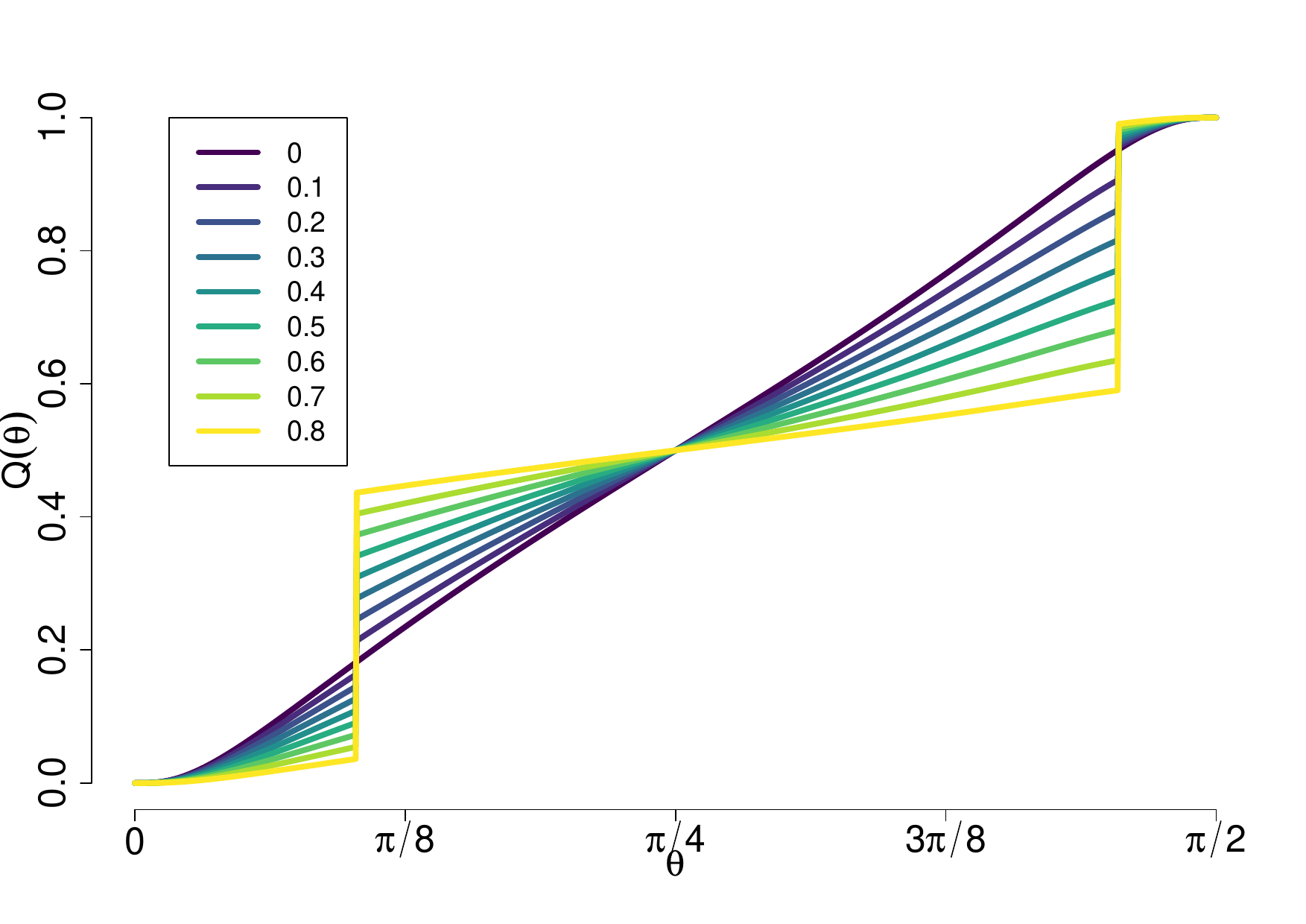}
    \caption{Scenario 2, $C_a = C_2$}
    \label{fig:sc2_HR}
\end{subfigure}
\caption{Angular distribution functions $\theta \mapsto Q_p(\theta)$ for $p=2$ of the copula mixture model~\eqref{eq:copmixmod} with Hüsler--Reiss copula $C_0$ in \eqref{eq:simuC0HR} and under two scenarios for $C_a$.}
\label{fig:sc_HR}
\end{figure}

The null hypothesis of the test~\eqref{eq:THEtest} is satisfied for the Hüsler--Reiss model with parameter $r_0 = 1$ when $\lambda = 0$ and Assumption~\ref{ass:bias} holds true if we choose $\alpha = 1$, while for $0 < \lambda < 1$, the null hypothesis is not satisfied anymore. For the two scenarios, the angular distribution functions $\theta \mapsto Q_p(\theta)$ at $p=2$ of the copula mixture model are shown in Figure~\ref{fig:sc_HR}.

\paragraph*{Results.}

The results for scenarios~1 and~2 are shown in Figures~\ref{fig:results_HR_sc1} and~\ref{fig:results_HR_sc2}, respectively. The settings are the same as those for the logistic model in Section~\ref{sec:simu-logistic}. The results are similar, even though for $\lambda = 0$ and $n = 500$, the test rejects the null hypothesis too often. In scenario~2, the estimated power remains higher than in scenario~1, despite the fact that the distribution function of the angular measure 
for the mixing component $C_2$ resembles that of the Hüsler–Reiss model with $r_0 = 1$ more.

\begin{figure}[h]
    \centering
    \begin{subfigure}{0.45\textwidth}
        \centering
   		\includegraphics[scale=0.2]{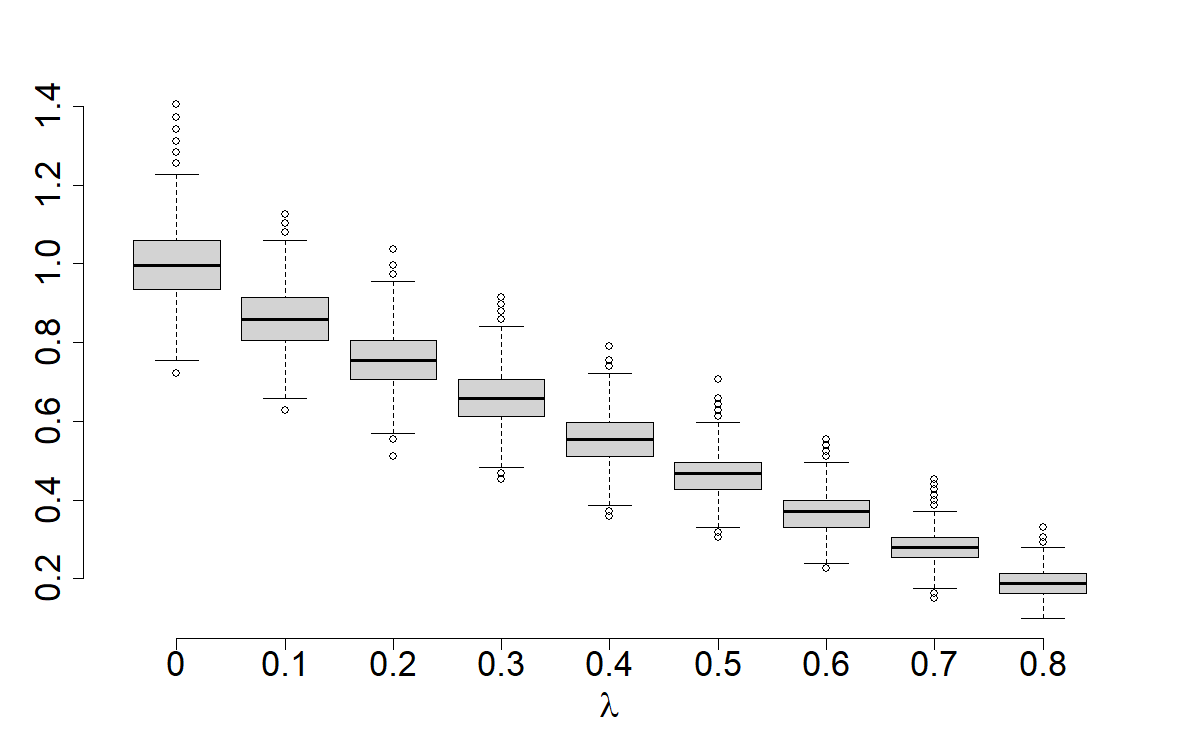}
   		\caption{Parameter estimates, $n=10\,000$ and $k=100$}
    \end{subfigure}
    \hspace{0.05\textwidth}
    \begin{subfigure}{0.45\textwidth}
        \centering
        \includegraphics[scale=0.2]{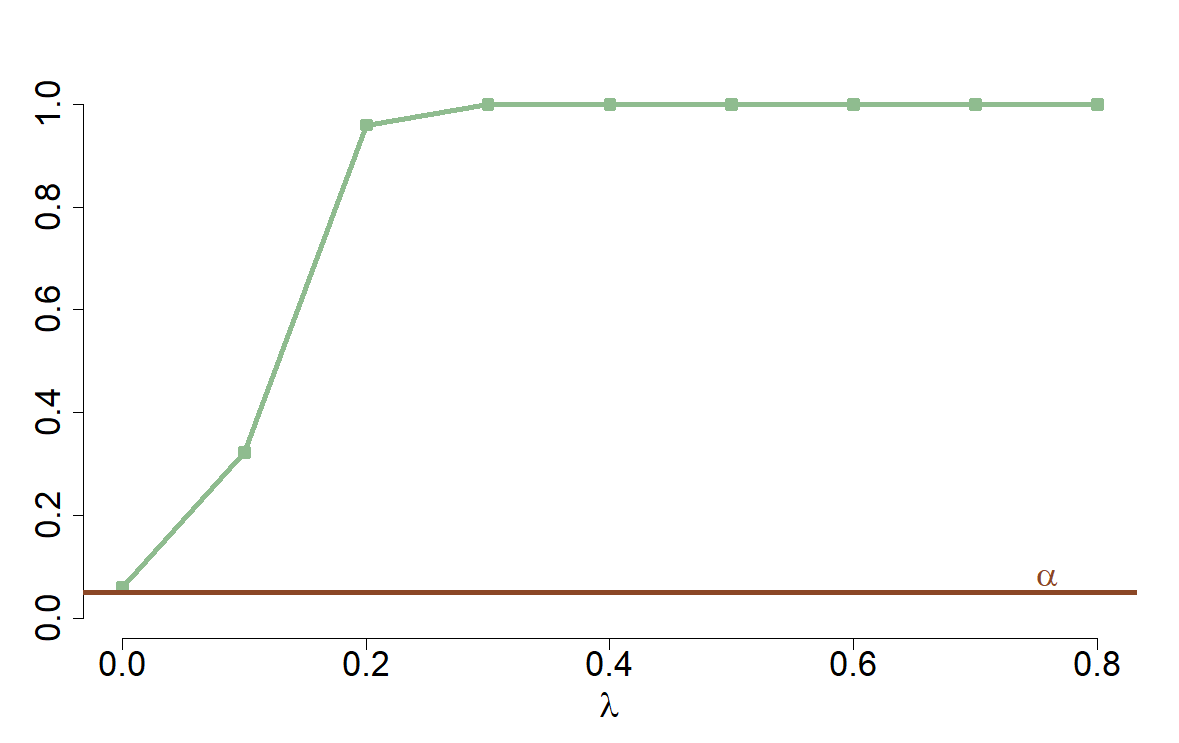}
   		\caption{Power curve, $n=10\,000$ and $k=100$}
    \end{subfigure}
    \vspace{0.5cm}
    \begin{subfigure}{0.45\textwidth}
        \centering
  			\includegraphics[scale=0.2]{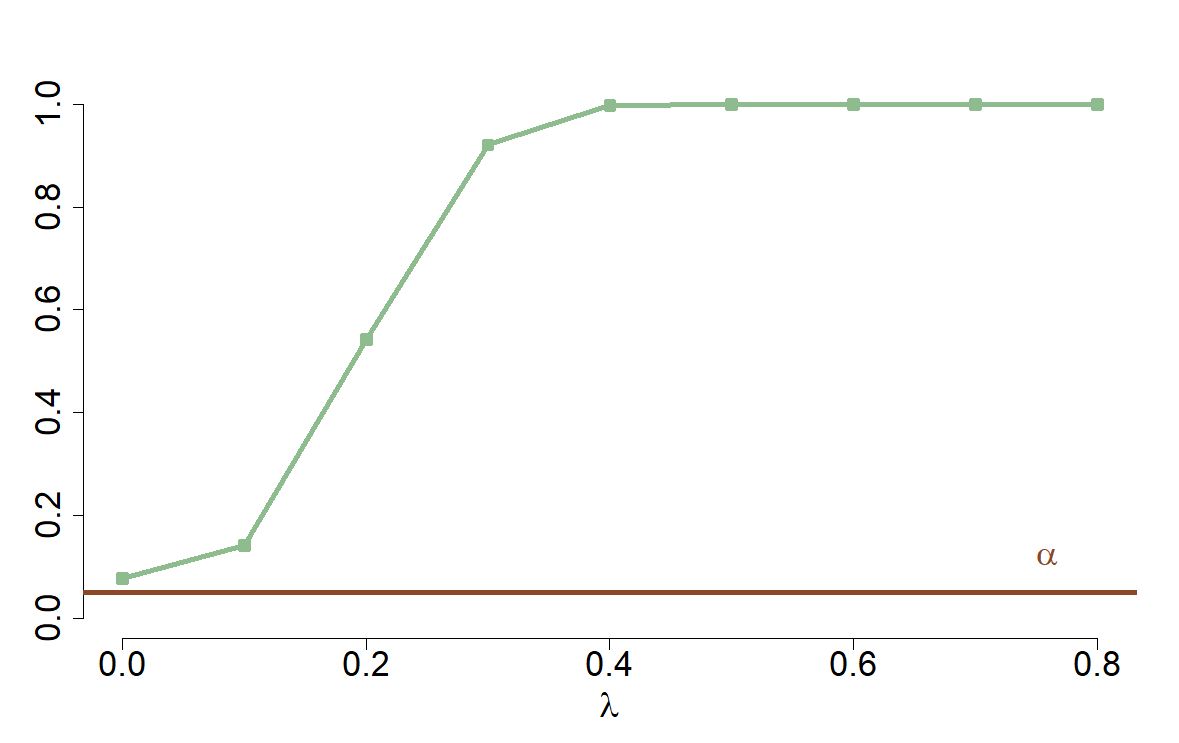}
   		\caption{Power curve at $n=3\,000$ and $k=50$}
    \end{subfigure}
    \hspace{0.05\textwidth}
    \begin{subfigure}{0.45\textwidth}
        \centering
   		\includegraphics[scale=0.2]{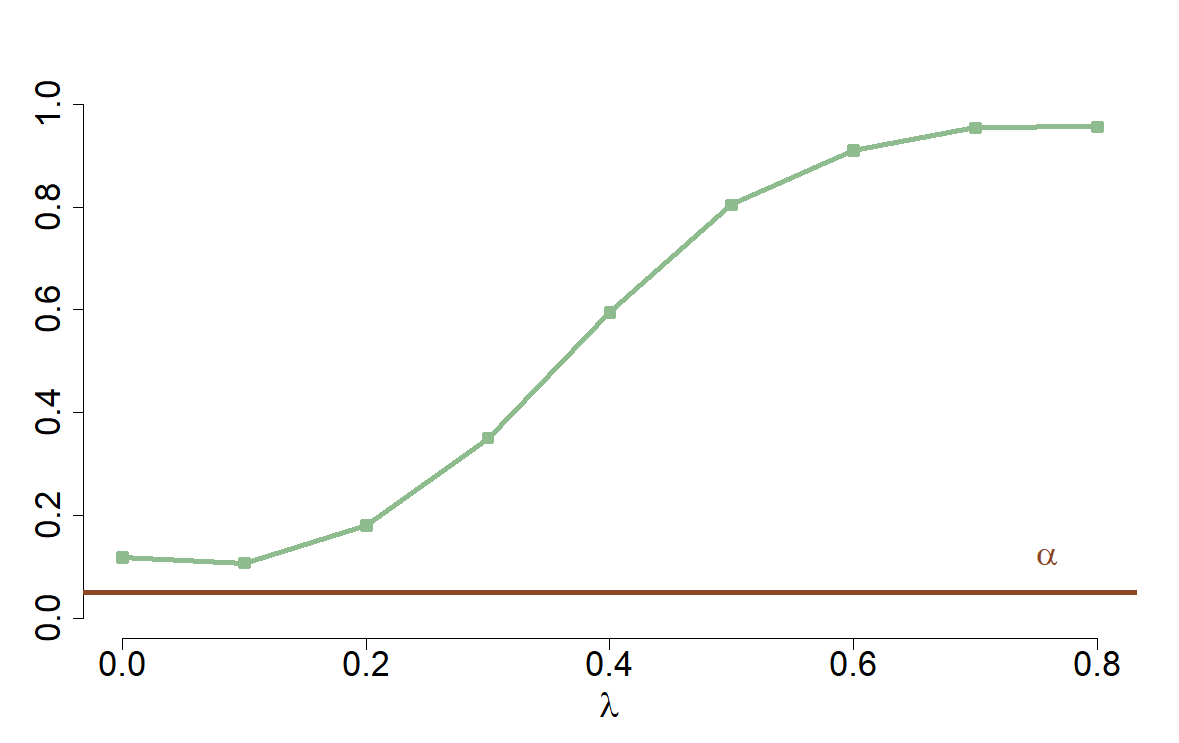}
   		\caption{Power curve at $n=500$ and $k=25$}
    \end{subfigure}
\caption{Hüsler--Reiss model: parameter estimates (a) and empirical power curves (b--d) of weighted $L_1$-Wasserstein goodness-of-fit test in scenario~1 of copula mixture model~\eqref{eq:copmixmod}}
\label{fig:results_HR_sc1}
\end{figure}
\begin{figure}[h]
    \centering
    \begin{subfigure}{0.45\textwidth}
        \centering
   		\includegraphics[scale=0.2]{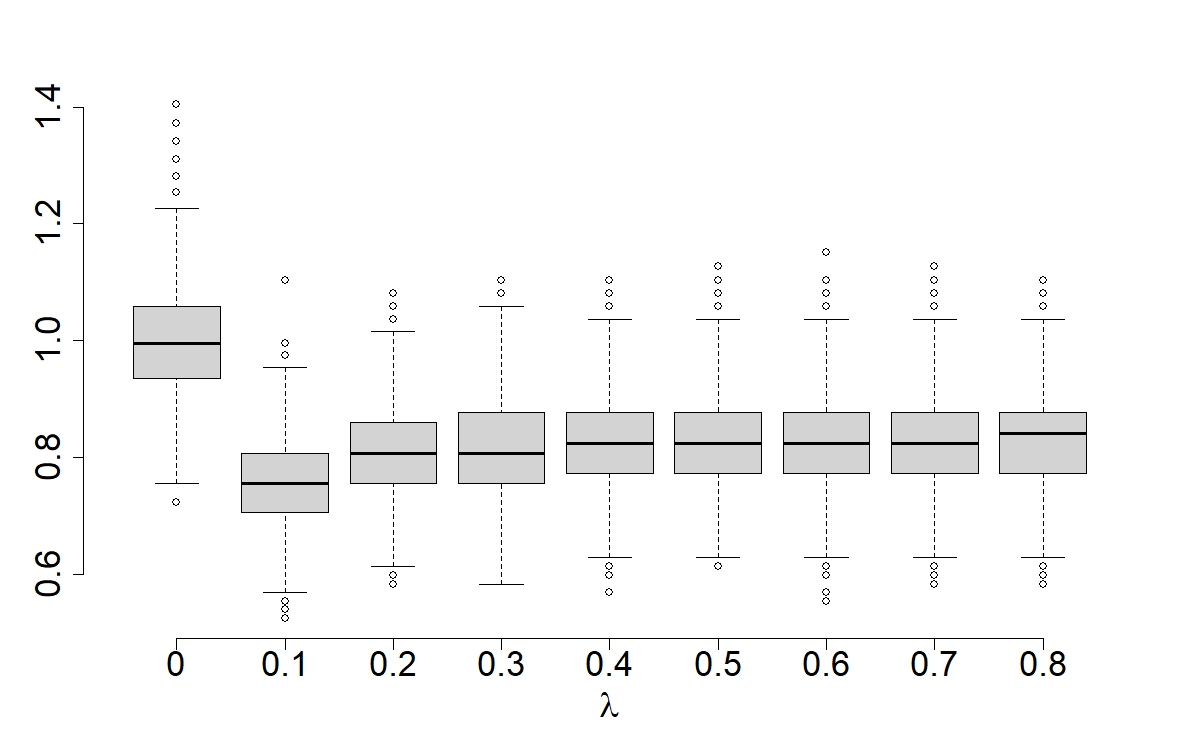}
   		\caption{Parameter estimates, $n=10\,000$ and $k=100$}
    \end{subfigure}
    \hspace{0.05\textwidth}
    \begin{subfigure}{0.45\textwidth}
        \centering
        \includegraphics[scale=0.2]{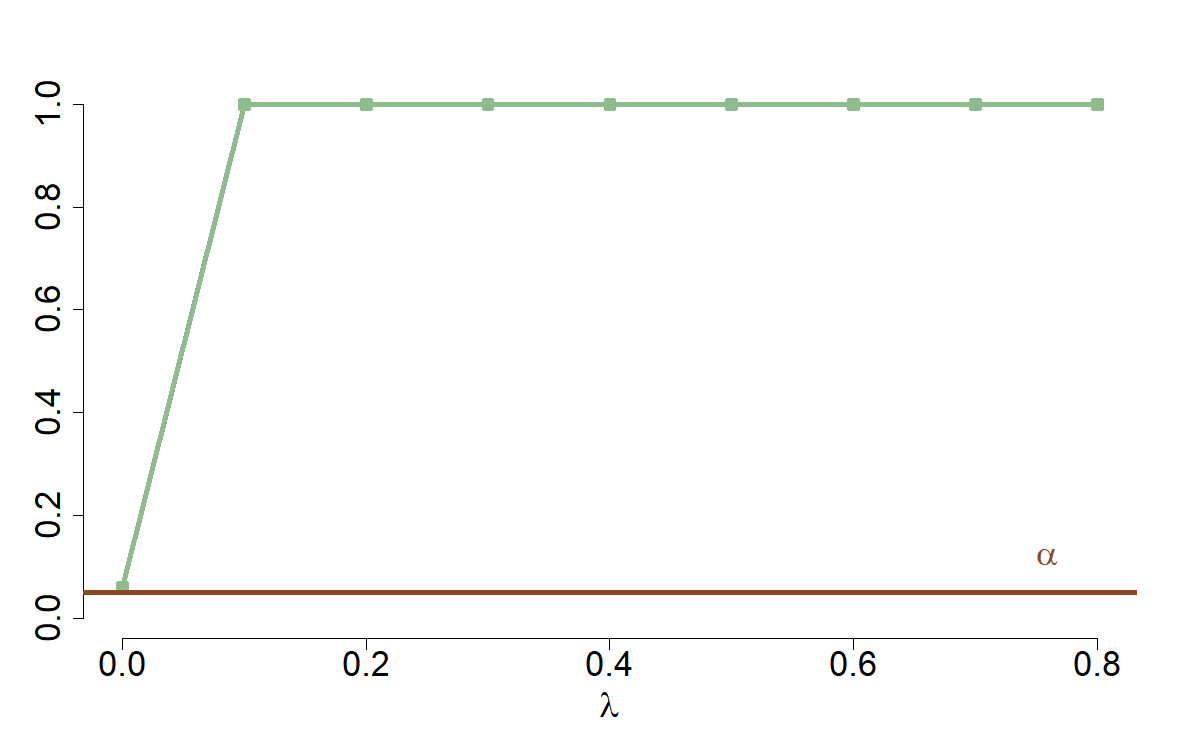}
   		\caption{Power curve, $n=10\,000$ and $k=100$}
    \end{subfigure}
    \vspace{0.5cm}
    \begin{subfigure}{0.45\textwidth}
        \centering
  			\includegraphics[scale=0.2]{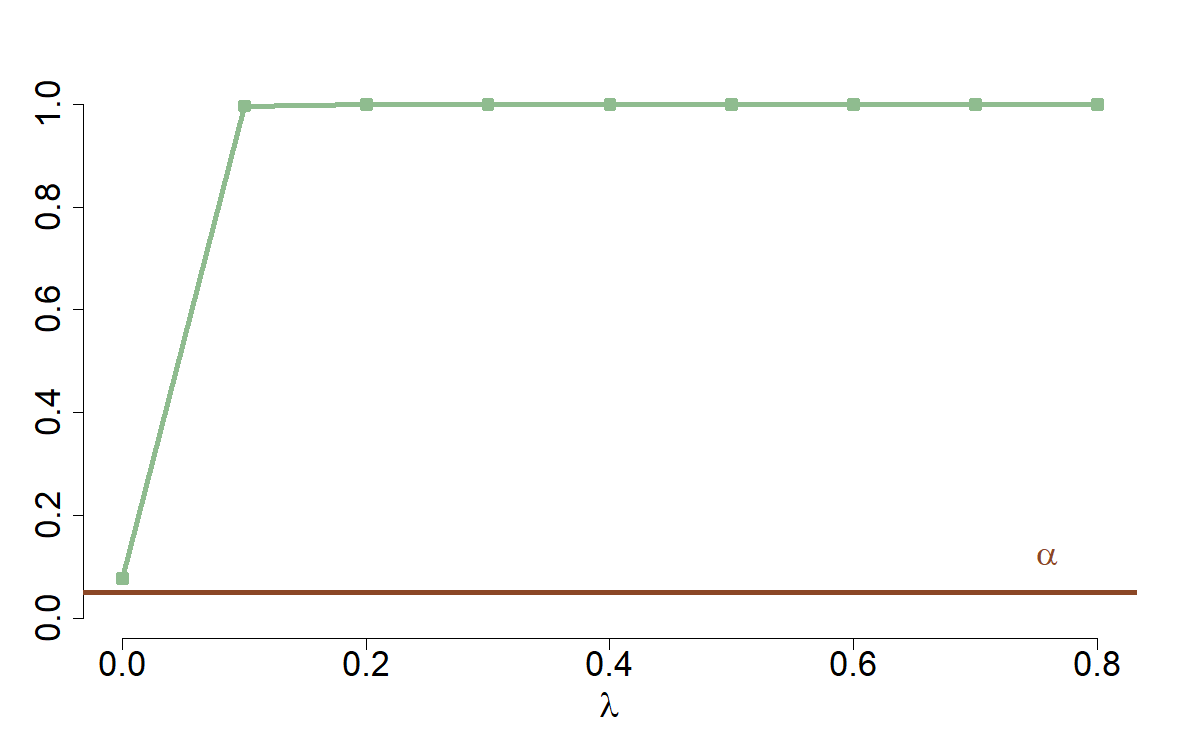}
   		\caption{Power curve, $n=3000$ and $k=50$}
    \end{subfigure}
    \hspace{0.05\textwidth}
    \begin{subfigure}{0.45\textwidth}
        \centering
   		\includegraphics[scale=0.2]{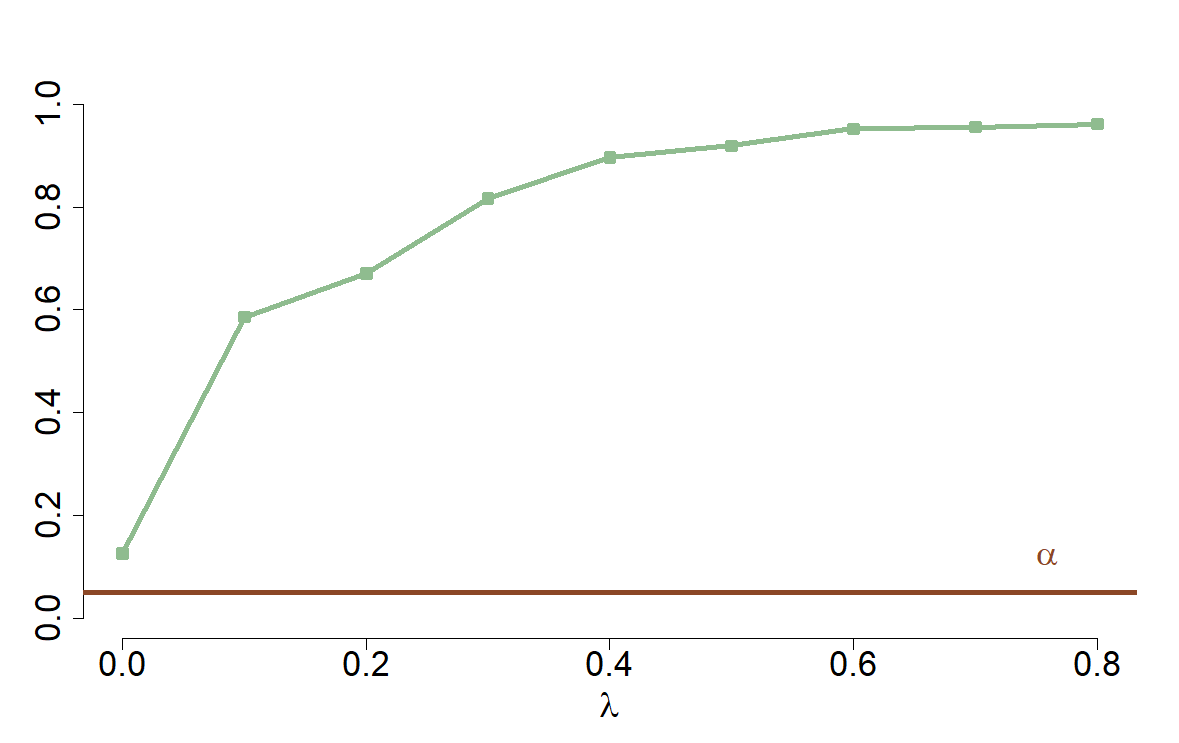}
   		\caption{Power curve, $n=500$ and $k=25$}
    \end{subfigure}
\caption{Hüsler--Reiss model: parameter estimates (a) and empirical power curves (b--d) of weighted $L_1$-Wasserstein goodness-of-fit test in scenario~1 of copula mixture model~\eqref{eq:copmixmod}}
\label{fig:results_HR_sc2}
\end{figure}

\section{Testing the Hüsler--Reiss model for river discharge data}
\label{sec:application}

We consider the \texttt{danube} dataset from the \texttt{R} package \texttt{graphicalExtremes} \cite{graphicalExtremes}. 
Average daily discharges are recorded at 31 gauging stations in the upper Danube basin covering parts of Germany, Austria and Switzerland which are often affected by flooding. To not have to deal with seasonality, we only consider the daily discharges in the summer months (June, July and August), and we use the declustered time series from the \texttt{data\_clustered} dataset. The same data have already been analyzed by several authors, e.g.~\cite{hu2024}, who estimate bivariate dependence structures using the Hüsler--Reiss model for connected pairs of stations after estimating a Markov tree on the full dataset. The topographic map of the upper Danube basin which can be found in Figure~1 of~\cite{asadi2015} is displayed in Figure~\ref{fig:danube} together with the associated flow chart of its river network.

Following the approach of~\cite{hu2024}, our goal is to assess the goodness-of-fit of the Hüsler--Reiss model to all $m = 30$ adjacent pairs in the flow chart of Figure~\ref{fig:danube_fc}. Naturally, this leads to multiple testing challenges due to the many dependent tests involved. The p-values can be adjusted using either the classical Bonferroni correction or the False Discovery Rate (FDR) paradigm of Benjamini and Hochberg~\cite{benjamini1995}, adapted for dependent tests in~\cite{benjamini2001}. The sample size is $n = 428$, and we set $k = \sqrt{n}$ as we observed in Section~\ref{sec:simu-HR} that this typically leads to better results for the nonparametric estimation of $Q_p$, while still providing a reasonable estimate of the nuisance parameter $r$, with respect to which the quantile values $\quantile_{r}(1-\alpha)$ do not vary too quickly. Ideally, we would like to use two different values of $k$ for the nonparametric and parametric estimators in the test statistic $T_n$, as a larger $k$ often improves the accuracy of the parameter estimators. This, however, is a topic for future research.
We estimate the p-values $P_j$ corresponding to each of the $j \in \{1,\ldots,m\}$ tests by
\[
	\widehat{P}_j = \frac{1}{B} \sum_{b=1}^{B} \1 \lacc T_n > L_{\widehat{r}_n}^{(b)} \racc
\]
for $B=4\,000$, where $L_{\widehat{r}_n}^{(b)}$ is drawn from the distribution of $L_r$ in Theorem~\ref{thm:asymptoticTS} at $r=\widehat{r}_n$, computed in the same way as in Section~\ref{sec:simu-HR}. Although alternative parameter estimators could be used, this would result in more complex expressions for $I_{g,\sigma,r}$ in the definition of $T_n$, and we aimed to keep the methodology simple. Results are shown in Table~\ref{tab:p-values}.

\begin{figure}[H]
	\begin{subfigure}{0.4\textwidth}
		\includegraphics[width=\textwidth]{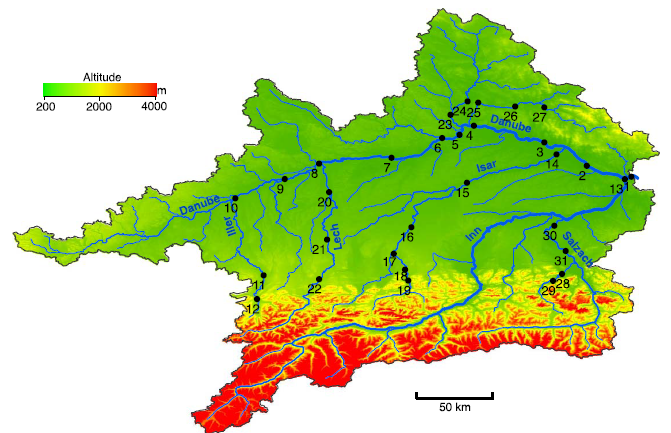}
		\caption{Topographic map from~\cite{asadi2015}}
		\label{fig:danube_topography}
	\end{subfigure}
	\hfill
	\begin{subfigure}{0.4\textwidth}
		\includegraphics[width=\textwidth]{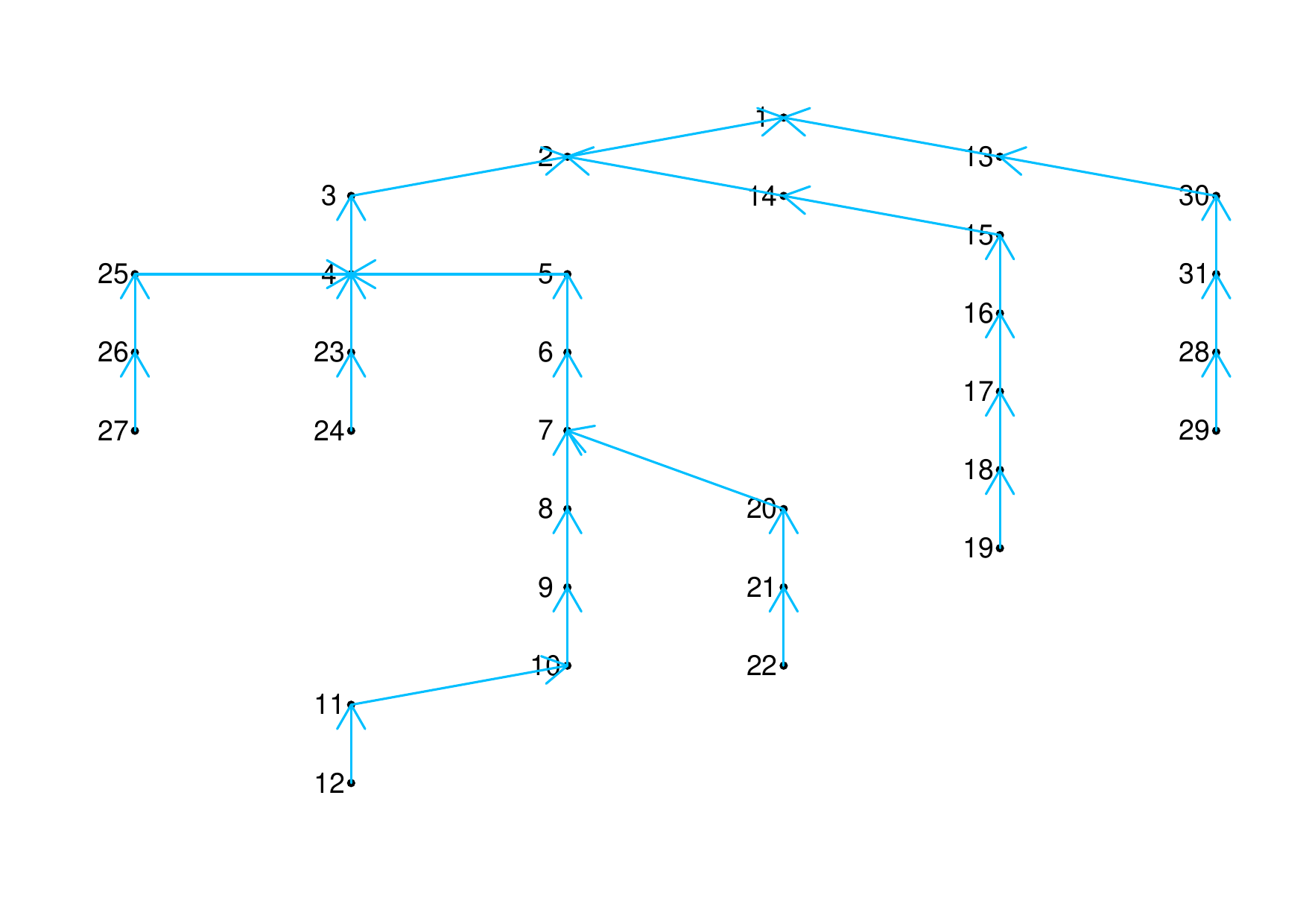}
		\caption{Flow chart}
		\label{fig:danube_fc}
	\end{subfigure}
	\caption{The upper Danube basin}
	\label{fig:danube}
\end{figure}

%
%

\begin{table}[h]
\centering
\begin{minipage}{.5\linewidth}
\centering
\begin{tabular}{r@{--}lr}
  \toprule
  \multicolumn{2}{l}{edge} & p-value \\ 
  \midrule
  2 & 1 & 0.02500 \\ 
  13 & 1 & 0.60825 \\ 
  3 & 2 & 0.75750 \\ 
  14 & 2 & 0.00000 \\ 
  4 & 3 & 0.91175 \\ 
  5 & 4 & 0.34800 \\ 
  23 & 4 & 0.00150 \\ 
  25 & 4 & 0.43725 \\ 
  6 & 5 & 0.67950 \\ 
  7 & 6 & 0.64925 \\ 
  8 & 7 & 0.40375 \\ 
  20 & 7 & 0.76375 \\ 
  9 & 8 & 0.51550 \\ 
  10 & 9 & 0.85875 \\ 
  11 & 10 & 0.16925 \\ 
  \hline
\end{tabular}
\end{minipage}%
\begin{minipage}{.5\linewidth}
\centering
\begin{tabular}{r@{--}lr}
  \toprule
	\multicolumn{2}{l}{edge} & p-value \\ 
	\midrule
 12 & 11 & 0.05425 \\ 
  30 & 13 & 0.07450 \\ 
  15 & 14 & 0.49675 \\ 
  16 & 15 & 0.84725 \\ 
  17 & 16 & 0.84875 \\ 
  18 & 17 & 0.25175 \\ 
  19 & 18 & 0.15525 \\ 
  21 & 20 & 0.17775 \\ 
  22 & 21 & 0.15525 \\ 
  24 & 23 & 0.61300 \\ 
  26 & 25 & 0.72550 \\ 
  27 & 26 & 0.78075 \\ 
  29 & 28 & 0.19350 \\ 
  31 & 28 & 0.57775 \\ 
  31 & 30 & 0.33550 \\ 
   \hline
\end{tabular}
\end{minipage}
\caption{p-values of the weighted $L_1$-Wasserstein goodness-of-fit test of the bivariate Hüsler--Reiss model for the 30 pairs of adjacent gauging stations in the Danube data}
\label{tab:p-values}
\end{table}

With the Bonferroni correction, the Hüsler--Reiss model would only be rejected for edges 14--2 and 23--4, using the standard significance level of $\alpha = 5\%$. In fact, even at the individual test level, a large proportion of these p-values are significantly higher than the rejection threshold $\alpha$. In Figure~\ref{fig:2-14}, we compare the nonparametric estimate of the angular distribution function $\theta \in [0, \pi/2] \mapsto Q_2(\theta)$ with its estimated counterpart under the null hypothesis of the Hüsler--Reiss model for edges 14--2 and 23--4. The asymmetry in the angular distribution function observed in the data for these edges cannot be captured by the assumed Hüsler--Reiss model. This leads to the rejection of the null hypothesis and suggests that this parametric form may not be well-suited for these pairs of stations.

\begin{figure}[H]
	\begin{subfigure}{0.49\textwidth}
		\includegraphics[width=\textwidth]{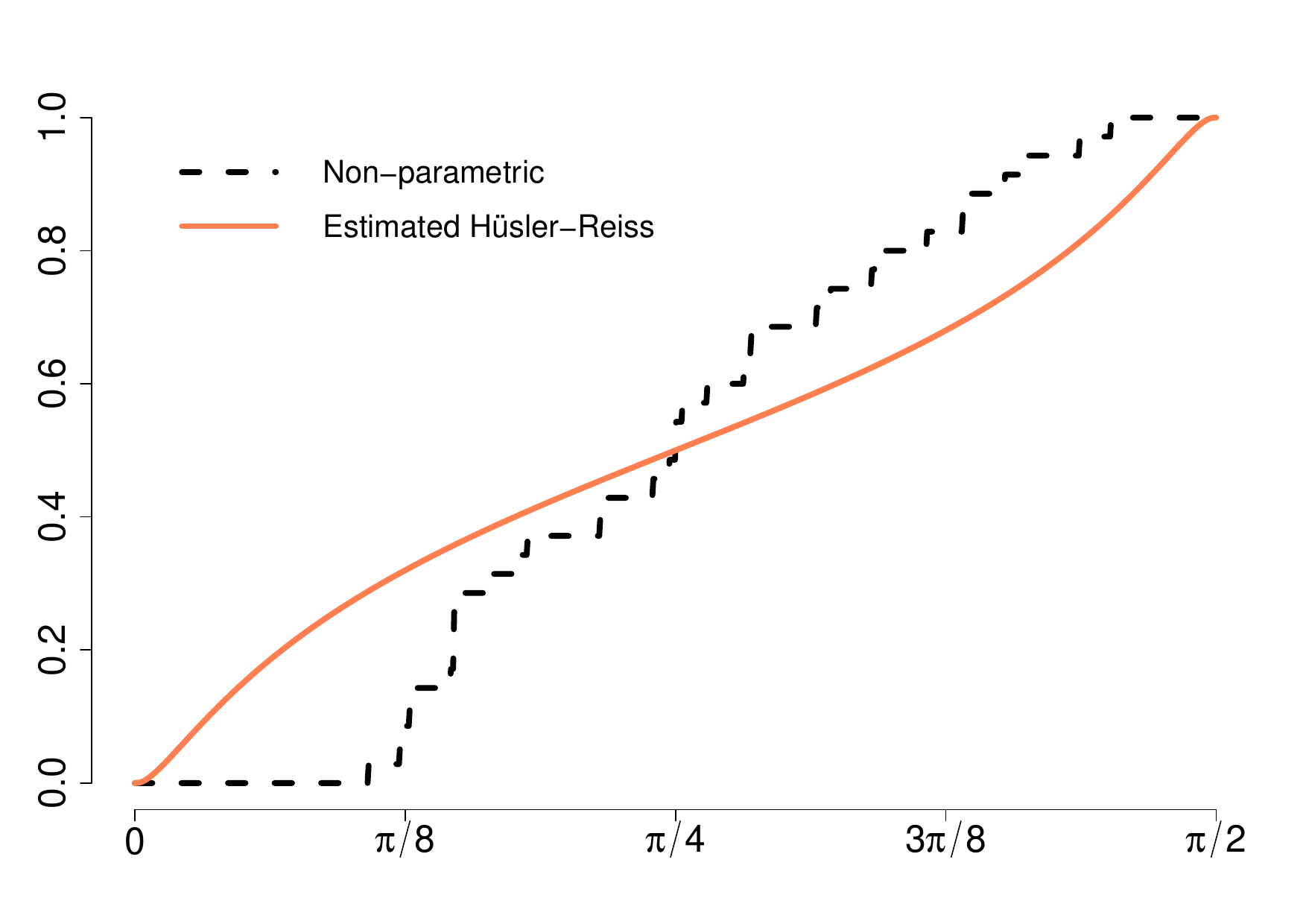}
	\end{subfigure}
	\hfill
	\begin{subfigure}{0.49\textwidth}
		\includegraphics[width=\textwidth]{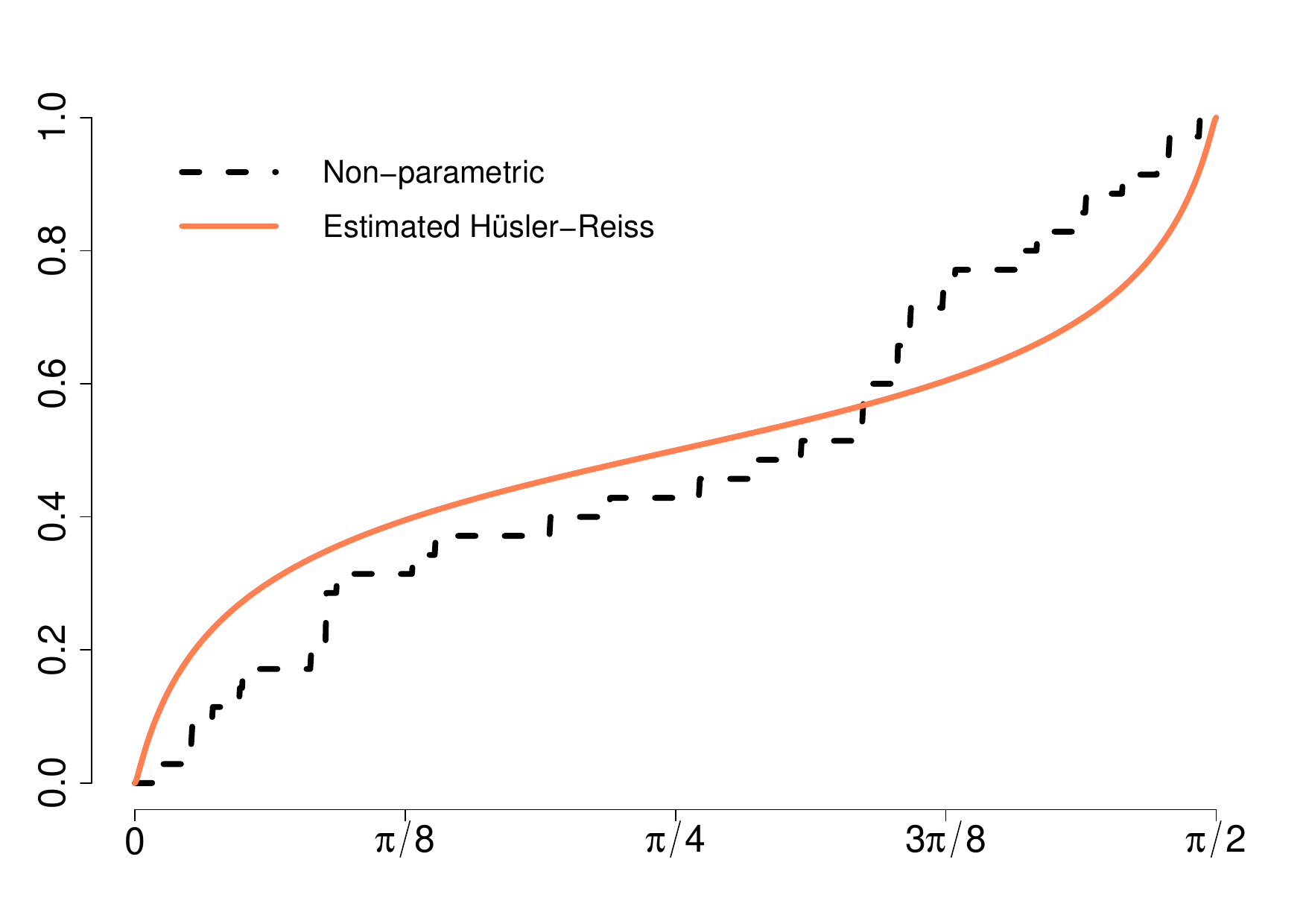}
	\end{subfigure}
	\caption{Danube data: estimated angular distributions functions $\theta \in [0,\pi/2] \mapsto Q_2(\theta)$ for edges 14--2 and 23--4 of gauging stations}
	\label{fig:2-14}
\end{figure}

\section{Conclusion}
\label{sec:conclusion}

We propose a new inference method based on the angular measure to evaluate whether the asymptotic dependence structure of a bivariate dataset aligns with a given parametric model. Unlike previous approaches, our method involves sampling directly from the asymptotic distribution of the test statistic under the estimated parameter value. Leveraging continuity of the asymptotic quantile function, this leads to a detailed proof of the method's consistency. The finite-sample performance is also compelling when compared to other techniques, and the ease of computing the test statistic makes it a strong candidate for practical applications. Although the method is currently limited to bivariate data, many multivariate extreme value methods rely on multiple bivariate analyses, and we demonstrate how this can be done on a real dataset. 

Extending the procedure to the general multivariate case is certainly of interest but requires a deeper understanding of the asymptotic behavior of the empirical angular measure, about which little is currently known. A promising approach would be to focus on discrete angular measures, for which more concrete results might be achievable. Using the Wasserstein distance as a test statistic to quantify the discrepancy between $\tQ_p$ and $Q_{p,\widehat{r}_n}$ could be effective, leveraging results from \cite{sommerfeld2018}, where the Wasserstein distance is shown to be Hadamard differentiable with respect to its arguments for measures with discrete supports.

\section*{Declarations}

\subsection*{Funding}

The research of Stéphane Lhaut was supported by the Fonds National de la Recherche Scientifique (FNRS, Belgium) within the framework of a FRIA grant (number 1.E114.23).

\subsection*{Competing interests}
The authors have no competing interests to declare that are relevant to the content of this article.

\subsection*{Authors contribution}
The two authors contributed to the analysis of the results and to the writing of the manuscript.

\bibliographystyle{acm}
\bibliography{references.bib}

\begin{thebibliography}{10}

\bibitem{asadi2015}
{\sc Asadi, P., Davison, A.~C., and Engelke, S.}
\newblock Extremes on river networks.
\newblock {\em The Annals of Applied Statistics 9}, 4 (2015), 2023--2050.

\bibitem{bucher2013}
{\sc Bücher, A., and Dette, H.}
\newblock Multiplier bootstrap of tail copulas with applications.
\newblock {\em Bernoulli 19}, 5A (2013), 1655–--1687.

\bibitem{beirlant05}
{\sc Beirlant, J., Goegebeur, Y., Segers, J., and Teugels, J.}
\newblock {\em Statistics of Extremes: Theory and Applications}.
\newblock Wiley Series in Probability and Statistics, 2005.

\bibitem{benjamini1995}
{\sc Benjamini, Y., and Hochberg, Y.}
\newblock Controlling the false discovery rate: a practical and powerful
  approach to multiple testing.
\newblock {\em Journal of the Royal Statistical Society Series B 57}, 1 (1995),
  289--300.

\bibitem{benjamini2001}
{\sc Benjamini, Y., and Yejutieli, D.}
\newblock The control of the false discovery rate in multiple testing under
  dependency.
\newblock {\em The Annals of Statistics 29}, 4 (2001), 1165--1188.

\bibitem{can2015}
{\sc Can, S.~U., Einmahl, J. H.~J., Khmaladze, E.~V., and Laeven, R. J.~A.}
\newblock Asymptotically distribution-free goodness-of-fit testing for tail
  copulas.
\newblock {\em The Annals of Statistics 43}, 2 (2015), 878--902.

\bibitem{clemencon2023}
{\sc Clémençon, S., Jalalzaï, H., Lhaut, S., Sabourin, A., and Segers, J.}
\newblock Concentration bounds for the empirical angular measure with
  statistical learning applications.
\newblock {\em Bernoulli 29}, 4 (2023), 2797--2827.

\bibitem{coles1991}
{\sc Coles, S.~G., and Tawn, J.~A.}
\newblock Modelling extreme multivariate events.
\newblock {\em Journal of the Royal Statistical Society. Series B 53}, 2
  (1991), 377--392.

\bibitem{cooley2010}
{\sc Cooley, D., Davis, R.~A., and Naveau, P.}
\newblock The pairwise beta distribution: A flexible parametric multivariate
  model for extremes.
\newblock {\em Journal of Multivariate Analysis 101}, 9 (2010), 2103--2117.

\bibitem{cooley2012}
{\sc Cooley, D., Davis, R.~A., and Naveau, P.}
\newblock Approximating the conditional density given large observed values via
  a multivariate extremes framework, with application to environmental data.
\newblock {\em The Annals of Applied Statistics 6}, 4 (2012), 1406--1429.

\bibitem{decarvalho2013}
{\sc de~Carvalho, M., Oumow, B., Segers, J., and Warchol, M.}
\newblock A euclidean likelihood estimator for bivariate tail dependence.
\newblock {\em Communication in Statistics -- Theory and Methods 42}, 7 (2013),
  1176--1192.

\bibitem{dehaan1998}
{\sc de~Haan, L., and de~Ronde, J.}
\newblock Sea and wind: Multivariate extremes at work.
\newblock {\em Extremes 1}, 1 (1998), 7--45.

\bibitem{dHF2006}
{\sc de~Haan, L., and Ferreira, A.}
\newblock {\em Extreme Value Theory: An Introduction}.
\newblock Springer Series in Operations Research and Financial Engineering.
  Springer-Verlag New York, 2006.

\bibitem{dehaan2008}
{\sc de~Haan, L., Neves, C., and Peng, L.}
\newblock Parametric tail copula estimation and model testing.
\newblock {\em Journal of Multivariate Analysis 99}, 6 (2008), 1260--1275.

\bibitem{dehaan1977}
{\sc de~Haan, L., and Resnick, S.}
\newblock Limit theory for multivariate sample extremes.
\newblock {\em Zeitschrift für Wahrscheinlichkeitstheorie und Verwandte
  Gebiete 40\/} (1977), 317--337.

\bibitem{dehaan1999}
{\sc de~Haan, L., and Sinha, A.~K.}
\newblock Estimating the probability of a rare event.
\newblock {\em The Annals of Statistics 27}, 2 (1999), 732--759.

\bibitem{einmahl1997}
{\sc Einmahl, J. H.~J.}
\newblock Poisson and {Gaussian} approximation of weighted local empirical
  processes.
\newblock {\em Stochastic Processes and their Applications 70\/} (1997),
  31--58.

\bibitem{einmahl2013}
{\sc Einmahl, J. H.~J., de~Haan, L., and Krajina, A.}
\newblock Estimating extreme bivariate quantile regions.
\newblock {\em Extremes 16\/} (2013), 121--145.

\bibitem{einmahl2001}
{\sc Einmahl, J. H.~J., de~Haan, L., and Piterbarg, V.~I.}
\newblock Nonparametric estimation of the spectral measure of an extreme value
  distribution.
\newblock {\em The Annals of Statistics 29}, 5 (2001), 1401--1423.

\bibitem{einmahl2008}
{\sc Einmahl, J. H.~J., Krajina, A., and Segers, J.}
\newblock A method of moments estimator of tail dependence.
\newblock {\em Bernoulli 14}, 4 (2008), 1003--1026.

\bibitem{einmahl2012}
{\sc Einmahl, J. H.~J., Krajina, A., and Segers, J.}
\newblock An m-estimator for tail dependence in arbitrary dimensions.
\newblock {\em The Annals of Statistics 40}, 3 (2012), 1764--1793.

\bibitem{einmahl2009maximum}
{\sc Einmahl, J. H.~J., and Segers, J.}
\newblock Maximum empirical likelihood estimation of the spectral measure of an
  extreme-value distribution.
\newblock {\em The Annals of Statistics 37}, 5B (2009), 2953--2989.

\bibitem{graphicalExtremes}
{\sc Engelke, S., Hitz, A.~S., Gnecco, N., and Hentschel, M.}
\newblock {\em {graphicalExtremes: Statistical Methodology for Graphical
  Extreme Value Models}}, 2024.
\newblock R package version 0.3.2,
  \url{https://CRAN.R-project.org/package=graphicalExtremes}.

\bibitem{engelke2015}
{\sc Engelke, S., Malinowski, A., Kabluchko, Z., and Schlather, M.}
\newblock Estimation of hüsler–-reiss distributions and brown-–resnick
  processes.
\newblock {\em Journal of the Royal Statistical Society. Series B 77}, 1
  (2015), 239--265.

\bibitem{goix2017}
{\sc Goix, N., Sabourin, A., and Clémençon, S.}
\newblock Sparse representation of multivariate extremes with applications to
  anomaly detection.
\newblock {\em Journal of Multivariate Analysis 161\/} (2017), 12--31.

\bibitem{gumbel1960}
{\sc Gumbel, E.~J.}
\newblock Bivariate exponential distributions.
\newblock {\em Journal of the American Statistical Association 55}, 292 (1960),
  698--707.

\bibitem{husler1989}
{\sc Hüsler, J., and Reiss, R.-D.}
\newblock Maxima of normal random vectors: Between independence and complete
  dependence.
\newblock {\em Statistics and Probability Letters 7}, 4 (1989), 283--286.

\bibitem{hu2024}
{\sc Hu, S., Peng, Z., and Segers, J.}
\newblock Modelling multivariate extreme value distributions via markov trees.
\newblock {\em Scandinavian Journal of Statistics 51}, 2 (2024), 760--800.

\bibitem{jalalzai2018}
{\sc Jalalzai, H., Cl\'{e}men\c{c}on, S., and Sabourin, A.}
\newblock On binary classification in extreme regions.
\newblock In {\em Advances in Neural Information Processing Systems\/} (2018),
  S.~Bengio, H.~Wallach, H.~Larochelle, K.~Grauman, N.~Cesa-Bianchi, and
  R.~Garnett, Eds., vol.~31, Curran Associates, Inc.

\bibitem{janssen2020kmeans}
{\sc Jan{\ss}en, A., and Wan, P.}
\newblock {$k$-means clustering of extremes}.
\newblock {\em Electronic Journal of Statistics 14}, 1 (2020), 1211--1233.

\bibitem{lhaut2023asymptotic}
{\sc Lhaut, S., and Segers, J.}
\newblock An asymptotic expansion of the empirical angular measure for
  bivariate extremal dependence.
\newblock In {\em Recent Advances in Econometrics and Statistics},
  M.~Barigozzi, S.~Hörmann, and D.~Paindaveine, Eds. Springer Cham, November
  2024.

\bibitem{morton1996}
{\sc Morton, I.~D., and Bowers, J.~A.}
\newblock Extreme value analysis in a multivariate offshore environment.
\newblock {\em Applied Ocean Research 18}, 6 (1996), 303--317.

\bibitem{owen1991empirical}
{\sc Owen, A.}
\newblock Empirical likelihood for linear models.
\newblock {\em The Annals of Statistics 19}, 4 (1991), 1725--1747.

\bibitem{panaretos2019}
{\sc Panaretos, V.~M., and Zemel, Y.}
\newblock Statistical aspects of wasserstein distances.
\newblock {\em Annual Review of Statistics and Its Application 6\/} (2019),
  405--431.

\bibitem{resnick1987}
{\sc Resnick, S.~I.}
\newblock {\em Extreme Values, Regular Variation, and Point Processes}.
\newblock Springer Series in Operations Research and Financial Engineering.
  Springer-Verlag, New York, 1987.

\bibitem{rocco2014}
{\sc Rocco, M.}
\newblock Extreme value theory in finance: a survey.
\newblock {\em Journal of Economic Surveys 28}, 1 (2014), 82--108.

\bibitem{sommerfeld2018}
{\sc Sommerfeld, M., and Munk, A.}
\newblock Inference for empirical wasserstein distances on finite spaces.
\newblock {\em Journal of the Royal Statistical Society Series B 80}, 1 (2018),
  219--238.

\bibitem{tawn1988}
{\sc Tawn, J.~A.}
\newblock Bivariate extreme value theory: Models and estimation.
\newblock {\em Biometrika 75}, 3 (1988), 397--415.

\bibitem{VVV1998}
{\sc van~der Vaart, A.}
\newblock {\em Asymptotic Statistics}.
\newblock Cambridge Series in Statistical and Probabilistic Mathematics.
  Cambridge University Press, 1998.

\bibitem{VVV1996}
{\sc van~der Vaart, A., and Wellner, J.}
\newblock {\em Weak Convergence and Empirical Processes}.
\newblock Springer Series in Statistics. Springer-Verlag New York, 1996.

\bibitem{Villani2003}
{\sc Villani, C.}
\newblock {\em Topics in Optimal Transportation}, vol.~58 of {\em Graduate
  studies in mathematics}.
\newblock American Mathematical Society, 2003.

\bibitem{Villani2009}
{\sc Villani, C.}
\newblock {\em Optimal Transport: Old and New}, vol.~338 of {\em Grundlehren
  der mathematischen Wissenschaften}.
\newblock Springer-Verlag, Berlin, 2009.

\end{thebibliography}

\newpage
\appendix

\section{Proof of Corollary~\ref{cor:asymptoticsMEL}}
\label{sec:app:expansionMEL}

The second part is just an application of the functional delta method, i.e., Theorem~20.8 in~\cite{VVV1998}. The only thing we need to show is Hadamard differentiability of the map $\Psi$, that is, let $h \in C_0([0,\pi/2])$ and $(h_t)_{t>0}$ be any sequence such that $\sup_{\theta \in [0,\pi/2]} |h_t(\theta) - h(\theta)| \ra 0$ as $t \ra 0$, then
\begin{equation}
\label{eq:hadamard}
	\sup_{\theta \in [0,\pi/2]} \labs \frac{(\Psi(Q_p + th_t))(\theta) - (\Psi(Q_p))(\theta)}{t} - (\Psi'_{Q_p}[h])(\theta) \rabs \ra 0, \qquad t \ra 0.
\end{equation}
Noting the useful relation
\[
	\int_0^\theta f(\psi) \d F(\psi) = F(\theta)f(\theta) - \int_0^\theta F(\varphi) f'(\varphi) \d\varphi,
\]
the candidate for the derivative is obtained by computing
\[
	(\Psi'_{Q_p}[h])(\theta) = \frac{\d}{\d t} (\Psi(Q_p + th))(\theta) \Big|_{t = 0},
\]
which leads to the indicated formula. Showing that~\eqref{eq:hadamard} holds with this candidate is an easy exercise and completes the proof.

\section{Proof of Theorem~\ref{thm:expansionSTDF}}
\label{sec:app:expansionSTDF}

To facilitate the notation later in the proof, we introduce the following functions: for $j \in \{1,\ldots, d\}$,
\begin{enumerate}[label=\textbullet]
	\item $Q_{nj}(u_j) := U_{\lceil nu_j \rceil:n,j}, \qquad u \in [0,1]^d$ ;
	\item $S_{nj}(x_j) := \frac{n}{k} Q_{nj} \lp \frac{k}{n} x_j \rp, \quad S_n(x) := (S_{n1}(x_1),\ldots, S_{nd}(x_d)), \quad x \in [0,\infty)^d$ ;
\end{enumerate}
where $U_{1:n,j} \leq U_{2:n,j} \leq \ldots \leq U_{n:n,j}$ are the order statistics of the marginal sample $U_{1j},\ldots,U_{nj}$ and $\lceil a \rceil$ is the smallest integer not smaller than $a \in \R$. We also write for $x \in [0,\infty)^d$,
\begin{align*}
	V_n(x) &:= \frac{n}{k} \PP \lc U_1 \leq \frac{k}{n} x_1 \text{ or } \ldots \text{ or } U_d \leq \frac{k}{n} x_d \rc ; \\
	T_n(x) &:= \frac{n}{k} \frac{1}{n} \sum_{i=1}^n \1 \lacc U_{i1} \leq \frac{k}{n} x_1 \text{ or } \ldots \text{ or } U_{id} \leq \frac{k}{n} x_d \racc ; \\
	\hat{L}_n(x) &:= \frac{n}{k} \frac{1}{n} \sum_{i=1}^n \1 \lacc U_{i1} \leq \frac{k}{n} S_{n1}(x_1) \text{ or } \ldots \text{ or } U_{id} \leq \frac{k}{n} S_{nd}(x_d) \racc \\
	&= \frac{1}{k} \sum_{i=1}^n \1 \lacc R_{i1} > n + 1 - kx_1 \text{ or } \ldots \text{ or } R_{id} > n + 1 - kx_d \racc.
\end{align*}
With this notation, we note that $v_n(x) = \sqrt{k} \{T_n(x) - V_n(x)\}$ and $\hat{L}_n(x) = T_n(S_n(x))$. As claimed in~\cite[Equation~(7.1)]{einmahl2012}, it is easily seen that the asymptotic properties of $\ell_n$ and $\hat{L}_n$ are the same so that we will consider the process $\sqrt{k}(\hat{L}_n - \ell)$ instead.

We consider the following decomposition: for $x \in [0,\infty)^d$,
\begin{align*}
	\sqrt{k}(\hat{L}_n(x) - \ell(x)) &= \sqrt{k} \lp T_n(S_n(x)) - V_n(S_n(x)) \rp \\
	&\qquad + \sqrt{k} \lp V_n(S_n(x)) - \ell(S_n(x)) \rp \\
	&\qquad + \sqrt{k} \lp \ell(S_n(x)) - \ell(x) \rp, 
\end{align*}
where we call the first term of the decomposition the \emph{stochastic term} (and denote it by $D_1(x)$), the second term will be called the \emph{bias term} (and will be denoted by $D_2(x)$) and the last term will be called the \emph{random set term} (and will be denoted by $D_3(x)$).

\paragraph*{Skorokhod construction.}
As in~\cite{einmahl2012}, we will work on a Skorokhod construction, without changing the notation to keep the proof readable.

For fixed $T > 0$, Theorem~3.1 in~\cite{einmahl1997} ensures that on a Skorokhod construction (depending on $T$), there exists a sequence of processes $(v_n)$ and a Wiener process $W_l(x) = W_\Lambda(A_x)$, where $W_\Lambda$ is the $d$-dimensional analog to the one appearing in Theorem~\ref{thm:asymptoticsEAM}, such that as $n \ra \infty$,
\begin{equation}
\label{eq:skorokhod_stdf}
	\sup_{x \in [0,2T]^d} \labs v_n(x) - W_l(x) \rabs \xrightarrow{a.s.} 0.
\end{equation}
In particular, this implies the marginal convergences
\begin{equation}
	\label{eq:marginal_processes}
	\sup_{x_j \in [0,2T]} \labs v_{nj}(x_j) - W_j(x_j) \rabs \xrightarrow{a.s.} 0, \qquad j \in \{1,\ldots,d\},
\end{equation}
where $W_j(x_j) = W_l(0,\ldots,0,x_j,0,\ldots,0)$. By Vervaat's Lemma~\cite[Lemma~A.0.2]{dHF2006}, we also have the convergence of the marginal quantile processes:
\begin{equation}
	\label{eq:marginal_quantile_processes}
	\sup_{x_j \in [0,2T]} \labs \sqrt{k} \lp S_{nj}(x_j) - x_j \rp + W_j(x_j) \rabs \xrightarrow{a.s.} 0, \qquad j \in \{1,\ldots,d\}.
\end{equation}

\paragraph*{Asymptotic expansions of the bias term.}
We show that $D_2$ is asymptotically negligible for our purposes, that is, $\sup_{x \in [0,T]^d} |D_2(x)| = \oh_{\PP}(1)$ as $n \ra \infty$. It follows from~\eqref{eq:marginal_quantile_processes} that $S_{nj}(x_j) = x_j + \Oh_{\PP}(1/\sqrt{k})$ where the $\Oh_{\PP}$ term is uniform in $x_j \in [0,T]$ so that, with probability tending to one, we have
\begin{align*}
	\sup_{x \in [0,T]^d} |D_2(x)| &\leq \sup_{y \in [0,2T]^d} \sqrt{k} \labs V_n(y) - \ell(y) \rabs \\
	&\leq 2T \sqrt{k} \sup_{w \in [0,2T]^d, \|w\|_1 = 1} \labs V_n(w) - \ell(w) \rabs.
\end{align*}
By points 1 and 2 of Assumption~\ref{ass:stdf_expansion}, the latter expression is of the order
\[
	\sqrt{k} \Oh \lp (k/n)^\alpha \rp = \Oh \lp \lp \frac{k}{n^{2\alpha/(1+2\alpha)}} \rp^{1/2 + \alpha} \rp = \oh(1)
\]
as $n \ra \infty$.

\paragraph*{Asymptotic expansions of the stochastic term.}
Note that for any $x \in [0,T]^d$, $D_1(x) = v_n(S_n(x))$. Our aim is to show that
\[
\sup_{x \in [0,T]^d} \labs v_n(S_n(x)) - v_n(x) \rabs = \oh_{\PP}(1),
\]
as $n \ra \infty$. By the triangle inequality, we have
\begin{align*}
	\sup_{x \in [0,T]^d} \labs v_n(S_n(x)) - v_n(x) \rabs &\leq \sup_{x \in [0,T]^d} \labs v_n(S_n(x)) - W_l(S_n(x)) \rabs \\
	&\quad + \sup_{x \in [0,T]^d} \labs W_l(S_n(x)) - W_l(x) \rabs \\
	&\quad + \sup_{x \in [0,T]^d} \labs W_l(x) - v_n(x) \rabs.
\end{align*}
Arguing as for the bias term, by~\eqref{eq:marginal_quantile_processes}, we see that the first term in this decomposition is bounded with probability tending to one by $\sup_{y \in [0,2T]^d} |v_n(y) - W_l(y)|$ which is $\oh_{\PP}(1)$ as $n \ra \infty$, by~\eqref{eq:skorokhod_stdf}. The second term of this decomposition is $\oh_{\PP}(1)$ as $n \ra \infty$ by uniform continuity of the limiting process $W_l$. Finally, the last term of this decomposition is immediately $\oh_{\PP}(1)$ as $n \ra \infty$ by~\eqref{eq:skorokhod_stdf}.

\paragraph*{Asymptotic expansions of the random set term.}
It follows from point 3 in Assumption~\ref{ass:stdf_expansion} and the mean value theorem that for $x \in [0,T]^d$,
\[
	D_3(x) = \sqrt{k} \lp \ell(S_n(x)) - \ell(x) \rp = \sum_{j=1}^d \sqrt{k} \lp S_{nj}(x_j) - x_j \rp \dot{\ell}_j(\xi_n),
\]
for some $\xi_n$ such that $\xi_{nj} \in (x_j, S_{nj}(x_j))$ for any $j=1,\ldots,d$.
Hence, by the triangle inequality, we are done proving our expansion if we show
\begin{align*}
	\sup_{x \in [0,T]^d} \sum_{j=1}^d \labs \sqrt{k} \lp S_{nj}(x_j) - x_j \rp \dot{\ell}_j(\xi_n) + v_{nj}(x_j) \dot{\ell}_j(x) \rabs = \oh_{\PP}(1).
\end{align*}
Every term in this sum can be dealt with in the exact same way so that we only consider $j=1$.

Fix $0 < \delta < T$. By the triangle inequality and~\eqref{eq:marginal_quantile_processes},
\begin{align*}
	&\sup_{x \in [0,T]^d} \labs \sqrt{k} \lp S_{n1}(x_1) - x_1 \rp \dot{\ell}_1(\xi_n) + v_{n1}(x_1) \dot{\ell}_1(x) \rabs \\
	&\quad \leq \sup_{y \in [0,2T]^d} \labs \dot{\ell}_1(y) \rabs \cdot \sup_{x_1 \in [0,T]} \labs \sqrt{k} \lp S_{n1}(x_1) - x_1 \rp + v_{n1}(x_1) \rabs \\
	&\qquad + \sup_{x_1 \in [0,T]} \labs v_{n1}(x_1) \rabs \cdot \sup_{x \in [\delta,T] \times [0,T]^{d-1}} \labs \dot{\ell}_1(\xi_n) - \dot{\ell}_1(x) \rabs \\
	&\qquad + 2 \sup_{y \in [0,2T]^d} \labs \dot{\ell}_1(y) \rabs \cdot \sup_{x_1 \in [0,\delta]} \labs v_{n1}(x_1) \rabs.
\end{align*}
In each term of the latter upper bound, it is easy to see that the first factor in the product is bounded (in probability) and that the second factor converges to zero (in probability). 
For the first term, we simply use that $0 \leq \dot{\ell}_1 \leq 1$ and we combine~\eqref{eq:marginal_processes} and~\eqref{eq:marginal_quantile_processes}. 
For the second term, we use~\eqref{eq:marginal_processes} which shows that $v_{n1}$ is uniformly bounded in probability on $[0,T]^d$ by Prohorov's theorem and the fact that $\dot{\ell}_1$ is continuous on the compact set $[\delta,T] \times [0,T]^{d-1}$, hence, uniformly continuous on the same set.
For the third term, we again use that $0 \leq \dot{\ell}_1 \leq 1$ and~\eqref{eq:marginal_processes} which ensures that by picking $n$ large enough, $\sup_{x_1 \in [0,\delta]} \labs v_{n1}(x_1) \rabs$ is arbitrary close to $\sup_{x_1 \in [0,\delta]} \labs W_1(x_1) \rabs$ in probability and, by picking $\delta$ small enough, the latter supremum will be arbitrary close to zero.
This completes the proof of the first part of the theorem. 

\bgroup
\color{black}
For the second part, note that
\begin{multline*}
	\sup_{\substack{x_1,x_2 > 0 \\ x_1 + x_2 = 1}} \labs t^{-1} \PP \lc U_1 \leq tx_1, U_2 \leq tx_2 \rc - \ell(x_1,x_2) \rabs \\
	\leq \sup_{\substack{x_1,x_2 > 0 \\ x_1 + x_2 = 1}} \iint_{A_{(x_1,x_2)}} \labs t c(tu_1,tu_2) - \lambda(u_1,u_2) \rabs \d u_1\d u_2,
\end{multline*}
where we recall $A_{(x_1,x_2)} = \{(u_1,u_2) \in [0,\infty)^2: u_1 \leq x_1 \text{ or } u_2 \leq x_2\}$. One observes that the latter supremum can be bounded, independently of $(x_1,x_2)$ by
\[
	\mathcal{D}_{1/t}(t) + \iint_{\substack{u_1 \wedge u_2 \leq 1 \\ u_1 \vee u_2 > 1/t}} \labs t c(tu_1,tu_2) - \lambda(u_1,u_2) \rabs \d u_1\d u_2.
\]
Under Assumption~\ref{ass:bias}, the first term satisfies $\mathcal{D}_{1/t}(t) = \Oh(t^{\alpha})$ as $t \ra \infty$. For the second term, assuming $t < 1$, it equals
\[
	\iint_{\substack{u_1 \wedge u_2 \leq 1 \\ u_1 \vee u_2 > 1/t}} \lambda(u_1,u_2) \d u_1\d u_2,
\]
since the copula density $c \equiv 0$ outside of the unit square $[0,1]^2$. Moving to polar coordinates, one shows that the latter integral is bounded by
\[
	\Phi_p(t) + \lacc \Phi_p \lp \tfrac{\pi}{2} \rp  - \Phi_p \lp \tfrac{\pi}{2} - t \rp \racc = O(t^\alpha),
\]
where the equality also follows from Assumption~\ref{ass:bias}. This shows that condition~1 of Assumption~\ref{ass:stdf_expansion} is verified in this setting. Condition~2 is immediate from Assumption~\ref{ass:bias}. Condition~3 follows from Assumption~\ref{ass:smoothness} since for $x= (x_1,x_2) \in [0,\infty)^2$, using the fact that $\Lambda$ has Lebesgue margins,
\begin{align*}
	\ell(x) = \Lambda(A_x) 
	&= \Lambda \lp [0,x_1] \times [0,\infty) \rp + \Lambda \lp [0,\infty) \times [0,x_2] \rp - \Lambda \lp [0,x_1] \times [0,x_2] \rp \\
	&= x_1 + x_2 - \Lambda \lp [0,x_1] \times [0,x_2] \rp,
\end{align*}
which is continuously differentiable under Assumption~\ref{ass:smoothness}.
\egroup

\section{Proof of Theorem~\ref{thm:asymptoticTS}}
\label{sec:app:asymptoticTS}

We assume that we are working under $H_0$ and that $r_0 \in \param$ denotes the true parameter underlying the data.

First observe that
\[
	\ts = F \lc \sqrt{k}(\tQ_p-Q_{p,\est}) \rc,
\]
where $F : \ell^\infty([0,\pi/2]) \ra (0,\infty)$ is defined for any $z \in \ell^\infty([0,\pi/2])$ by
\[
	F(z) = \int_0^{\pi/2} |z(\theta)| \weight(\theta) \d\theta.
\]
The map $F$ is continuous since if $(z_n)_{n \in \N} \subset \ell^\infty([0,\pi/2])$ is such that $z_n \ra z \in \ell^\infty([0,\pi/2])$,
\[
	|F(z_n) - F(z)| \leq \int_0^{\pi/2} \labs |z_n(\theta)| - |z(\theta)| \rabs \weight(\theta) \d\theta \leq \|z_n-z\|_{\ell^\infty([0,\pi/2])} \int_0^{\pi/2} \weight(\theta) \d\theta \ra 0,
\] 
as $n \ra \infty$ since $\weight$ is assumed to be integrable. Consequently, in view of the continuous mapping theorem, it suffices to determine the asymptotic distribution of the process
\[
	\lacc \sqrt{k} \lp \tQ_p(\theta)-Q_{p,\est}(\theta) \rp : \theta \in [0,\pi/2] \racc,
\]
in $\ell^\infty([0,\pi/2])$ to determine the one of our test statistic.

For any $\theta \in [0,\pi/2]$, we have
\[
	\sqrt{k} \lp \tQ_p(\theta)-Q_{p,\est}(\theta) \rp
	= \sqrt{k} \lp \tQ_p(\theta)-Q_{p,r_0}(\theta) \rp - \sqrt{k} \lp Q_{p,\est}(\theta) -  Q_{p,r_0}(\theta) \rp.
\]
In view of Corollary~\ref{cor:asymptoticsMEL}, the first term in the right hand side satisfies
\[
	\sqrt{k} \lp \tQ_p(\theta)-Q_{p,r_0}(\theta) \rp
	= E_{n,p}^{Q_{p,r_0}}(\theta) + \frac{\int_0^{\pi/2} E_{n,p}^{Q_{p,r_0}}(\psi) f'(\psi) \d\psi }{\sigma_{Q_{p,r_0}}^2(f)} \int_0^\theta f(\psi) \d Q_{p,r_0}(\psi) + \oh_{\PP^*}(1),
\]
as $n \ra \infty$, where the reminder is uniform in $\theta \in [0,\pi/2]$. In view of Assumption~\ref{ass:modelAndEstimator} and Theorem~\ref{thm:expansionSTDF}, the second term satisfies,
\begin{align*}
	&\sqrt{k} \lp Q_{p,\est}(\theta) -  Q_{p,r_0}(\theta) \rp = \sqrt{k} \lp \paramap(\est) - \paramap(r_0) \rp \\
	&\quad =  \int_0^\theta \nabla_r \varrho_{p,r_0}(x) \d x \dotp \sqrt{k} (\est - r_0) + \oh_{\PP}(1) \\
	&\quad =  \int_0^\theta \nabla_r \varrho_{p,r_0}(x) \d x \dotp \int_{[0,1]^2} \sqrt{k} \lp \widehat{\ell}_n(x_1,x_2) - \ell_{r_0}(x_1,x_2) \rp g(x_1,x_2) \, \d\sigma(x_1,x_2) + \oh_{\PP}(1) \\
	&\quad = \int_0^\theta \nabla_r \varrho_{p,r_0}(x) \d x \dotp \int_{[0,1]^2}  \lp v_n(x_1,x_2) - \sum_{j=1}^{2} \dot{\ell}_{r_0,j}(x_1,x_2) v_{nj}(x_j) \rp g(x_1,x_2) \, \d\sigma(x_1,x_2) \\
	&\qquad \qquad + \oh_{\PP}(1),
\end{align*}
where the reminder is uniform in $\theta \in [0,\pi/2]$.

Summarizing, we have shown that, up to asymptotically negligible terms, all the terms appearing in the decomposition of the process $\sqrt{k}(\tQ_p-Q_{p,\est})$ are asymptotically equivalent to continuous functionals of the tail empirical process
\[
	W_{n,k}(C) = \sqrt{k} \lacc \frac{n}{k} P_n \lp \frac{k}{n} C \rp - \frac{n}{k} P \lp \frac{k}{n} C \rp \racc, 
\]
evaluated on sets belonging to a collection $\mathcal{C}$ which is of finite VC dimension. Asymptotics of this process and weighted marginal versions of it are described in Proposition~1 and Proposition~2 of~\cite{lhaut2023asymptotic}, respectively. 
Combining those results with a suitable application of the continuous mapping theorem~\cite[Theorem~1.3.6]{VVV1996} permits to conclude the proof.

\section{Proof of Theorem~\ref{thm:cont_quantiles}}
\label{sec:app:cont_quantiles}

The proof of Theorem~\ref{thm:cont_quantiles} relies on intermediate results that we formulate and prove below.

Consider again the Gaussian process $W_\Lambda$ indexed by Borel sets of $\spacerv$ appearing in Theorem~\ref{thm:asymptoticsEAM}. Given a simple function
\[
	f(x) = \sum_{i=1}^{N} c_i \1_{C_i}(x),
\]
with constants $c_1,\ldots,c_N \in \R$ and Borel sets $C_1, \ldots, C_N$ in $\spacerv$, it will be convenient to introduce the notation
\[
	W_\Lambda(f) = \sum_{i=1}^{N} c_i W_\Lambda(C_i).
\]
It is easy to see that for any such functions $f_1,\ldots,f_k$, the random vector $\big(W_\Lambda(f_1), \ldots, W_\Lambda(f_k) \big)$ remains normally distributed with zero mean and the covariance between any pair equals
\[
	\EE[W_\Lambda(f_i) W_\Lambda(f_j)] = \Lambda(f_i f_j) = \int_{\spacerv} f_i(x)f_j(x) \d\Lambda(x).
\]
Consequently, we may view $W_\Lambda$ as a Gaussian process indexed by simple functions on $\spacerv$.

\bgroup
\color{black}
\begin{lemma}
\label{lem:cont_gaussian_process}

Consider the class of functions
\begin{align*}
	\cF 
	&\egdef \lacc \1_{C_{p,\theta}} : \theta \in [0,\pi/2] \racc \cup \lacc \1_{[0,x] \times [0,\infty]}w(x) : x \in [0,\infty) \racc \\
	&\qquad \cup \lacc \1_{[0,\infty] \times [0,y]}w(y) : y \in [0,\infty) \racc \cup \lacc \1_{A_{(x,y)}} : (x,y) \in [0,1]^2 \racc \\
	&\egdef \cF_1 \cup \cF_2 \cup \cF_3 \cup \cF_4,
\end{align*}
where
\[
	w(x) \egdef
	\begin{cases}
		\frac{1}{x^\eta \vee x^\gamma}, \qquad &\text{si } x > 0 \\
		0,  \qquad &\text{si } x = 0,
	\end{cases}
\]
with $0 \leq \eta < 1/2 < \gamma < 1$.
Let $r_0 \in \param$. If $(r_n)_{n \in \N} \subset \param$ is such that $r_n \rightarrow r_0$ as $n \ra \infty$,  we have in $\ell^\infty(\cF)$ the weak convergence
\[
	\lacc W_{\Lambda_{r_n}}(f) : f \in \cF \racc \wc \lacc W_{\Lambda_{r_0}}(f) : f \in \cF \racc, \qquad n \ra \infty,
\]
provided the maps $\theta \in [0,\pi/2] \mapsto \Phi_{p,r_0}(\theta)$ and $r \in \param \mapsto \Phi_{p,r}(\theta)$ (for fixed $\theta \in [0,\pi/2]$) are continuous, an assumption that is verified under Assumption~\ref{ass:smoothness} and point 1 of Assumption~\ref{ass:modelAndEstimator}.
\end{lemma}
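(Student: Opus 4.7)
The plan is to verify the two classical ingredients for weak convergence of Gaussian processes in $\ell^\infty(\cF)$: convergence of finite-dimensional marginals and asymptotic tightness, in the spirit of~\cite[Theorem~1.5.4]{VVV1996}.

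\emph{Finite-dimensional convergence.} Since each vector $(W_{\Lambda_{r_n}}(f_1),\ldots,W_{\Lambda_{r_n}}(f_m))$ is centered Gaussian with covariance matrix $(\Lambda_{r_n}(f_i f_j))_{i,j}$, it suffices to prove $\Lambda_{r_n}(fg) \to \Lambda_{r_0}(fg)$ for every $f,g \in \cF$. I will treat the cases according to which sub-classes $f$ and $g$ belong to. Within $\cF_1$, covariances reduce to $\Phi_{p,r_n}(\theta \wedge \theta')$, using the nesting of the sets $C_{p,\theta}$ in $\theta$, and these converge by the assumed continuity of $r \mapsto \Phi_{p,r}(\theta)$. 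Within $\cF_2$ and $\cF_3$, the covariances involve only the Lebesgue margins of $\Lambda_{r_n}$ and are therefore independent of $r_n$, taking the explicit form $w(x)w(x')(x\wedge x')$. Within $\cF_4$, the identity $\ell_r(x,y) = x + y - \Lambda_r([0,x]\times[0,y])$ combined with inclusion-exclusion expresses the covariances as continuous functions of $\ell_{r_n}$, which inherit continuity in $r$ from $\Phi_{p,r}$. Cross-covariances between different sub-classes reduce to $\Lambda_{r_n}$-measures of intersections of the defining sets and can be handled by exactly the same continuity arguments.

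\emph{Asymptotic tightness.} The aim is to establish asymptotic equicontinuity of $W_{\Lambda_{r_n}}$ with respect to the intrinsic pseudo-metric $e_{r_0}(f,g) \egdef \sqrt{\Lambda_{r_0}((f-g)^2)}$. I will show that each sub-class $\cF_i$ is a VC-type parametric family of (weighted) indicators of monotone set families, so that Dudley's entropy integral is finite under $e_{r_0}$; this simultaneously gives tightness of $W_{\Lambda_{r_0}}$ and the desired uniform modulus of continuity. For $\cF_1$, the identity $(\1_{C_{p,\theta}} - \1_{C_{p,\theta'}})^2 = \1_{C_{p,\theta \vee \theta'} \setminus C_{p,\theta \wedge \theta'}}$ has $\Lambda_{r_0}$-integral $|\Phi_{p,r_0}(\theta) - \Phi_{p,r_0}(\theta')|$, a continuous function of $(\theta,\theta')$; the case $\cF_4$ is treated similarly via $\ell_{r_0}$. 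The weights $w$ in $\cF_2, \cF_3$ are square-integrable against the Lebesgue margin precisely thanks to $\eta < 1/2 < \gamma$, so the corresponding parametrizations are totally bounded. Uniformity in $n$ follows because $e_{r_n}(f,g) \to e_{r_0}(f,g)$ uniformly on $\cF \times \cF$: within $\cF_2, \cF_3$ the metrics are independent of $r$; within $\cF_1, \cF_4$ and across sub-classes, the uniform convergence stems from Dini's theorem applied to the continuous monotone maps $(\theta,\theta') \mapsto \Phi_{p,r}(\theta) - \Phi_{p,r}(\theta')$ and their analogues.

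\emph{Conclusion and main obstacle.} Finite-dimensional convergence combined with asymptotic equicontinuity yields the claim $W_{\Lambda_{r_n}} \wc W_{\Lambda_{r_0}}$ in $\ell^\infty(\cF)$. The hard part is expected to be the upgrade from pointwise to uniform convergence of the intrinsic metrics $e_{r_n}$ on $\cF \times \cF$, which is what transfers the finite Dudley entropy bound from the limit measure to the pre-limit measures and secures a uniform modulus of continuity. The explicit choice of $\eta$ and $\gamma$ is precisely tailored to make this step work for the weighted marginal sub-classes $\cF_2$ and $\cF_3$, while the nestedness of $C_{p,\theta}$ and $A_{(x,y)}$ allows one to reduce all other cases to the continuity of $\Phi_{p,\cdot}$ supplied by the hypothesis.
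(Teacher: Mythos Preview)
Your plan matches the paper's proof almost exactly: the same split into finite-dimensional convergence plus asymptotic tightness via Theorem~1.5.4 in~\cite{VVV1996}, the same treatment of each sub-class $\cF_j$ through a Dudley-type entropy bound for sub-Gaussian processes, the same observation that the intrinsic semi-metrics on $\cF_2,\cF_3$ depend only on the Lebesgue margins and are therefore $r$-free, and the same identification of the passage from the $r_n$-metric to the $r_0$-metric as the only non-trivial step. Two small corrections: (i) your appeal to Dini's theorem for the uniform convergence of the metrics on $\cF_1$ is not quite right, since there is no monotonicity in $n$; what you actually need (and what the paper uses) is the Pólya-type fact that pointwise convergence of the monotone functions $\theta \mapsto \Phi_{p,r_n}(\theta)$ to the continuous limit $\Phi_{p,r_0}$ is automatically uniform, after which the two-variable statement follows by the triangle inequality; (ii) the sets $A_{(x,y)}$ are not nested, so for $\cF_4$ the paper instead dominates the intrinsic metric by the $r$-free bound $\rho_4((x_1,y_1),(x_2,y_2)) = \sqrt{|x_1-x_2|+|y_1-y_2|}$, which is simpler than invoking convergence of $\ell_{r_n}$.
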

\egroup

\begin{proof}
By Theorem~1.5.4 in~\cite{VVV1996}, the desired weak convergence holds provided
\begin{enumerate}
	\item For any $k \in \N$ and any $f_1,\ldots,f_k \in \cF$,
	\[
		\lp 
			W_{\Lambda_{r_n}}(f_1), \ldots,
			W_{\Lambda_{r_n}}(f_k) 
		\rp 
		\wc
		\lp 
			W_{\Lambda_{r_0}}(f_1), \ldots, 
			W_{\Lambda_{r_0}}(f_k) 
		\rp
	\] 
	as random vectors on $\R^k$.
	\item The sequence
	\[
		\lacc W_{\Lambda_{r_n}}(f) : f \in \cF \racc_{n \in \N}
	\]
	is asymptotically tight. 
\end{enumerate}
Point~1 follows from the fact that each random vector $\lp W_{\Lambda_{r_n}}(f_1), \ldots, W_{\Lambda_{r_n}}(f_k) \rp$ is Gaussian with zero mean and covariance matrix $\Sigma_k(r_n)$ having elements
\[
	\lim_{n \ra \infty} \Sigma_{k,ij}(r_n) = \Sigma_{k,ij}(r_0), \qquad i,j \in \{1,\ldots,k\},
\]
where $\Sigma_k(r_0)$ is the covariance matrix of $\lp W_{\Lambda_{r_0}}(f_1), \ldots, W_{\Lambda_{r_0}}(f_k) \rp$. \textcolor{black}{Indeed, each function $f \in \cF$ is an (possibly weighted) indicator function of a certain set in $\spacerv$ so that element-wise convergence of the covariance matrix $\Sigma_k(r_n)$ to $\Sigma_k(r_0)$ follows from the continuity of the map $r \in \param \mapsto \Phi_{p,r}(\theta)$ for fixed $\theta \in [0,\pi/2]$. This is in particular guaranteed by point 1 of Assumption~\ref{ass:modelAndEstimator}.}


Point~2 will follow from establishing the asymptotic tightness of each sequence
\[
	\lacc W_{\Lambda_{r_n}}(f) : f \in \cF_j \racc_{n \in \N}, \qquad j \in \{1,\ldots,4\}
\]
separately. By Theorem~1.5.7 in~\cite{VVV1996}, it is equivalent to show that for each $j = 1,\ldots,4$, there exists a semi-metric $\rho_j$ on $\cF_j$ such that $(\cF_j,\rho_j)$ is totally bounded and for any $\epsilon, \eta > 0$, there exists $\delta = \delta(\epsilon,\eta) > 0$ and $n_0 = n_0(\delta) \in \N$ such that
\[
	\PP \lc \sup_{f,g \in \cF_j, \rho_j(f,g) < \delta} \labs W_{\Lambda_{r_n}}(f) - W_{\Lambda_{r_n}}(g) \rabs > \epsilon \rc < \eta, \qquad n \geq n_0. 
\]

\paragraph*{Case $\boldsymbol{j=1}$.} \quad
Each function $f \in \cF_1$ is in bijection with a certain $\theta \in [0,\pi/2]$. Hence, we may view the sequence $\lacc W_{\Lambda_{r_n}}(f) : f \in \cF_1 \racc_{n \in \N}$ as a sequence of processes indexed by $T_1 \egdef [0,\pi/2]$. Trivially, each element of $\lacc W_{\Lambda_{r_n}}(f) : f \in \cF_1 \racc_{n \in \N}$ is a Gaussian process and, hence, sub-Gaussian as defined on page~101 of~\cite{VVV1996} with respect to the standard deviation semi-metric
\[
	\rho_{1,n}(\theta_1,\theta_2) \egdef \sqrt{\labs \Phi_{p,r_n}(\theta_1) - \Phi_{p,r_n}(\theta_2) \rabs} \egdef \sqrt{d_{1,n}(\theta_1,\theta_2)}.
\]
By Corollary~2.2.8 and page~98 of~\cite{VVV1996}, it follows that there exists a universal constant $K > 0$ such that for any $\delta >0$ and any $n \in \N$, we have
\begin{equation}
\label{eq:expec_j1}
	\EE \lc \sup_{\rho_{1,n}(\theta_1,\theta_2) < \delta} \labs W_{\Lambda_{r_n}}(\theta_1) - W_{\Lambda_{r_n}}(\theta_2) \rabs \rc 
	\leq K \int_0^\delta \sqrt{\log N(\tfrac{t}{2}, T_1 ,\rho_{1,n})} \d t,
\end{equation}
where $N(t,T_1,\rho_{1,n})$ is the minimal number of $\rho_{1,n}$-balls of radius $t > 0$ needed to cover $T_1$.
First observe that for any $t>0$, we have
\[
	 N(t,T_1,\rho_{1,n}) =  N(t^2,T_1,d_{1,n}).
\]
Furthermore, the map
\[
	\theta \in (T_1, d_{1,n}) \mapsto \Phi_{p,r_n}(\theta) \in ([0,\Phi_{p,r_n}(\pi/2)], e(x,y) \egdef |x-y|)
\]
is an isometry so that we can compute the covering number in the latter space. Since $[0,\Phi_{p,r_n}(\pi/2)] \subseteq I \egdef [0,2]$ for any $n \in \N$, we have 
\[
	N(t,T_1,d_{1,n}) \leq N(t,I,e), \qquad \forall t > 0.
\]
Note that for any $x \in I$,
\[
	B(x,t,e) \egdef \lacc y \in I : e(x,y) \leq t \racc = [x-t,x+t] \cap I,
\]
so that if $t \geq 1$, we have $N(t,I,e) = 1$ and for $0 < t < 1$,
\[
	N(t,I,e) \leq t^{-1}.
\]
We deduce that for $t \geq 1$, we have $N(t,T_1,\rho_{1,n}) = 1$, while for $0 < t < 1$,
\[
	N(t,T_1,\rho_{1,n}) =  N(t^2,T_1,d_{1,n}) \leq N(t^2,I,e) \leq t^{-2}.
\]
Since this holds for any $n \in \N$, note in particular that $N(t,T_1,\rho_{1})$ where
\[
	\rho_1(\theta_1,\theta_2) \egdef \lim_{n \ra \infty} \rho_{1,n}(\theta_1,\theta_2), \qquad (\theta_1,\theta_2) \in [0,\pi/2]^2
\]
satisfies the same bound and the space $(T_1,\rho_{1})$ is totally bounded. Now let $\delta \in (0,2)$. Then for any $t \in [0,\delta]$, we have by previous computations,
\[
	\sqrt{\log N(\tfrac{t}{2}, T_1 ,\rho_{1,n})} \leq \sqrt{\log(4/t^2)} = \sqrt{2 \log(2/t)}.
\]
thus, using a standard bound on the complementary error function $\operatorname{erfc}$,
\begin{align*}
	\int_0^\delta \sqrt{\log N(\tfrac{t}{2}, T_1 ,\rho_{1,n})} \d t
	&\leq \sqrt{2} \lacc 2 (\delta/2) \sqrt{\log(2/\delta)}  + \sqrt{\pi} \operatorname{erfc} \lp \sqrt{\log(2/\delta)} \rp \racc \\
	&\leq \sqrt{2} \delta \lacc \sqrt{\log(2/\delta)}  + 1/(2\sqrt{\log(2/\delta)} ) \racc.
\end{align*}

It follows from~\eqref{eq:expec_j1} and Markov's inequality that for any $\epsilon, \delta > 0$ and $n \in \N$,
\[
	\PP \lc \sup_{\rho_{1,n}(\theta_1,\theta_2) < \delta} \labs W_{\Lambda_{r_n}}(\theta_1) - W_{\Lambda_{r_n}}(\theta_2) \rabs > \epsilon \rc \leq K\sqrt{2} \eps^{-1}   \delta \lacc \sqrt{\log(2/\delta)}  + 1/(2\sqrt{\log(2/\delta)} ) \racc.
\]
Now fix $\epsilon, \eta > 0$. Pick $\delta = \delta(\epsilon,\eta) \in (0,2)$ such that the inequality
\[
	K2^{3/2} \eps^{-1} \delta \lacc \sqrt{\log(1/\delta)}  + 1/(2\sqrt{\log(1/\delta)} ) \racc < \eta
\]
holds true and pick $n_0 = n_0(\delta) \in \N$ such that for $n \geq n_0$,
\[
	\lacc (\theta_1,\theta_2) \in [0,\pi/2]^2 : \rho_{1}(\theta_1,\theta_2) < \delta \racc
	\subset \lacc (\theta_1,\theta_2) \in [0,\pi/2]^2 : \rho_{1,n}(\theta_1,\theta_2) < 2\delta \racc
\]
(such $n_0$ exists by uniform convergence of $\rho_{1,n}$ to $\rho_{1}$ which itself follows from the uniform convergence $\Phi_{p,r_n} \to \Phi_{p,r_0}$ which is a consequence of the pointwise convergence and the continuity of $\Phi_{p,r_0}$) then, for $n \geq n_0$ we have
\begin{multline*}
	\PP \lc \sup_{\rho_{1}(\theta_1,\theta_2) < \delta} \labs W_{\Lambda_{r_n}}(\theta_1) - W_{\Lambda_{r_n}}(\theta_2) \rabs > \epsilon \rc \\
	\leq \PP \lc \sup_{\rho_{1,n}(\theta_1,\theta_2) < 2\delta} \labs W_{\Lambda_{r_n}}(\theta_1) - W_{\Lambda_{r_n}}(\theta_2) \rabs > \epsilon \rc 
	< \eta,
\end{multline*}
and the asymptotic tightness condition is satisfied for $j=1$. 

\paragraph*{Case $\boldsymbol{j=2}$ and $\boldsymbol{j=3}$.} \quad
We only consider $j=2$ since the other situation is perfectly symmetric and can be handled with exactly the same tools. Here, each function is in bijection with a certain $x \in [0,\infty)$ and we will view the sequence $\lacc W_{\Lambda_{r_n}}(f) : f \in \cF_2 \racc_{n \in \N}$ as a sequence of processes indexed by $T_2 \egdef [0,\infty)$. For any $n \in \N$, the standard deviation semi-metric of the Gaussian process $\lacc W_{\Lambda_{r_n}}(x) : x \in [0,\infty) \racc$ equals
\[
	\rho_{2}(x_1,x_2) \egdef \sqrt{w(x_1)^2x_1 + w(x_2)^2x_2 - 2 w(x_1)w(x_2)\min(x_1,x_2)} \egdef \sqrt{d_2(x_1,x_2)}.
\]
Consequently, there exists a universal constant $K > 0$ such that for any $n \in \N$ and any $\delta > 0$,
\begin{equation}
\label{eq:expect_j2}
	\EE \lc \sup_{\rho_{2}(x_1,x_2) < \delta} \labs W_{\Lambda_{r_n}}(x_1) - W_{\Lambda_{r_n}}(x_2) \rabs \rc 
	\leq K \int_0^\delta \sqrt{\log N(\tfrac{t}{2},T_2,\rho_{2})} \d t,
\end{equation}
with $N(t,T_2,\rho_{2})$ the minimal number of $\rho_2$-balls of radius $t > 0$ needed to cover $T_2$.


Observe that for any $t>0$,
\begin{equation}
\label{eq:covering_2}
	N(t,T_2,\rho_{2}) = N(t^2,T_2,d_2) \leq N(t^2,T_{2,1},d_2) + N(t^2,T_{2,2},d_2),
\end{equation}
where $T_{2,1} \egdef [0,1]$ and $T_{2,2} \egdef [1,\infty]$. We start by dealing with the quantity $N(t,T_{2,1},d_2)$. Note that for any $x_1,x_2 \in T_{2,1}$, we have
\begin{equation}
\label{eq:d2_01}
	d_2(x_1,x_2) = x_1^{1-2\eta} + x_2^{1-2\eta} - 2(x_1 \wedge x_2) (x_1x_2)^{-\eta}.
\end{equation}
In particular, for any $x \in T_{2,1}$,
\[
	d_2(0,x) = x^{1-2\eta}.
\]
This shows that $N(t,T_{2,1},d_2) = 1$ for $t \geq 1$. Allowing $d_2$ to be defined on $[0,\infty)$ in the exact same way as in~\eqref{eq:d2_01}, we observe that for any $t > 0$ and $n \in \N$,
\[
	d_2(nt, (n+1)t) = t^{1-2\eta} d_2(n,n+1) = d_2(0,t) d_2(n,n+1).
\]
Our aim is to show that $d_2(n,n+1) \leq 1$ for any $n \in \N$. To this end, take any $n \in \N$ and note that
\[
	d_2(n,n+1) \leq 1 \iff g(\eta) \egdef d_2(n,n+1) = n^{1-2\eta} + (n+1)^{1-2\eta} - 2n^{1-\eta}(n+1)^{-\eta} \leq g(0) = 1,
\]
so that it suffices to show that the map $\eta \in (0,1/2) \mapsto g(\eta)$ is decreasing. This follows from writing
\[
	g(\eta) = \lp (n+1)^{\tfrac{1}{2} - \eta} - n^{\tfrac{1}{2}-\eta} \rp^2 + 2 \sqrt{n} [n(n+1)]^{-\eta} \lc \sqrt{n+1} - \sqrt{n} \rc,
\]
which shows that $g(\eta)$ is a sum of two terms decreasing in $\eta \in (0,1/2)$. This is obvious for the second term while for the first it follows from the fact that the application $\alpha \in (0,1/2) \mapsto (n+1)^\alpha - n^\alpha$ is increasing as shown by the direct computation
\[
	\dfrac{\d}{\d \alpha} \lacc (n+1)^\alpha - n^\alpha \racc = \log(n+1) (n+1)^\alpha - \log(n) n^{\alpha} \geq 0.
\]
For any $0 < t < 1$, the previous arguments guarantee that $d_2(t^{\tfrac{1}{1-2\eta}}, 2t^{\tfrac{1}{1-2\eta}}) \leq d_2(0, t^{\tfrac{1}{1-2\eta}}) = t$, and, similarly, using the notation $B(x,t,d_2) \egdef \{y \in T_{2,1} : d_2(y,x) \leq t\}$, we see that
\begin{align*}
	B(t^{\tfrac{1}{1-2\eta}}, t, d_2) &\subseteq [0,2t^{\tfrac{1}{1-2\eta}}] \\
	B(3t^{\tfrac{1}{1-2\eta}}, t, d_2) &\subseteq [2t^{\tfrac{1}{1-2\eta}}, 4t^{\tfrac{1}{1-2\eta}}] \\
	&\hspace{2mm} \vdots
\end{align*}
hence showing that
\[
	T_{2,1} \subseteq \bigcup_{n=}^{N(t)} B((n+1)t^{\tfrac{1}{1-2\eta}}, t, d_2),
\]
where
\[
	N(t,T_{2,1},d_2) \leq N(t) \leq \frac{1}{2} t^{- \tfrac{1}{1-2\eta}}.
\]
Regarding $N(t,T_{2,2},d_2)$, simply observe that the map
\[
	x \in (T_2,d_2) \mapsto x^{-1} \in (T_2,d_2)
\]
is an isometry for $\eta = 1 - \gamma$. We deduce that $N(t,T_{2,2},d_2) = 1$ for any $t \geq 1$ and
\[
	N(t,T_{2,2},d_2) \leq \frac{1}{2} t^{- \tfrac{1}{2\gamma-1}}, \qquad 0 < t < 1,
\]
so that in view of equation~\eqref{eq:covering_2}, for any $t \in (0,1)$,
\[
	N(t,T_2,\rho_{2}) \leq \frac{1}{2} \lp t^{- \tfrac{2}{1-2\eta}} + t^{-\tfrac{2}{2\gamma-1}} \rp.
\]
In particular, the space $(T_2,\rho_{2})$ is totally bounded.
Pick $\delta \in (0,2)$. Then by previous computations, we have that for any $t \in [0,\delta]$,
\[
	\sqrt{\log N(\tfrac{t}{2},T_2,\rho_{2})} \leq \sqrt{\frac{2}{1-2\alpha}} \sqrt{\log(2/t)},
\]
where $\alpha \egdef \min\{\eta,1-\gamma\} \in [0,1/2)$. Computations similar to the case $j=1$ then leads to
\begin{align*}
	\int_0^\delta \sqrt{\log N(\tfrac{t}{2}, T_2 ,\rho_{2})} \d t
	&\leq \sqrt{\frac{2}{1-2\alpha}} \lacc 2 (\delta/2) \sqrt{\log(2/\delta)}  + \sqrt{\pi} \operatorname{erfc} \lp \sqrt{\log(2/\delta)} \rp \racc \\
	&\leq \sqrt{\frac{2}{1-2\alpha}} \delta \lacc \sqrt{\log(2/\delta)}  + 1/(2\sqrt{\log(2/\delta)} ) \racc.
\end{align*}

Hence, equation~\eqref{eq:expect_j2} and Markov's inequality imply that for any $\epsilon,\delta > 0$,
\[
	\PP \lc \sup_{\rho_{2}(x_1,x_2) < \delta} \labs W_{\Lambda_{r_n}}(x_1) - W_{\Lambda_{r_n}}(x_2) \rabs > \eps \rc
	\leq \sqrt{\frac{2}{1-2\alpha}} K \eps^{-1} \delta \lacc \sqrt{\log(2/\delta)}  + 1/(2\sqrt{\log(2/\delta)} ) \racc.
\]
Fixing $\epsilon, \eta > 0$ and picking $\delta = \delta(\epsilon,\eta) \in (0,2)$ sufficiently small such that
\[
	\sqrt{\frac{2}{1-2\alpha}} K \eps^{-1} \delta \lacc \sqrt{\log(2/\delta)}  + 1/(2\sqrt{\log(2/\delta)} ) \racc < \eta,
\]
show the asymptotic tightness condition for $j=2$.

\paragraph*{Case $\boldsymbol{j=4}$.} \quad
As for the previous cases, each function $f \in \cF_4$ is in a one-to-one relation with a certain $(x,y) \in [0,1]^2$ and we will view the sequence $\lacc W_{\Lambda_{r_n}}(f) : f \in \cF_4 \racc_{n \in \N}$ as a sequence of processes indexed by $T_4 \egdef [0,1]^2$. Observe that for any $n \in \N$, the standard deviation semi-metric associated with the Gaussian process $\lacc W_{\Lambda_{r_n}}(x,y) : (x,y) \in [0,1]^2 \racc$ satisfies
\begin{align*}
	\rho_{4,n} \lp (x_1,y_1), (x_2,y_2) \rp 
	&\egdef \sqrt{\Lambda_{r_n} \lp A_{(x_1,y_1)} \triangle  A_{(x_2,y_2)} \rp} \\
	&\leq \sqrt{|x_1-x_2| + |y_1 - y_2|}  \egdef \rho_{4} \lp (x_1,y_1), (x_2,y_2) \rp.
\end{align*}
Therefore, the process $\lacc W_{\Lambda_{r_n}}(x,y) : (x,y) \in [0,1]^2 \racc$ is sub-Gaussian with respect to $\rho_{4}$ for any $n \in \N$ and satisfies for any $\delta > 0$,
\begin{equation}
\label{eq:expec_j4}
	\EE \lc \sup_{\rho_{4} \lp (x_1,y_1), (x_2,y_2) \rp < \delta} \labs W_{\Lambda_{r_n}}(\theta_1) - W_{\Lambda_{r_n}}(\theta_2) \rabs \rc 
	\leq K \int_0^\delta \sqrt{\log N(\tfrac{t}{2},T_4,\rho_{4})} \d t,
\end{equation}
for some universal constant $K > 0$, where $N(t,T_4,\rho_{4})$ is the minimal number of $\rho_{4}$-balls of radius $t>0$ needed to cover $T_4$. Defining $d_4 \egdef \rho_{4}^2$, we see that for any $t>0$,
\[
	N(t,T_4,\rho_{4}) = N(t^2,T_4,d_4).
\]
Define for any $t>0$ and $(x,y) \in T_4$,
\[
	B((x,y),t,d_4) \egdef \lacc (u,v) \in T_4 : |x-u| + |y-v| \leq t \racc.
\]
Pythagoras' theorem shows that for $t \geq 1$,
\[
	T_4 \subseteq B((0.5,0.5),t,d_4),
\]
so that $N(t,T_4,d_4) = 1$ for such values of $t$. For $0<t<1$, note that
\[
	B((x,y),t,d_4) \supseteq \lacc (u,v) \in T_4 : \max\{|u-x|,|v-y|\} \leq t/2 \racc.
\]
The latter set corresponds to a square centered at $(x,y)$ aligned with the axes with area $t^2$. We need can cover $T_4$ with $t^{-2}$ such squares so that for $0<t<1$,
\[
	N(t,T_4,\rho_{4}) = N(t^2,T_4,d_4) \leq t^{-4}.
\]
In particular that the space $(T_4,\rho_{4})$ is totally bounded.
Pick $\delta \in (0,2)$. Then by previous computations, we have that for any $t \in [0,\delta]$,
\[
	\sqrt{\log N(\tfrac{t}{2},T_4,\rho_{4})} \leq 2 \sqrt{\log(2/t)}.
\]
Computations similar to the previous cases leads to
\begin{align*}
	\int_0^\delta \sqrt{\log N(\tfrac{t}{2}, T_4 ,\rho_{4})} \d t
	&\leq 2 \lacc 2 (\delta/2) \sqrt{\log(2/\delta)}  + \sqrt{\pi} \operatorname{erfc} \lp \sqrt{\log(2/\delta)} \rp \racc \\
	&\leq 2 \delta \lacc \sqrt{\log(2/\delta)}  + 1/(2\sqrt{\log(2/\delta)} ) \racc.
\end{align*}
Equation~\eqref{eq:expec_j4} and Markov's inequality imply that for any $\eps, \delta > 0$,
\begin{multline*}
	\PP \lc \sup_{\rho_4((x_1,y_1), (x_2,y_2)) < \delta} \labs W_{\Lambda_{r_n}}(x_1,y_1) -  W_{\Lambda_{r_n}}(x_1,y_1) \rabs > \eps \rc \\
	\leq 2K\eps^{-1} \delta \lacc \sqrt{\log(2/\delta)}  + 1/(2\sqrt{\log(2/\delta)} ) \racc.
\end{multline*}
Given $\epsilon, \eta > 0$, by picking $\delta = \delta(\epsilon,\eta) > 0$ small enough such that
\[
	2K\eps^{-1} \delta \lacc \sqrt{\log(2/\delta)}  + 1/(2\sqrt{\log(2/\delta)} ) \racc < \eta,
\]
we verify the asymptotic tightness condition for this case.
\end{proof}

For any $r \in \param$, let $\P_r$ denote the law of $L_r$ on $\R$.
\begin{lemma}
\label{lem:cont_limit_law}
\bgroup
\color{black}
Under the same assumptions as Lemma~\ref{lem:cont_gaussian_process} and Assumption~\ref{ass:density_reg}, if $(r_n)_{n \in \N} \subset \param$ is such that $r_n \rightarrow r_0$ as $n \ra \infty$ for an interior point $r_0 \in \param$, we have $\P_{r_n} \wc \P_{r_0}$ on $\R$.
\egroup
\end{lemma}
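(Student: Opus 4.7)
The plan is to deduce $L_{r_n} \wc L_{r_0}$ by combining the functional weak convergence established in Lemma~\ref{lem:cont_gaussian_process} with a continuous mapping argument. Since the limit $\{W_{\Lambda_{r_0}}(f): f \in \cF\}$ is a tight Gaussian element in $\ell^\infty(\cF)$ with a separable support (by the total boundedness of the semi-metrics in the proof of Lemma~\ref{lem:cont_gaussian_process}), I would first invoke an almost-sure Skorokhod representation (Theorem~1.10.4 in~\cite{VVV1996}) to realize all processes on a common probability space satisfying
\[
	\sup_{f \in \cF} \labs W_{\Lambda_{r_n}}(f) - W_{\Lambda_{r_0}}(f) \rabs \xrightarrow{\text{a.s.}} 0.
\]
It then suffices to show that $L_{r_n} \to L_{r_0}$ almost surely on this coupled space.

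The second step is to express $L_r$ as a deterministic functional $H_r$ of the Gaussian path evaluated on~$\cF$. Unwinding the definitions in Corollary~\ref{cor:asymptoticsMEL} and Theorem~\ref{thm:asymptoticsEAM}: the process $\gamma_{p,r}(\theta)$ is assembled from (i) the evaluations $W_{\Lambda_r}(C_{p,\theta})$ (captured by $\cF_1$), (ii) integrals in $x$ of $\lambda_r$-weighted marginal processes $W_1, W_2$, whose values are recovered as $W_{\Lambda_r}(\1_{[0,x]\times[0,\infty]}w(x))/w(x)$ from $\cF_2, \cF_3$, and (iii) normalizations by $\Phi_{p,r}(\theta) = \Lambda_r(C_{p,\theta})$ and $\sigma^2_{Q_{p,r}}(f)$. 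Likewise, $I_{g,\sigma,r}$ is a $\sigma$-integral on $[0,1]^2$ of a linear combination of $W_{\Lambda_r}(A_{(x,y)})$ (from $\cF_4$) and the two marginals. The goal is then joint continuity: $H_{r_n}(z_n) \to H_{r_0}(z)$ whenever $z_n \to z$ uniformly on $\cF$ and $r_n \to r_0$, applied to $z_n = W_{\Lambda_{r_n}}$ and $z = W_{\Lambda_{r_0}}$.

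The continuity is proved by dominated convergence applied uniformly in $\theta$. This is where Assumption~\ref{ass:density_reg} is essential: part~1 provides an integrable envelope $\weightt(\theta) \varphi_{p,r_0}(\theta) \leq C$ (which combines with $q$ to give integrability on $[0,\pi/2]$), while part~2 yields uniform convergence $\sup_\theta \weightt(\theta)^\nu|\varphi_{p,r_n}(\theta) - \varphi_{p,r_0}(\theta)| \to 0$ and the analogous statement for $\nabla_r \varphi_{p,r_n}$. Via formula~\eqref{eq:phi_from_lambda} and homogeneity~\eqref{eq:lambda_homogen}, these translate into the uniform control of the integrands $\lambda_{r_n}(x, x\tan\theta)$ and $\lambda_{r_n}(x, y_p(x))$ appearing in $Z_{p,r}$. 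Uniform convergence on $\cF_1$ plus continuity of $r \mapsto \Phi_{p,r}(\theta)$ (granted by point~1 of Assumption~\ref{ass:modelAndEstimator}) gives $\sup_\theta |\Phi_{p,r_n}(\theta) - \Phi_{p,r_0}(\theta)| \to 0$, and the Remark following Theorem~\ref{thm:cont_quantiles} explains how to transfer these estimates between the $Q_{p,r}$- and $\Phi_{p,r}$-formulations. Combining everything,
\[
	\sup_{\theta \in [0,\pi/2]} \labs \gamma_{p,r_n}(\theta) - \gamma_{p,r_0}(\theta) - \int_0^\theta \lp \nabla_r \varrho_{p,r_n}(x) \dotp I_{g,\sigma,r_n} - \nabla_r \varrho_{p,r_0}(x) \dotp I_{g,\sigma,r_0} \rp \d x \rabs \to 0
\]
almost surely, and the outer $q$-weighted $L_1$ integral converges to $L_{r_0}$, giving a.s.\ convergence and thus $\P_{r_n} \wc \P_{r_0}$.

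The main obstacle is matching the growth of the integrands in $Z_{p,r}(\theta)$ near the boundary $\theta \in \{0, \pi/2\}$ and in the tail $x \to \infty$: the density $\varphi_{p,r_n}$ can blow up at the axes, the Gaussian marginals grow like $\sqrt{x\log\log x}$, and the integration bounds $x_p(\theta), y_p(x)$ move with $\theta$. The weight $w$ with $\eta < 1/2 < \gamma < 1$ was built into $\cF_2,\cF_3$ precisely to dominate the Gaussian fluctuations at both ends, and Assumption~\ref{ass:density_reg} precisely compensates for the angular blow-up. The technical core of the proof is to package these ingredients into a single $\theta$- and $r$-uniform integrable majorant, valid for all $r_n$ in a neighborhood of $r_0$, so that the dominated convergence argument applies cleanly to the outer integral against~$q$.
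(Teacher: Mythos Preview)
Your proposal is correct and follows essentially the same strategy as the paper: represent $L_r$ as a continuous functional of the Gaussian process $W_{\Lambda_r}$ indexed by $\cF$, and then pass to the limit using Lemma~\ref{lem:cont_gaussian_process} together with the regularity granted by Assumption~\ref{ass:density_reg}. The paper frames the argument operator-theoretically---writing $X_{r}=T_{r}(W_{\Lambda_{r}})$ for a bounded linear $T_{r}:\ell^\infty(\cF)\to\ell^\infty([0,\pi/2])$, proving $\|T_{r_n}-T_{r_0}\|\to 0$, and invoking the extended continuous mapping theorem directly---whereas you go through a Skorokhod representation and pathwise dominated convergence; but the decomposition of $T_r$ (into the pieces generating $\alpha_{p,r}$, $\beta_{p,r}$, $\gamma_{p,r}$ and the gradient term) and the technical estimates needed (control of $\lambda_r$ via~\eqref{eq:phi_from_lambda}, the role of the weight $w$ on $\cF_2,\cF_3$, and the uniform bounds from Assumption~\ref{ass:density_reg}) are identical in both routes.
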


\begin{proof}
Let $r_0 \in \param$ be an interior point.
Recall that
\[
	L_{r_0} = \int_0^{\pi/2} \labs X_{r_0}(\theta) \rabs \weight(\theta) \d\theta,
\]
where for $\theta \in [0,\pi/2]$,
\[
	X_{r_0}(\theta) \egdef \gamma_{p,r_0}(\theta) - \int_0^\theta \nabla_r \varrho_{p,r_0}(x) \d x \dotp I_{g,\sigma,r_0}.
\]
The idea is to write
\begin{equation}
\label{eq:T_r}
	X_{r_0} = T_{r_0}(W_{\Lambda_{r_0}}),
\end{equation}
for some linear operator $T_{r_0} : \ell^\infty(\cF) \mapsto \ell^\infty([0,\pi/2])$ satisfying the property that if $(r_n)_{n \in \N} \subset \param$ is such that $r_n \rightarrow r_0$ in $\R^m$ and $(z_n)_{n \in \N} \subset \ell^\infty(\cF)$ is such that $z_n \ra z$ in $\ell^\infty(\cF)$, then also
\begin{equation}
\label{eq:continuous_convergence}
	T_{r_n}(z_n) \ra T_{r_0}(z) \qquad \text{in }  \ell^\infty([0,\pi/2]).
\end{equation}
The result would then follow from Lemma~\ref{lem:cont_gaussian_process} combined with the extended continuous mapping theorem~\cite[Theorem~1.11.1]{VVV1996}.

A sufficient condition to ensure~\eqref{eq:continuous_convergence} is to show that the operator $T_{r_0}$ is bounded, i.e.,
\begin{equation*}
	\|T_{r_0}\| \egdef \sup \lacc \|T_{r_0}(z)\|_{\ell^\infty([0,\pi/2])} : \|z\|_{\ell^\infty(\cF)} \leq 1 \racc < \infty
\end{equation*}
together with
\begin{equation}
\label{eq:operator_convergence}
	\lim_{n \ra \infty} \|T_{r_n} - T_{r_0} \| = 0.
\end{equation}

In practice, if we are able to decompose $T_{r_0}$ as $T_r = \sum_{i=1}^{k} T_{r_0,i}$ for bounded linear operators $T_{r_0,i} : \ell^\infty(\cF) \mapsto \ell^\infty([0,\pi/2])$ satisfying~\eqref{eq:operator_convergence}, then we are done in view of the triangle inequality. The same reasoning also applies to composition of such operators: if $T_n,T: E \ra G$ are bounded linear operators between two normed spaces such that
\[
	T_n = V_n \circ U_n \qquad \text{and} \qquad
	T = V \circ U,
\] 
for some bounded linear operators $U_n,U : E \ra F$ and $V_n,V : F \ra G$ between normed spaces satisfying
\[
	\lim_{n \ra \infty} \| U_n - U \|_{E,F} = 0 \qquad \text{and} \qquad
	\lim_{n \ra \infty} \| V_n - V \|_{F,G} = 0,
\]
then also
\[
	\lim_{n \ra \infty} \|T_n - T \|_{E,G} = 0.
\]
Indeed, for any $e \in E$, we have
\begin{align*}
	\|T_n(e) - T(e)\|_G
	&= \|V_n(U_n(e)) - V(U(e)) \|_G \\
	&= \|V_n(U_n(e)) - V(U_n(e)) + V(U_n(e)) - V(U(e)) \|_G \\
	&\leq \|V_n - V\|_{F,G} \|U_n\|_{E,F} \|e\|_E + \|V\|_{F,G} \|U_n - U\|_{E,F} \|e\|_E.
\end{align*}
Since $\limsup_{n \ra \infty} \|U_n\|_{E,F} < \infty$, we can conclude.

\paragraph*{Process $\boldsymbol{\alpha_{p,r_0}}$.}
Let us first consider the linear operator $T_{\alpha,r_0} : \ell^\infty(\cF) \mapsto \ell^\infty([0,\pi/2])$ which is such that
\begin{equation}
\label{eq:alpha_op}
	T_{\alpha,r_0}(W_{\Lambda_{r_0}}) = \alpha_{p,r_0}.
\end{equation}
Our aim is to show that $T_{\alpha,r_0}$ is bounded and satisfies~\eqref{eq:operator_convergence}. We will assume $p < \infty$ since this corresponds to the most involved case. Note that $T_{\alpha,r_0}$ can be trivially decomposed as
\[
	T_{\alpha,r_0} = \sum_{i=1}^3 T_{\alpha,r_0,i},
\]
where for $z \in \ell^\infty(\cF)$ and $\theta \in [0,\pi/2]$ we defined
\begin{equation}
\label{eq:alpha_op_1}
	\lp T_{\alpha,r_0,1}(z) \rp (\theta) \egdef z(\1_{C_{p,\theta}}),
\end{equation}
and
\begin{multline}
\label{eq:alpha_op_2}
	\lp T_{\alpha,r_0,2}(z) \rp (\theta)
	\egdef \lambda_{r_0}(1,\tan\theta) \int_0^{x_p(\theta)} x^{-1} \Bigg[ z \lp \1_{[0,x] \times [0,\infty]} w(x) \rp w(x)^{-1} \tan\theta \\ - z \lp \1_{[0,\infty] \times [0,x\tan\theta]} w(x\tan\theta) \rp w(x\tan\theta)^{-1} \Bigg] \d x,
\end{multline}
and
\begin{multline}
\label{eq:alpha_op_3}
	\lp T_{\alpha,r_0,3}(z) \rp (\theta)
	\egdef \int_{x_p(\theta)}^{\infty} x^{-1} \lambda_{r_0}(1,(x^p-1)^{-1/p}) \Bigg[ z \lp \1_{[0,x] \times [0,\infty]} w(x) \rp w(x)^{-1} y_p'(x) \\
	- z \lp \1_{[0,\infty] \times [0,y_p(x)]} w(y_p(x)) \rp w(y_p(x))^{-1} \Bigg] \d x.
\end{multline}
In view of our previous comments, we can analyze each of those operators separately.

Let $z \in \ell^\infty(\cF)$ be such that $\|z\|_{\ell^\infty(\cF)} \leq 1$. Then we immediately see that
\[
	\|T_{\alpha,r_0,1}(z)\|_{\ell^\infty([0,\pi/2])} \leq 1.
\]
Therefore, $T_{\alpha,r_0,1}$ is bounded. Furthermore, if $(r_n)_{n \in \N} \subset \param$ is a sequence such that $r_n \ra r_0$ in $\R^m$ as $n \ra \infty$, condition~\eqref{eq:operator_convergence} is trivially satisfied since
\[
	\| T_{\alpha,r_n,1} - T_{\alpha,r_0,1} \| = 0 \qquad \text{for any } n \in \N.
\]

Let us now turn to the second operator. Let $z \in \ell^\infty(\cF)$ be such that $\|z\|_{\ell^\infty(\cF)} \leq 1$. Then we see that
\[
	\|T_{\alpha,r_0,2}(z)\|_{\ell^\infty([0,\pi/2])}
	\leq \sup_{\theta \in [0,\pi/2]} \lacc \lambda_{r_0}(1,\tan\theta) \int_0^{x_p(\theta)} x^{-1} \lc w(x)^{-1} \tan\theta + w(x\tan\theta)^{-1} \rc \d x \racc,
\]
so that if we show the latter supremum is finite, the operator will be bounded.
First observe that by~\eqref{eq:densities_relation}, we have for any $\theta \in [0,\pi/2]$,
\[
	\lambda_{r_0}(1,\tan\theta) \leq \min \lacc \tan\theta, (\tan\theta)^{-2} \racc \varphi_{p,r_0}(\theta).
\]
Furthermore, one may compute that for any $\theta \in [0,\pi/2]$,
\[
	\int_0^{x_p(\theta)} x^{-1} w(x)^{-1} \tan\theta \d x
	= \tan\theta \lacc \eta^{-1} + \gamma^{-1} \lp x_p(\theta)^\gamma -1 \rp \racc,
\]
and
\[
	\int_0^{x_p(\theta)} x^{-1}  w(x\tan\theta)^{-1} \d x
	= \eta^{-1} + (\tan\theta)^\gamma(x_p(\theta)^\gamma - (\tan\theta)^{-\gamma})/\gamma.
\]
One may show that for any $\theta \in [0,\pi/4]$,
\[
	\int_0^{x_p(\theta)} x^{-1} \lc w(x)^{-1} \tan\theta + w(x\tan\theta)^{-1} \rc \d x
	\leq 2 \eta^{-1} + \frac{2^{\gamma/p}}{\gamma},
\]
while for $\theta \in [\pi/4,\pi/2]$,
\[
	\frac{1}{\tan\theta} \int_0^{x_p(\theta)} x^{-1} \lc w(x)^{-1} \tan\theta + w(x\tan\theta)^{-1} \rc \d x
	\leq 2 \eta^{-1} + \frac{2^{\gamma/p}}{\gamma}.
\]
Consequently, observing that $(\tan\theta)^{-1} = \tan(\tfrac{\pi}{2} - \theta)$, we find the bound
\begin{multline*}
	\sup_{\theta \in [0,\pi/2]} \lacc \lambda_{r_0}(1,\tan\theta) \int_0^{x_p(\theta)} x^{-1} \lc w(x)^{-1} \tan\theta + w(x\tan\theta)^{-1} \rc \d x \racc \\
	\leq \lp 2 \eta^{-1} + \frac{2^{\gamma/p}}{\gamma} \rp \sup_{\theta \in [0,\pi/2]} \lacc \min \lacc \tan\theta, \tan(\tfrac{\pi}{2} - \theta) \racc \varphi_{p,r_0}(\theta) \racc.
\end{multline*}
The latter quantity is bounded in view of point 1 in Assumption~\ref{ass:density_reg}.
We deduce that the operator $T_{\alpha,r_0,2}$ is bounded. 
Now, let $(r_n)_{n \in \N} \subset \param$ be a sequence such that $r_n \ra r_0$ in $\R^m$ as $n \ra \infty$. Similar computations as before show that for any $z \in \ell^\infty(\cF)$ with $\|z\|_{\ell^\infty(\cF)} \leq 1$ and any $\theta \in [0,\pi/2]$, we have
\begin{align*}
	&\labs \lp T_{\alpha,r_n,2}(z) \rp (\theta) - \lp T_{\alpha,r_0,2}(z) \rp (\theta) \rabs \\
	&\qquad \leq \min \{\tan\theta,\tan(\tfrac{\pi}{2} - \theta) \} \labs  \varphi_{p,r_n}(\theta) -  \varphi_{p,r_0}(\theta) \rabs \lp 2\eta^{-1} + \frac{2^{\gamma/p}}{\gamma} \rp.
\end{align*}
The latter quantity converges to zero in view of point 2 in Assumption~\ref{ass:density_reg} so that condition~\eqref{eq:operator_convergence} is verified for $T_{\alpha,r_0,2}$.

Let us finally turn to the third operator $T_{\alpha,r_0,3}$ defined by~\eqref{eq:alpha_op_3}. We start by showing that it is bounded. Let $z \in \ell^\infty(\cF)$ be such that $\|z\|_{\ell^\infty(\cF)} \leq 1$. One computes that
\[
	\| T_{\alpha,r_0,3}(z) \|_{\ell^\infty([0,\pi/2])}
	\leq \int_1^\infty x^{-1} \lambda_{r_0}(1,(x^p-1)^{-1/p}) \lc x^\gamma |y_p'(x)| + y_p(x)^\gamma \rc \d x,
\]
so that we just need to show that the latter integral is finite. Formula~\eqref{eq:densities_relation} permits to show that
\[
	\lambda_{r_0}(1,(x^p-1)^{-1/p}) = \frac{1}{x} \cdot \frac{1}{1 + (x^p-1)^{-2/p}} \cdot \varphi_{p,r_0} \lp \arctan((x^p-1)^{-1/p}) \rp.
\]
Note also that $|y_p'(x)| = (x^p-1)^{-(1+1/p)}$ and recall that $y_p(x) = x/(x^p-1)^{1/p}$.
Considering the change of variable $u = \arctan((x^p-1)^{-1/p})$ which has inverse $x = x_p(u) = \|(1,\cot u)\|_p$ and differential $\d x = - x_p(u)^{1-p} (\tan u)^{-(1+p)} (\cos u)^{-2} \d u$ and noting that $(x^p-1)^{-1/p} = \tan u$, tedious computations show that the latter integral equals
\[
	\int_0^{\pi/2} \varphi_{p,r_0}(u) \lp \|(1,\cot u)\|_p^{\gamma-(1+p)} + \|(1,\tan u)\|_p^{\gamma-(1+p)} \rp \d u.
\]
The term in parenthesis is always smaller than $1$ so the integrand is bounded by $\varphi_{p,r_0}$, which is integrable on $[0,\pi/2]$ and the operator $T_{\alpha,r_0,3}$ is bounded.
Now, let $(r_n)_{n \in \N} \subset \param$ be a sequence such that $r_n \ra r_0$ in $\R^m$ as $n \ra \infty$.
Similar computations as before show that for any $z \in \ell^\infty(\cF)$ with $\|z\|_{\ell^\infty(\cF)} \leq 1$,
\begin{multline*}
	\| T_{\alpha,r_n,3}(z) - T_{\alpha,r_0,3}(z) \|_{\ell^\infty([0,\pi/2])} \\
	\leq \int_0^{\pi/2} \labs \varphi_{p,r_n}(u) - \varphi_{p,r_0}(u) \rabs \lp \|(1,\cot u)\|_p^{\gamma-(1+p)} + \|(1,\tan u)\|_p^{\gamma-(1+p)} \rp \d u
\end{multline*}
By point 2 of Assumption~\ref{ass:density_reg} and integrability of the map $\theta \in [0,\pi/2] \mapsto (\eta(\theta))^{-\nu}$ for any $\nu \in (0,1)$, the latter quantity converges to zero, showing that condition~\eqref{eq:operator_convergence} is verified for $T_{\alpha,r_0,3}$.

\paragraph*{Process $\boldsymbol{\beta_{p,r_0}}$.}
Let us now consider the linear operator $T_{\beta,r_0} : \ell^\infty([0,\pi/2]) \mapsto \ell^\infty([0,\pi/2])$ which is such that
\begin{equation}
\label{eq:beta_op}
	T_{\beta,r_0}(\alpha_{p,r_0}) = \beta_{p,r_0}.
\end{equation}
Our aim is to show that $T_{\beta,r_0}$ is bounded and satisfies~\eqref{eq:operator_convergence}.

Let $z \in \ell^\infty([0,\pi/2])$. The operator $T_{\beta,r_0}$ is defined for any $\theta \in [0,\pi/2]$ by
\begin{equation}
\label{eq:beta_op_point}
	\lp T_{\beta,r_0}(z) \rp (\theta)
	\egdef \frac{z(\theta) \Phi_{p,r_0}(\tfrac{\pi}{2}) - \Phi_{p,r_0}(\theta) z(\tfrac{\pi}{2})}{\Phi_{p,r_0}(\tfrac{\pi}{2})^2}.
\end{equation}
It is clear that if $\|z\|_{\ell^\infty([0,\pi/2])} \leq 1$, since the angular distribution function satisfies $1 \leq \Phi_{p,r_0}(\pi/2) \leq 2$ for any $r \in \param$,
\[
	\|T_{\beta,r_0}(z)\|_{\ell^\infty([0,\pi/2])} \leq 2,
\]
so that $T_{\beta,r_0}$ is bounded.
Now, let $(r_n)_{n \in \N} \subset \param$ be a sequence such that $r_n \ra r_0$ in $\R^m$ as $n \ra \infty$.
Similar computations show that if $\|z\|_{\ell^\infty([0,\pi/2])} \leq 1$,
\[
	\|T_{\beta,r_n}(z) - T_{\beta,r_0}(z) \|_{\ell^\infty([0,\pi/2])}
	\leq 6 \|\Phi_{p,r_n} - \Phi_{p,r_0}\|_{\ell^\infty([0,\pi/2])}.
\]
The latter quantity converges to zero as $n \ra \infty$ since $\Phi_{p,r_n} \ra \Phi_{p,r_0}$ in $\ell^\infty([0,\pi/2])$ if $r_n \ra r$ in $\R^m$, as explained in the proof of Lemma~\ref{lem:cont_gaussian_process}.
We deduce that $T_{\beta,r_0}$ satisfies~\eqref{eq:operator_convergence}.

\paragraph*{Process $\boldsymbol{\gamma_{p,r_0}}$.}
\bgroup
\color{black}
Let us finally consider the linear operator $T_{\gamma,r_0} : \ell^\infty([0,\pi/2]) \mapsto \ell^\infty([0,\pi/2])$ which is such that
\begin{equation}
\label{eq:gamma_op}
	T_{\gamma,r_0}(\beta_{p,r_0}) = \gamma_{p,r_0}.
\end{equation}
Our aim is to show that $T_{\gamma,r_0}$ is bounded and satisfies~\eqref{eq:operator_convergence}.

Let $z \in \ell^\infty([0,\pi/2])$. The operator $T_{\gamma,r_0}$ is defined for any $\theta \in [0,\pi/2]$ by
\begin{equation}
\label{eq:gamma_op_point}
	\lp T_{\gamma,r_0}(z) \rp (\theta)
	\egdef z(\theta) + \frac{\int_0^{\pi/2} z(\psi) f'(\psi) \d\psi }{\sigma_{Q_{p,r_0}}^2(f)} \int_0^\theta f(\psi) \d Q_{p,r_0}(\psi),
\end{equation}
where we recall that the functions $f$ and $f'$ are defined by
\begin{align*}
	f(\psi) &= \frac{\sin\psi - \cos\psi}{\|(\sin\psi, \cos\psi)\|_p} \qquad \text{and} \\
	f'(\psi) &= \frac{(\sin\psi)^{p-1} + (\cos\psi)^{p-1}}{\|(\sin\psi, \cos\psi)\|_p^{1+p}}
\end{align*}
satisfy the bounds $|f| \leq 1$ and $0 \leq f' \leq 2$.
It readily follows that if $\|z\|_{\ell^\infty([0,\pi/2])} \leq 1$, then
\[
	\|T_{\gamma,r_0}(z)\|_{\ell^\infty([0,\pi/2])} \leq 1 + \frac{\pi}{\sigma_{Q_{p,r_0}}^2(f)} < \infty,
\]
so $T_{\gamma,r_0}$ is a bounded operator. 
Now, let $(r_n)_{n \in \N} \subset \param$ be a sequence such that $r_n \ra r_0$ in $\R^m$ as $n \ra \infty$.
For $\|z\|_{\ell^\infty([0,\pi/2])} \leq 1$, we compute
\begin{align*}
\lefteqn{\|T_{\gamma,r_n}(z) - T_{\gamma,r_0}(z) \|_{\ell^\infty([0,\pi/2])}} \\
	&\leq \pi \left\| \frac{\int_0^\theta f(\psi) \d Q_{p,r_n}(\psi)}{\sigma_{Q_{p,r_n}}^2(f)} - \frac{\int_0^\theta f(\psi) \d Q_{p,r_0}(\psi)}{\sigma_{Q_{p,r_0}}^2(f)} \right\|_{\ell^\infty([0,\pi/2])}.
\end{align*}
Fubini's theorem leads to
\[
	\left\| \int_0^\theta f(\psi) \d Q_{p,r_n}(\psi) - \int_0^\theta f(\psi) \d Q_{p,r_0}(\psi) \right\|_{\ell^\infty([0,\pi/2])}
	\leq (1+\pi) \|Q_{p,r_n} - Q_{p,r_0} \|_{\ell^\infty([0,\pi/2])},
\]
and the latter quantity converges to zero as $n \ra 0$ since $Q_{p,r_n} \ra Q_{p,r_0}$ in $\ell^\infty([0,\pi/2])$. The same argument implies
\[
	\sigma_{Q_{p,r_n}}^2(f) \to \sigma_{Q_{p,r_0}}^2(f) > 0, \qquad \text{as } n \to \infty,
\]
and this suffices to verify~\eqref{eq:operator_convergence} for the operator $T_{\gamma,r_0}$.
\egroup 

\paragraph*{Process associated with the derivative.}
The last operator to be considered is the linear operator $T_{\nabla,r_0} : \ell^\infty(\cF) \ra \ell^\infty([0,\pi/2])$ which is such that for $\theta \in [0,\pi/2]$,
\begin{equation}
\label{eq:nabla_op}
	\lp T_{\nabla,r_0} \lp W_{\Lambda,r_0} \rp \rp (\theta) = \int_0^\theta \nabla_r \varrho_{p,r_0}(x) \d x \dotp I_{g,\sigma,r_0},
\end{equation}
where we recall that
\[
	I_{g,\sigma,r_0} = \int_{[0,1]^2} \Big( W_{\Lambda_{r_0}}(A_{(x_1,x_2)}) - \sum_{j=1}^2 \dot{\ell}_{r,j}(x_1,x_2) W_j(x_j) \Big) g(x_1,x_2) \d\sigma(x_1,x_2),
\]
for some finite Borel measure $\sigma$ on $[0,1]^2$ and a function $g : [0,1]^2 \ra \R^m$ in $\lp L_2([0,1]^2,\sigma) \rp^m$. The latter is naturally decomposed as $T_{\nabla,r_0} = \sum_{k=1}^{m} T_{\nabla,k,r_0}$ where the operators $T_{\nabla,k,r_0} : \ell^\infty(\cF) \ra \ell^\infty([0,\pi/2])$ are defined by
\[
	\lp T_{\nabla,K,r_0} \lp W_{\Lambda,r_0} \rp \rp (\theta) \egdef \lp I_{g,\sigma,r_0} \rp_k \int_0^\theta \partial_{r_k} \varrho_{p,r_0}(x) \d x, \qquad k \in \{1,\ldots,m\}.
\]
Fix $k \in \{1,\ldots,m\}$ and let $z \in \ell^\infty(\cF)$ be such that $\|z\|_{\ell^\infty(\cF)} \leq 1$. Using the fact that $\dot{\ell}_{r_0,j} \leq 1$ for any $j \in \{1,2\}$, we readily see that
\[
	\|  T_{\nabla,K,r_0}(z) \|_{\ell^\infty([0,\pi/2])}
	\leq 3 \int_{[0,1]^2} g_k(x_1,x_2) \d\sigma(x_1,x_2) \int_0^{\pi/2} \labs \partial_{r_k} \varrho_{p,r_0}(x) \rabs \d x < \infty,
\]
so that $T_{\nabla,K,r_0}$ is bounded since the partial derivative $\partial_{r_k} \varrho_{p,r_0}$ is assumed to be integrable on $[0,\pi/2]$ by point 1 of Assumption~\ref{ass:modelAndEstimator}. Let us now consider a sequence $(r_n)_{n \in \N} \subset \param$ which is such that $r_n \ra r_0$ in $\R^m$ as $n \ra \infty$. We compute
\begin{align*}
\lefteqn{\|T_{\nabla,k,r_n}(z) - T_{\nabla,k,r_0}(z)\|_{\ell^\infty([0,\pi/2])}} \\
	&\leq \int_0^{\pi/2} \labs \partial_{r_k} \varrho_{p,r_n}(x) \rabs \d x \sum_{j=1}^2 \int_{[0,1]^2} \labs \dot{\ell}_{r_n,j}(x_1,x_2) - \dot{\ell}_{r_0,j}(x_1,x_2) \rabs g(x_1,x_2) \d\sigma(x_1,x_2) \\
	&\qquad + 3\int_{[0,1]^2} g_k(x_1,x_2) \d\sigma(x_1,x_2) \int_0^{\pi/2} \labs \partial_{r_k} \varrho_{p,r_n}(x) - \partial_{r_k} \varrho_{p,r_0}(x) \rabs \d x.
\end{align*}
\bgroup
\color{black}
Using the fact that for any $r \in \param$
\begin{align*}
	\ell_r(x_1,x_2) &= x_1 + x_2 - \Lambda_r([0,x_1] \times [0,x_2]) \\
	&= x_1 + x_2 - \iint_{[0,x_1] \times [0,x_2]} \lambda_r(x,y) \d x \d y \\
	&=  x_1 + x_2 - \iint_{[0,x_1] \times [0,x_2]} \frac{xy}{x^2+y^2} \|(x,y)\|_p^{-1} \varphi_{p,r}(\arctan(y/x)) \d x \d y,
\end{align*}
where the last equality makes use of~\eqref{eq:densities_relation}. A change of variable then leads to
\begin{align*}
	\dot{\ell}_{r,1}(x_1,x_2)
	&= 1 - \int_0^{\arctan(\tfrac{x_2}{x_1})} \frac{\tan\theta}{(1+(\tan\theta)^p)^{1/p}} \varphi_{p,r}(\theta) \d\theta \quad \text{and} \\
	\dot{\ell}_{r,2}(x_1,x_2)
	&= 1 - \int_{\arctan(\tfrac{x_2}{x_1})}^{\pi/2} \frac{\tan(\tfrac{\pi}{2}-\theta)}{(1+(\tan(\tfrac{\pi}{2}-\theta)^p)^{1/p}} \varphi_{p,r}(\theta) \d\theta.
\end{align*}
Point 2 of Assumption~\ref{ass:density_reg} then leads to
\[
	\dot{\ell}_{r_n,j}(x_1,x_2) \to \dot{\ell}_{r_0,j}(x_1,x_2), \qquad \forall (x_1,x_2) \in [0,1]^2, \ j \in \{1,2\}
\]
as $n \to \infty$. The dominated convergence theorem permits to conclude that the first term in the upper bound converges to zero.
\egroup
Thanks to point 1 of Assumption~\ref{ass:modelAndEstimator} 
The second term also converges to zero by point 2 of Assumption~\ref{ass:density_reg}, showing that condition~\eqref{eq:operator_convergence} is satisfied for $T_{\nabla,r_0}$.

Combining all the above arguments permit to conclude the proof of Lemma~\ref{lem:cont_limit_law}.
\end{proof}

If $(r_n)_{n \in \N} \subset \param$ is a sequence such that $r_n \ra r_0$ in $\R^m$ as $n \ra \infty$ for some interior point $r_0 \in \param$, Lemma~\ref{lem:cont_limit_law} implies pointwise convergence of the cumulative distribution associated to $L_{r_n}$ to the one of $L_{r_0}$ which is continuous. Since it is also strictly increasing, we get the pointwise convergence $\quantile_{r_n}(1-\alpha) \ra \quantile_{r_0}(1-\alpha)$ for any $\alpha \in (0,1)$. The continuous mapping then permits to conclude that
\[
	\quantile_{\est}(1-\alpha) \xrightarrow{\PP} \quantile_{r_0}(1-\alpha), \qquad \alpha \in (0,1),
\]
for any consistent estimator $\est$ of $r_0$. The local uniformity follows from the continuity and monotonicity of the map $\alpha \in (0,1) \mapsto \quantile_{r_0}(1-\alpha)$.

\section{Sampling from the asymptotic distribution of the test statistic}
\label{sec:app:simus}

In this section, we briefly describe how to generate samples from the asymptotic distribution of $T_n$ in Theorem~\ref{thm:asymptoticTS}, as it could be of independent interest. The underlying principle is not new and has been used, e.g., in~\cite{einmahl2001} to draw confidence bounds for the empirical angular measure based on its asymptotic distribution. However, no detailed procedure could be found in the literature and our aim is to fill in this gap.

As in the proof of Lemma~\ref{lem:cont_limit_law} in Appendix~\ref{sec:app:cont_quantiles}, the key observation is that the random variable $L_{r_0}$ appearing as the limit in distribution of $T_n$ can be written as
\[
	L_{r_0} = \int_0^{\pi/2} \labs X_{r_0}(\theta) \rabs \, q(\theta) \, \d\theta,
\]
for some process $X_{r_0} \in \ell^\infty([0,\pi/2])$ which is a linear transformation of the Gaussian process $W_{\Lambda_{r_0}}$ introduced before Theorem~\ref{thm:asymptoticsEAM}. From now on, we will omit the subscript $r_0$ to alleviate the notation. The random variable $L$ will be approximated by the Riemann sum
\[
	L \approx \frac{1}{N} \sum_{i=1}^N |X(\theta_i)| \, q(\theta_i) 
\] 
for a large equidistant grid $0 < \theta_1 < \ldots < \theta_N < \pi/2$. Typically, we choose $N = 1\, 000$. Hence, to obtain a large number $B$ of simulated values for $L$, it suffices to generate $B$ trajectories of the process $X$ on the chosen grid. In Section~\ref{sec:simulations}, we choose $B=2\, 000$ while the $p$-value computations in Section~\ref{sec:application} are based on $B=4\, 000$ replications. We give an overview of the procedure below.

The first step is to obtain a realization of the underlying process $W_\Lambda$ by considering a sufficiently large rectangular grid of points $\{(x_i,y_j) : i,j = 1,\ldots,M\}$ on $\spacerv$ of the form
\begin{equation*}
	x_i = (i-1) h \qquad \text{and} \qquad y_j = (j-1) h,
\end{equation*}
for a small step size $h > 0$. Typically, we choose $h = 0.05$ and $M = 1000$. These points are used to define the cells
\begin{equation*}
	C_{ij} \egdef \lacc (x,y) \in \spacerv : x \in [x_i, x_{i+1}) \text{ and } y \in [y_j, y_{j+1}) \racc, \qquad i,j \in \{1, \ldots, M-1\},
\end{equation*}
which are considered as an approximate disjoint covering of the space $\spacerv$. The fact that the cells do not cover points $(x,y) \in \spacerv$ for which $x \vee y > (M-1) h$ is not supposed to affect the results too much as the variance of $W_\Lambda$ is negligible on this part of the space.

Associated with those cells, we generate a matrix $W = (W_{ij})_{i,j = 1, \ldots, M-1}$ of independent centered Gaussian random variables with $\Var(W_{ij}) = \Lambda(C_{ij})$. The property $\EE[W_\Lambda(C) W_\Lambda(C')] = 0$ for disjoint Borel sets $C,C' \subset \spacerv$ justifies the approximation
\begin{equation}
\label{eq:process_disc}
	\tW_\Lambda(C) \egdef \sum_{i=1}^{M-1} \sum_{j=1}^{M-1} W_{ij} \, \1\{(x_i,y_j) \in C\}
\end{equation}
for any Borel set $C \subset \spacerv$ to obtain a realization of the process $W_\Lambda$. 

Of particular interest are sets of the form $C = C_{p,\theta}$ for some $1 \leq p \leq \infty$ and $\theta \in [0,\pi/2]$, for which formula~\eqref{eq:process_disc} simplifies to
\begin{align*}
	\tW_\Lambda(C_{p,\theta}) 
	&= \sum_{i=1}^{M-1} \sum_{j=1}^{M-1} W_{ij} \, \1 \{y_j \leq \min(y_p(x_i), x_i\tan\theta)\} \\
	&= \sum_{i=1}^{M-1} \sum_{j=1}^{ \lc \tfrac{y_p(x_i)}{h} + 1 \rc \wedge (M-1)} W_{ij} \, \1 \lacc (j-1) \leq (i-1) \tan \theta \racc,
\end{align*}
where $[x]$ denotes the integer part of $x \in \R$. Similar approximations based on~\eqref{eq:process_disc} can be found for the marginal processes $W_1$ and $W_2$, leading to
\[
	\tW_1(x) = \sum_{i=1}^{\lc \frac{x}{h} + 1 \rc \wedge (M-1)} W_{i \bullet} \quad \text{and} \quad
	\tW_2(y) = \sum_{j=1}^{\lc \frac{y}{h} + 1 \rc \wedge (M-1)} W_{\bullet j},
\]
where $W_{i \bullet} \egdef \sum_{j=1}^{M-1} W_{ij}$ and $W_{\bullet j} \egdef \sum_{i=1}^{M-1} W_{ij}$ for $(i,j) \in \{1,\ldots, M-1\}^2$. For $C = A_{(x,y)}$ as introduced in Assumption~\ref{ass:stdf_expansion} and appearing in the formula for $X(\theta)$, we get
\[
	\tW_\Lambda \lp A_{(x,y)} \rp = \tW_1(x) + \tW_2(y) - \sum_{i=1}^{\lc \frac{x}{h} + 1 \rc \wedge (M-1)} \sum_{j=1}^{\lc \frac{y}{h} + 1 \rc \wedge (M-1)} W_{ij}.
\] 

As the random variable $X(\theta)$ consists of a linear transformation of the process $W_\Lambda$ evaluated on sets of the above forms, these formulas will lead to a convenient approximation for the random vector $\lp X(\theta_1), \ldots, X(\theta_N) \rp$ as scalar products between $W$ and weights, possibly depending on $\theta$, which can be computed independently of the generation of $W$, before the evaluation of the scalar products, which can be done in an efficient way using parallel computing. More details can be found on the GitHub repository of the first author \footnote{E-mail \href{mailto:stephane.lhaut@uclouvain.be}{stephane.lhaut@uclouvain.be} if the repository is not yet available at the time of reading.}.

\end{document}